\newcommand{\todo}[1]{{\color{magenta}{[#1]}}} 
\newcommand{\AP}[1]{{\color{orange}{#1}}} 
\newcommand{\h}{h} 
\newcommand{\p}{p} 
\newcommand{\g}{g} 
\newcommand{\Vg}{V_\g} 
\newcommand{\Vz}{V_0} 
\renewcommand{\u}{u}  
\newcommand{\uI}{\u_I} 
\newcommand{\vv}{v} 
\newcommand{\un}{\u_n} 
\newcommand{\vn}{\vv_n} 
\newcommand{\vnbar}{\overline{v}_n} 
\renewcommand{\a}{a} 
\newcommand{\an}{\a_n} 
\newcommand{\anE}{\an^\E} 
\newcommand{\E}{K} 
\newcommand{\T}{T} 
\newcommand{\F}{F} 
\newcommand{\Q}{Q} 
\newcommand{\aE}{\a^\E} 
\newcommand{\taun}{\mathcal T_n} 
\newcommand{\tautilden}{\widetilde{\mathcal T}_n} 
\newcommand{\e}{e} 
\newcommand{\hE}{\h_\E} 
\newcommand{\he}{\h_\e} 
\newcommand{\diam}{\text{diam}} 
\newcommand{\x}{\mathbf x} 
\newcommand{\xE}{\mathbf x_\E} 
\newcommand{\n}{\mathbf n} 
\renewcommand{\k}{k} 
\newcommand{\nc}{nc} 
\newcommand{\npDelta}{n_p^\Delta} 
\newcommand{\q}{q}  
\newcommand{\qp}{\q_\p}  
\newcommand{\qpmu}{\q_{\p-1}}  
\newcommand{\qpmue}{\q^\e_{\p-1}} 
\newcommand{\qpmd}{\q_{\p-2}}  
\newcommand{\qDeltap}{\q^\Delta_\p} 
\newcommand{\qtildeDeltap}{\widetilde\q^\Delta_\p} 
\newcommand{\Poincare}{Poincar\'{e}} 
\newcommand{\Vnkg}{V^{\Delta,\k}_{n,\g}} 
\newcommand{\Vnkgc}{V^{\k}_{n,\g}} 
\newcommand{\Vnpg}{V^{\Delta,\p}_{n,\g}} 
\newcommand{\Vnpgc}{V^{\p}_{n,\g}} 
\newcommand{\Vnpzc}{V^{\p}_{n,0}} 
\newcommand{\Vnpz}{V^{\Delta,\p}_{n,0}} 
\newcommand{\VDeltaE}{V^\Delta(\E)} 
\newcommand{\VE}{V(\E)} 
\newcommand{\Pie}{\Pi^{0,\e}_{\p-1}} 
\newcommand{\Piei}{\Pi^{0,\e_i}_{\p-1}} 
\newcommand{\PiEextended}{\Pi^{\nabla,\Delta,\E}_\p} 
\newcommand{\PiE}{\Pi^{\nabla,\E}_\p} 
\newcommand{\Pinabla}{\Pi^{\nabla}_\p} 
\newcommand{\PiF}{\Pi^{0,\F}_{\p-1}} 
\newcommand{\PiT}{\Pi^{0,\T}_{\p-1}} 
\newcommand{\Pipartial}{\Pi^{0,\partial \E}_{\p-1}} 
\newcommand{\dof}{\text{dof}} 
\newcommand{\NE}{N_\E} 
\newcommand{\card}{\text{card}} 
\newcommand{\SE}{S^\E} 
\newcommand{\deltan}{\delta_n} 
\newcommand{\Omegaext}{\Omega_{\textup{ext}}} 
\renewcommand{\L}{L} 
\newcommand{\dist}{\text{dist}} 
\newcommand{\pbold}{\mathbf p} 
\newcommand{\pboldE}{\pbold^{\mathcal E}} 
\newcommand{\psiI}{\psi_I} 
\newcommand{\boldalpha}{\boldsymbol \alpha} 
\newcommand{\boldPi}{\boldsymbol \Pi} 
\newcommand{\boldPiStar}{\boldsymbol \Pi_\ast} 
\newcommand{\ds}{\textup{d}s} 
\newcommand{\dx}{\textup{d}x} 
\newcommand{\vertiii}[1]{{\left\vert\kern-0.25ex\left\vert\kern-0.25ex\left\vert #1 \right\vert\kern-0.25ex\right\vert\kern-0.25ex\right\vert}}
\newtheorem{thm}{Theorem}[section]
\newtheorem{lem}[thm]{Lemma}
\newtheorem{prop}[thm]{Proposition}
\theoremstyle{definition}
\theoremstyle{remark}
\newtheorem{remark}{Remark}
\title{Non-conforming harmonic virtual element method: $\h$- and $\p$-versions}
\date{}
\author{
Lorenzo Mascotto\thanks{Faculty of Mathematics, University of Vienna,
   1090 Vienna, Austria (lorenzo.mascotto@univie.ac.at,
   ilaria.perugia@univie.ac.at, alex.pichler@univie.ac.at).},\ 
Ilaria Perugia\footnotemark[1],\ 
Alexander Pichler\footnotemark[1]}
\begin{document}
\maketitle

\begin{abstract}
We study 
the $\h$- and $\p$-versions 
of non-conforming harmonic virtual element methods (VEM) for the
approximation of 
the Dirichlet-Laplace problem on a 2D polygonal 
domain,
providing quasi-optimal error bounds. 
Harmonic VEM do not make use of internal degrees of freedom. 
This leads to a faster convergence, in terms of the number
of degrees of freedom, as compared to standard VEM.
Importantly, the technical tools used in our 
$\p$-analysis 
can be employed as well in the analysis
of more general
non-conforming finite element methods and VEM.
The theoretical results are validated in a series of numerical
  experiments. The $hp$-version of the method is numerically tested,
demonstrating exponential convergence with rate given by the square
root of the number of degrees of freedom.

\medskip\noindent
\textbf{AMS subject classification}: 65N30, 65N12, 65N15, 35J05, 31A05

\medskip\noindent
\textbf{Keywords}: Virtual element methods, non-conforming methods,
Laplace problem, approximation by harmonic functions, $hp$ error bounds, polytopal meshes
\end{abstract}

\section{Introduction} \label{section introduction}
In recent years, Galerkin methods based on polygonal/polyhedral meshes
have attracted
a lot of attention, owing to 
their flexibility in dealing with complex geometries \cite{Weisser_basic, BLM_MFD, abcd, cockburn_HDG, 	hpDEFEM_polygon, dipietroErn_hho, bridging_HHOandHDG}. 
In this paper, we focus on the virtual element method (VEM) introduced
in \cite{VEMvolley, hitchhikersguideVEM}. 
The main feature of VEM, in addition to the fact that they allow for
general polytopal meshes, is that they are based on
trial and test spaces that consist of solutions to local problems
mimicking the target one. These functions
are not known in a closed form, which is at 
the origin of the name ``virtual''.
Importantly, the construction of the method does not rely on an
explicit representation of the basis functions, but rather on the
explicit knowledge of degrees of freedom. This
allows to compute certain projection operators  from local VE spaces
into polynomial ones, which are instrumental in the definition of
proper bilinear forms.

Owing to its 
flexibility and simplicitity  
of the implementation,
despite its 
novelty, the basic VEM paradigm has 
already been extended to highly-regular \cite{BeiraoManzini_VEMarbitraryregularity} and non-conforming \cite{nonconformingVEMbasic, cangianimanzinisutton_VEMconformingandnonconforming}
approximating spaces, combined with 
domain decomposition techniques \cite{VEM_DD_basic}, adaptive mesh
refinement \cite{cangianigeorgulispryersutton_VEMaposteriori},
adapted to curved domains \cite{beiraorussovacca_curvedVEM}, and 
applied to a wide variety of problems; among them, we recall 
general second-order elliptic problems
\cite{bbmr_VEM_generalsecondorderelliptic}, 
eigenvalue problems \cite{VEMchileans, gardiniVacca},
Stokes problem
\cite{BLV_StokesVEMdivergencefree},
elasticity problem \cite{VEMelasticity},
Helmholtz problem \cite{Helmholtz-VEM},
Cahn-Hilliard equation \cite{absv_VEM_cahnhilliard}, 
discrete fracture network simulations \cite{Berrone-VEM}, and 
topology optimization~\cite{Topology-VEM}.


In this paper, we introduce and analyze 
\textit{non-conforming} harmonic VEM 
for the approximation of 
the Dirichlet-Laplace problem on 
polygonal domains. 
These methods can be seen as the intermediate conformity level between
the continuous harmonic VEM developed in
\cite{HarmonicVEM},
and the harmonic discontinuous Galerkin finite element method (DG-FEM) of 
\cite{hmps_harmonicpolynomialsapproximationandTrefftzhpdgFEM,li2006local,li2012negative}.
As typically done in non-conforming methods, instead of requiring
$C^0$-continuity of 
test and trial
functions 
over the entire physical domain,
one only imposes that
the moments, up to a certain order, of their jumps across two adjacent elements are zero.
We highlight that non-conforming VEM were introduced in \cite{nonconformingVEMbasic} for the approximation of the Poisson problem and were subsequently extended to the
approximation of general elliptic and Stokes problems in
\cite{cangianimanzinisutton_VEMconformingandnonconforming,
  CGM_nonconformingStokes}, respectively. Our method inherits the 
  structure of that of \cite{nonconformingVEMbasic}, but makes use of
  harmonic basis functions, which yield to faster convergence, when
  approximating harmonic solutions, as
compared to 
standard basis functions.

We are particularly interested in the investigation of the $\h$- and
$\p$-versions of these methods.
In the former version, convergence is achieved by fixing the dimension
of local spaces and refining the mesh, whereas, in the latter,
by fixing a single mesh and increasing the dimension of local
spaces. A combination of the two 
goes by 
the name of $\h\p$-version.
The literature regarding the $\p$- and $\h\p$-versions of
VEM is restricted to \cite{hpVEMbasic, hpVEMcorner, fetishVEM2D, fetishVEM3D, pVEMmultigrid}, in addition to the above-mentioned
work \cite{HarmonicVEM};
for the $\h\p$-version of DG-FEM and hybrid-high order methods on 
polytopal grids, see \cite{hpDEFEM_polygon} and \cite{hpHHO}
and the references therein.
We derive quasi-optimal error bounds in the broken $H^1$ norm and
  in the
  $L^2$ norm, which are explicit in terms of the mesh size and of the degree of accuracy of
the method.
Although not covered by our theoretical analysis,
we provide numerical evidence that, similarly as for the harmonic VEM and
harmonic DG-FEM \cite{HarmonicVEM,hmps_harmonicpolynomialsapproximationandTrefftzhpdgFEM},
the exponential convergence
of the $\h\p$-version of the non-conforming
harmonic VEM is faster than the one 
of standard FEM \cite{BabuGuo_hpFEM, SchwabpandhpFEM} and VEM
\cite{hpVEMbasic, hpVEMcorner}.

The tools that we
employ in the forthcoming $\p$-analysis of 
non-conforming
harmonic VEM can actually be employed as well in the
$\p$-analysis of 
non-conforming FEM and of 
non-conforming VEM.
For instance, 
our argument to trace back best approximation estimates by means of
non-conforming harmonic VE functions to 
best approximation estimates by means of discontinuous harmonic
polynomials (Proposition  \ref{theorem best approximation VE
  functions}) 
extends to the non-harmonic case (Proposition
\ref{theorem best error Manzini space}). This provides a useful tool
in order to develop a
$p$-analysis of the non-conforming VEM of
\cite{nonconformingVEMbasic}. 

We stress that, in the high-order case, the construction of
an explicit basis for
non-conforming harmonic VEM, as well as for
non-conforming standard VEM,
is much simpler than for 
non-conforming FEM, see for instance \cite{CiarletSauter2016}.

The design and analysis of the non-conforming harmonic VEM developed in this paper pave the way for the study of VEM for the Helmholtz problem in a truly Trefftz setting, alternative to the conforming plane wave VEM of~\cite{Helmholtz-VEM},
which was based on a partition of unity approach.
In fact, the non-conforming framework seems to be the most appropriate one in order to design virtual Helmholtz-Trefftz approximation spaces.
Such an extension has been investigated in the recent work~\cite{ncTVEM_Helmholtz}.

\medskip
The outline of this paper is as follows. In Section \ref{section continuous problem}, the model problem is formulated and the concept of regular polygonal decompositions needed for the analysis is introduced;
besides, we recall the definition of non-conforming Sobolev spaces subordinated to polygonal decompositions of the physical domain.
Section \ref{section VEM} is dedicated to the construction of the 2D
non-conforming harmonic VEM 
and to the analysis of its
$\h$- and $\p$-versions; further, a hint for the extension to the 3D case is given.
Next, in Section \ref{section numerical results}, numerical results
validating the theoretical convergence estimates 
are presented; 
a numerical investigation of
the full $\h\p$-version of the method is also provided.
Finally, details on the implementation of the method are given
in Appendix \ref{section appendix implementation details}.

\paragraph*{Notation}
We fix here once and for all the notation employed throughout the paper.
Given any domain $D \subseteq \mathbb{R}^d$, $d \in \mathbb N$, and $\ell \in \mathbb N$, we denote by $\mathbb P_\ell(D)$ and $\mathbb H_\ell(D)$ the spaces of polynomials and harmonic polynomials up to order $\ell$ over $D$, respectively;
moreover, we set $\mathbb P_{-1}(D) = \mathbb H_{-1}(D) = \emptyset$.

We use the standard notation for Sobolev spaces, norms, seminorms and inner products.
More precisely, we denote the Sobolev space of functions with square integrable weak derivatives up to order $s$ over $D$ by $H^s(D)$,
and the corresponding seminorms and norms by $|\cdot|_{s,D}$ and $\lVert \cdot \rVert_{s,D}$, respectively.
Sobolev spaces of non-integer order can be defined, for instance, by interpolation theory.
In addition, for bounded $D$, $H^{1/2}(\partial D)$ denotes the space of traces of $H^1(D)$ functions; $H^1_0(D)$ and $H^1_g(D)$ are the Sobolev spaces of $H^1$ functions with traces equal to zero
and equal to a given function $g\in H^{1/2}(\partial D)$,
respectively. Further, $(\cdot,\cdot)_{0,D}$ is the usual $L^2$ inner
product over~$D$. 

We employ the following multi-index
notation: for 
$\boldsymbol{\alpha}=(\alpha_1,\dots,\alpha_d)$,
\begin{align*}
	\boldsymbol x^{\boldsymbol{\alpha}} = x_1^{\alpha_1} x_2^{\alpha_2} \dots x_d^{\alpha_d}, \quad \quad \quad \partial^{\boldsymbol{\alpha}} = \partial_1^{\alpha_1} \partial_2^{\alpha_2} \dots \partial_d^{\alpha_d},
\end{align*}
with $|\boldsymbol{\alpha}|=\alpha_1+\dots+\alpha_d$, and where $\partial _\ell^\alpha$ denotes the $\alpha$-th partial derivative along direction $x_\ell$.

In the sequel, we also 
use the notation $a \lesssim b$ meaning that there exists a constant
$c>0$, independent of $\h$ and $\p$,
such that $a \leqslant c\, b$.
Finally, we use the notation $a \approx b$ in lieu of $a \lesssim b$ and $b \lesssim a$ simultaneously.

\section{Continuous problem, polygonal decompositions and functional setting} \label{section continuous problem}
Here, we want to set the target problem and some basic notation we need for the construction of the non-conforming harmonic virtual element method (VEM).
More precisely, the outline of the section is as follows. In Section \ref{subsection continuous problem}, we introduce the model problem, that is a Laplace problem on a polygonal domain.
Then, in Section \ref{subsection regular polygonal decomposition}, we define the concept of regular decompositions into polygons of the physical domain of the problem. Finally, in Section \ref{subsection non conforming Sobolev spaces}, we describe non-conforming Sobolev spaces over such polygonal decompositions.

\subsection{The continuous problem} \label{subsection continuous problem}
The target problem we aim to approximate is a Laplace problem over a polygonal domain $\Omega \subset \mathbb R^2$ with boundary $\partial \Omega$.
More precisely, given 
$\g \in  H^{1/2}(\partial \Omega)$, we look for a function $u$ solving
\begin{align} \label{Laplace problem strong formulation}
\left\{
\begin{alignedat}{2}
-\Delta \u & = 0 && \quad \text{in }\Omega \\
\u & = \g && \quad \text{on } \partial \Omega.
\end{alignedat}
\right.
\end{align}
The weak formulation of \eqref{Laplace problem strong formulation} reads
\begin{equation} \label{Laplace problem weak formulation}
\begin{cases}
\text{find } \u \in \Vg \text{ such that}\\
\a(\u,\vv) = 0\quad  \forall \vv \in \Vz,\\
\end{cases}
\end{equation}
where
\begin{equation} \label{basic notation weak formulation}
\a(\u,\vv) := (\nabla \u, \nabla \vv)_{0,\Omega},\quad \Vg := H^1_\g (\Omega),\quad \Vz := H^1_0(\Omega).
\end{equation}
Well-posedness of problem \eqref{Laplace problem weak formulation} follows from a lifting argument and the Lax-Milgram lemma.

\subsection{Regular polygonal decompositions} \label{subsection regular polygonal decomposition}
In this section, we introduce the concept of regular sequences of
polygonal decompositions of the domain $\Omega$, which will be needed in the forthcoming analysis of the method.

Let $\{\taun\}_{n\in \mathbb N}$ be a sequence of \emph{conforming} polygonal decompositions of $\Omega$;
by conforming, we mean that, for each $n\in \mathbb N$, every internal
edge $\e$ of $\taun$ is contained in the boundary of precisely two elements in the decomposition.
This automatically includes the possibility of dealing with hanging nodes. 

For all $n\in \mathbb N$, with each $\taun$, we associate $\mathcal E_n$, $\mathcal E_n^I$ and $\mathcal E_n^B$, which denote its set of edges, internal edges and boundary edges, respectively.
Moreover, with each element $\E$ of $\taun$, we associate $\mathcal E^\E$, the set of its edges.
Finally, we set for all $\E \in \taun$ and for all $n \in \mathbb N$,
\begin{equation*} 
\hE := \diam(\E), \; \quad h:= \max_{\E \in \taun} \hE, \quad \he := \text{length}(\e), \, \forall \e \in \mathcal E^\E,
\end{equation*}
and we denote by $\xE$ the centroid of $\E$.

We say that $\{\taun\}_{n\in \mathbb N}$ is a {\em regular sequence of
polygonal decompositions} if the following assumptions are satisfied:
\begin{itemize}
\item[(\textbf{D1})] there exists a positive constant
  $\rho_1$ such that, for all $n\in \mathbb N$ and for all $\E \in
  \taun$, $\he \ge \rho_1 \hE$ for all edges $\e$ of $\E$;
\item[(\textbf{D2})] there exists a positive constant
  $\rho_2$ such that, for all $n\in \mathbb N$ and for all $\E \in
  \taun$,  
$\E$ is star-shaped with respect to a ball of radius greater than or equal to $\rho_2 \hE$.
\end{itemize}
The assumptions (\textbf{D1}) and (\textbf{D2}) imply the following property:
\begin{itemize}
\item[(\textbf{D3})] there exists a constant $\Lambda \in \mathbb N$ such that,
for all $n\in \mathbb N$ and for all $\E \in \taun$, card($\mathcal
E^\E$)$\le \Lambda$, that is, 
the number of edges of 
each element is uniformly bounded.
\end{itemize}
We point out that, in this definition, 
we are not requiring any quasi-uniformity on the size of the elements.
A discussion of VEM under more general mesh assumptions is the topic of \cite{beiraolovadinarusso_stabilityVEM, BrennerGuanSung_someestimatesVEM}.

\begin{remark}
In the forthcoming analysis, we will employ a number of standard functional inequalities (such as the Poincar\'e inequality and trace inequalities).
It can be proven that the constants appearing in such inequalities depend solely on the parameters~$\rho_1$, $\rho_2$, and~$\Lambda$ introduced in (\textbf{D1})-(\textbf{D3)}. We will omit such a dependence, for ease of notation.
\end{remark}

For future use, we also define local bilinear forms on polygons $\E \in \taun$ as
\begin{equation} \label{local bilinear form on polygon}
\aE(\u,\vv) := (\nabla \u, \nabla \vv)_{0,\E} \quad \forall \u,\,\vv \in H^1(\E).
\end{equation}

\subsection{Non-conforming Sobolev spaces} \label{subsection non conforming Sobolev spaces}
Having introduced the concept of regular sequences of meshes, we pinpoint the concept of
sequences of broken and non-conforming Sobolev spaces, along with their norms.
For all $n\in \mathbb N$ and $s>0$, we define the broken Sobolev spaces on $\taun$ as
\begin{equation*} 
H^{s}(\taun) := \{\vv \in L^2(\Omega) \mid \vv_{|_\E} \in H^s(\E) \ \forall \E \in \taun\}
\end{equation*}
and the corresponding broken seminorms and norms
\begin{equation} \label{broken s Sobolev}
\vert \vv \vert^2_{s,\taun} := \sum_{\E \in \taun} \vert \vv \vert_{s,\E}^2, \quad \quad \quad \Vert \vv \Vert^2_{s,\taun} := \sum_{\E \in \taun} \Vert \vv \Vert_{s,\E}^2.
\end{equation}
Particular emphasis is stressed on the broken $H^1$ bilinear form 
\begin{equation*} 
(\u,\vv) _{1,\taun} := \sum_{\E \in \taun} (\nabla \u,\nabla \vv)_{0,\E}.
\end{equation*}
In order to define non-conforming Sobolev spaces associated with polygonal decompositions, we need to fix some additional notation.
In particular, given any internal edge $\e \in \mathcal E_n^I$ shared
by the polygons $\E^-$ and $\E^+$ in $\taun$, we
denote by $\n_{\E^\pm}^e$ 
the two outer normal unit vectors with respect
to $\E^{\pm}$. For simplicity, we will later only write
$\n_{\E^\pm}$ instead of $\n_{\E^\pm}^e$.
Moreover, for boundary edges $\e \in \mathcal
E_n^B$, we introduce the normal unit vector $\n_\Omega$ 
pointing outside $\Omega$. Having this, 
for any $\vv \in H^1(\taun)$, we set the jump operator across an edge $\e \in \mathcal E_n$ to
\begin{equation} \label{jump operator}
\llbracket \vv \rrbracket :=
\begin{cases}
\vv_{|_{\E^+}} \n_{\E^+} + \vv_{|_{\E^-}} \n_{\E^-}& \text{if } \e \in \mathcal E^I_n\\
\vv \n_\Omega & \text{if } \e \in \mathcal E^B_n.\\
\end{cases}
\end{equation}

Finally, we introduce the global non-conforming Sobolev space of order
$\k\in \mathbb N$ with respect to the decomposition $\taun$ 
incorporating 
boundary conditions in a \emph{non-conforming sense}:
%
%
%
%
Given $\g \in H^{1/2}(\partial \Omega)$ and $\k \in \mathbb N$, we define
\begin{equation} \label{non conforming space non homogeneous}
\begin{split}
H_{\g}^{1,\nc} (\taun, \k) := \{ \vv \in H^1(\taun) \, \mid \, & \int_\e \llbracket \vv \rrbracket \cdot \n \, \q_{\k-1} \, \ds = 0  \quad \forall \q_{\k-1} \in \mathbb{P}_{k-1}(e), \ \forall \e \in \mathcal E_n^I\\
& \int_\e \llbracket \vv \rrbracket \cdot \n \, \q_{\k-1} \, \ds = \int_\e \g \q_{\k-1} \, \ds \quad \forall \q_{\k-1} \in \mathbb{P}_{k-1}(e), \ \forall \e \in \mathcal E_n^B \},\\
\end{split}
\end{equation}
where $\n$ is either of the two normal unit vectors to $e$, but fixed,
if $e \in \mathcal{E}_n^I$, and $\n=\n_\Omega$, if $e \in
\mathcal{E}_n^B$.
In the homogeneous case, definition~\eqref{non conforming space
  non homogeneous} becomes
\begin{equation} \label{non conforming space homogeneous}
\begin{split}
H^{1,\nc}_0(\taun, \k) := \{\vv \in H^1(\taun) \, \mid \, \int_\e \llbracket \vv \rrbracket\cdot \n\, \q_{\k-1} \, \ds= 0 \quad \forall \q_{\k-1} \in \mathbb{P}_{k-1}(e), \ \forall \e \in \mathcal E_n \}.\\
\end{split}
\end{equation}
Importantly, the seminorm $\vert \cdot \vert_{1,\taun}$ is actually a
norm for functions in $H^{1,\nc}_0 (\taun,
k)$. In~\cite{brenner2003poincare}, the validity of the following
\Poincare\, inequality was proven: there exists a positive constant
$c_P$ only depending on $\Omega$ such that, for all $k\in \mathbb N$,
\begin{equation} \label{Poincare Brenner}
\Vert \vv \Vert_{0,\Omega} \le c_P \vert \vv \vert_{1,\taun} \quad \forall \vv \in H^{1,\nc}_0(\taun, \k).
\end{equation}

\section{Non-conforming harmonic virtual element methods} \label{section VEM}

In this section, we introduce a non-conforming harmonic virtual element method for the approximation of problem \eqref{Laplace problem weak formulation} and investigate its $\h$- and $\p$-versions. To this purpose, 
in addition to (\textbf{D1})-(\textbf{D3}), we will also require on the sequence of meshes $\{\taun\}_{n\in \mathbb N}$
the following quasi-uniformity assumption: 
\begin{itemize}
\item[(\textbf{D4})] there exists a constant
$\rho_3 \ge 1$ such that, for all $n \in \mathbb N$ and for all $\E_1$ and $\E_2$ in $\taun$, 
it holds $\h_{\E_2} \le \rho_3 \h_{\E_1}$.
\end{itemize}

We want to approximate problem \eqref{Laplace problem weak formulation} with a method of the following type:
\begin{equation} \label{VEM}
\begin{cases}
\text{find } \un \in \Vnpg \text{ such that}\\
\an (\un, \vn) = 0 \quad \forall \vn \in \Vnpz,\\
\end{cases}
\end{equation} 
where the space of trial functions $\Vnpg$ and the space of test
functions $\Vnpz$ are finite dimensional (non-conforming) spaces
on a mesh $\taun$, ``mimicking'' the infinite dimensional spaces $\Vg$ and $\Vz$, defined in \eqref{basic notation weak formulation},
respectively.
Moreover, $\an(\cdot,\cdot): \Vnpg \times \Vnpz \rightarrow \mathbb R$ is a \emph{computable} discrete bilinear form mimicking its continuous counterpart defined again in \eqref{basic notation weak formulation}.
Such approximation spaces and discrete bilinear forms have to be tailored so that method \eqref{VEM} is well-posed and provides ``good'' $\h$- and $\p$-approximation estimates.

The outline of this section is as follows.
We first introduce suitable global approximation spaces $\Vnpg$ and $\Vnpz$ in Section \ref{subsection non conforming HVE Spaces}, highlighting their approximation properties in Section \ref{subsection approximation HVEM}.
Next, in Section \ref{subsection discrete bilinear forms}, we define
and provide an \emph{explicit} discrete bilinear form and, moreover, we discuss its properties.
An abstract error analysis is carried out in Section \ref{subsection
  abstract error analysis}; such analysis is instrumental for the
$\h$- and $\p$-error estimates proved in Section \ref{subsection h and p non conforming HVEM}.
$L^2$ error bounds are provided in Section \ref{subsection L2 error estimate}.
Finally, in Section \ref{subsection extension 3D}, we give a hint
concerning the extension to the 3D case and we stress the main
differences between the 2D and 3D cases. 
%
Some details on the implementation of the method are presented in Appendix~\ref{section appendix implementation details}.

\subsection{Non-conforming harmonic virtual element spaces} \label{subsection non conforming HVE Spaces}
The aim of the present section is to introduce non-conforming harmonic virtual element spaces with \emph{uniform} degree of accuracy.
To this purpose, we begin with the description of local harmonic VE spaces, modifying those in \cite{HarmonicVEM}
into a new setting suited for building global non-conforming spaces.

Let $\p\in \mathbb N$ be a given parameter. For all $n\in \mathbb N$ and for all $\E \in \taun$, we set
\begin{equation} \label{local VE space}
\VDeltaE := \{\vn \in H^1(\E) \mid \Delta \vn = 0 \text{ in } \E,\,
(\nabla \vn\cdot\n_K)_{|_e}
\in \mathbb P_{\p-1}(\e) \ \forall \e \in \mathcal E^K \}.
\end{equation}
In words, $\VDeltaE$ 
consists of \emph{harmonic} functions with
piecewise (discontinuous) polynomial normal traces on the boundary of
$\E$.

The space $\VDeltaE$ has dimension $\NE\p$, $\NE$ being the number of edges of $\E$.
A set of $\NE\p$ degrees of freedom for $\VDeltaE$ is the following. Given $\vn \in \VDeltaE$,
\begin{equation} \label{local dofs}
\frac{1}{h_e} \int_\e \vn m_{r}^\e \, \ds \quad \forall r=0,\dots, \p-1, \, \forall \e \in \mathcal E^K ,
\end{equation}
where 
$\{m_r^\e\}_{r=0,\ldots,\p-1}$
is \emph{any} basis of
$\mathbb P_{\p-1}(\e)$. 
These degrees of freedom are in fact unisolvent since,
if $\vn \in \VDeltaE$ 
has all the degrees of freedom equal to $0$, then
\begin{equation*} 
|\nabla \vn|_{1,K}^2 = \int_\E (\underbrace{-\Delta \vn}_{=0}) \, \vn
\, \dx + \int_{\partial \E} 
(\nabla \vn\cdot\n_K)
\, \vn \, \ds = \sum_{\e \in \mathcal E^\E} \int_\e \underbrace{
(\nabla \vn\cdot\n_K)
}_{\in \mathbb P_{\p-1}(\e)} \vn \, \ds = 0,
\end{equation*}
which implies that $\vn$ is constant. This, in addition to
\begin{equation*}
h_e \vn = \int_\e  \vn \, \ds =  \int_\e 1\, \vn \, \ds =0 ,
\end{equation*}
for some edge $\e\in \mathcal E^K$, implies $v_n=0$, providing unisolvence.

We denote by $\{\varphi_{j,r}\}_{j=1\ldots,\NE \atop r=0,\ldots, \p-1}$
the local canonical basis associated with the set of degrees of freedom~\eqref{local dofs}, namely
\begin{equation} \label{definition canonical basis}
\dof_{i,s}(\varphi_{j,r}) = \begin{cases}
1 & \text{if } i=j \ \text{and} \ s=r\\
0 & \text{otherwise}\\
\end{cases}\quad \forall\, i,j =1,\dots,\NE,\, \forall\, s,r=0,\dots,\p-1.
\end{equation}
We underline that the indices $i$ and $j$ refer to the edge, whereas the indices $s$ and $r$ refer to the polynomial $m_{r}^\e$ employed in the 
definition of the local degrees of freedom \eqref{local dofs}.

It is worth to note that the local canonical basis consists of functions that are not explicitly known inside the element and even their polynomial normal traces over the boundary are unknown.

By employing the degrees of freedom defined in \eqref{local dofs}, it
is possible to compute the following two projectors. The first one is
the edge $L^2$ projector onto the space of polynomials of degree
$\p-1$ 
\begin{equation} \label{L2 edge projector}
\begin{split}
\Pie: \, &\, \VDeltaE|_\e \rightarrow \mathbb P_{\p-1}(\e),  \\
&\int_\e (\vn - \Pie \vn)\qpmue \ \ds= 0\quad \forall \vn \in \VDeltaE, \, \forall \qpmue \in \mathbb P_{\p-1}(e). 
\end{split}
\end{equation}
The second one is the bulk $H^1$ projector onto the space of harmonic polynomials of degree $\p$
\begin{equation} \label{H1 bulk projector}
\begin{split}
\PiEextended = \PiE : \, &\, \VDeltaE \rightarrow \mathbb H_\p(\E),\\ 
&\int_\E \nabla(\vn - \PiE\vn) \cdot \nabla \qDeltap \, \dx = 0\quad \forall \vn \in \VDeltaE,\, \forall \qDeltap \in \mathbb H_\p(\E),   \\
&\int_{\partial \E} (\vn - \PiE\vn) \, \ds = 0\quad \forall \vn \in \VDeltaE, 
\end{split}
\end{equation}
where the last condition is imposed in order to define the projector in a unique way.

We are ready to define global non-conforming harmonic VE spaces, which
incorporate Dirichlet boundary conditions in a ``non-conforming sense''.
Let $\p \in \mathbb N$ 
be a given parameter.
Then, for any $\g \in H^{1/2}(\partial \Omega)$, we set
\begin{equation} \label{global non conforming harmonic VES}
\Vnpg := \{\vn \in H^{1,\nc}_{\g} (\taun,p) \, \mid \, \vv_{n|_\E} \in \VDeltaE \ \forall K \in \taun \}.
\end{equation}

We observe the following facts:
\begin{itemize}
\item Definition \eqref{global non conforming harmonic VES} includes the space of test functions $\Vnpz$, by selecting $\g=0$.
\item The parameter $p$ in~\eqref{global non conforming harmonic VES} indicates the level of non-conformity of the method.
The fact that the non-conformity is defined with respect to Dirichlet traces allows us to easily couple the local degrees of freedom into a global set, 
provided that we choose the same value $p$ for the non-conformity parameter and for the polynomial degree entering
definition~\eqref{local VE space} of the local spaces. The resulting global set of degrees of freedom is of dimension $\card(\mathcal E_n)\p$.
\item Dirichlet boundary conditions on $\partial \Omega$ are imposed weakly via the definition of the non-conforming spaces \eqref{non conforming space non homogeneous} and \eqref{non conforming space homogeneous}.
For instance, 
given a Dirichlet datum 
$\g$, on all boundary edges $\e \in \mathcal E_n^B$, we set
\[
\int_\e \llbracket \vn \rrbracket \cdot \n_\Omega \, \qpmue \, \ds = \int_\e \vn \qpmue \, \ds= \int_\e \g \qpmue \, \ds \quad \forall \vn \in \Vnpg,\ \forall \qpmue \in \mathbb P_{\p-1}(\e).
\]
\end{itemize}

\begin{remark} \label{remark how to deal with Dirichlet boundary conditions}
We highlight that, at the discrete level, one should also take into account the approximation of the Dirichlet boundary condition $\g$.
In practice, assuming $\g\in H^{\frac{1}{2} + \varepsilon}(\partial \Omega)$, for any $\varepsilon >0$ arbitrarily small, and
denoting by $g_\p$ the approximation of $\g$ obtained by interpolating $\g$ at the $\p+1$ Gau\ss-Lobatto nodes on each edge in $\mathcal E_n^B$, one should define the trial space as
\begin{equation*} 
\Vnpg := \{\vn \in H^{1,\nc}_{\g_\p} (\taun,p) \, \mid \, \vv_{n_{|_\E}} \in \VDeltaE \ \forall K \in \taun \}.
\end{equation*}
With this definition, in the forthcoming analysis (see Proposition \ref{theorem best approximation VE functions}, Theorem \ref{theorem abstract error analysis}, Theorem \ref{theorem h and p VEM},
Proposition \ref{theorem best error Manzini space}, and Theorem \ref{theorem L2 estimates} below),
an additional term related to 
the approximation of the Dirichlet datum via Gau\ss-Lobatto interpolants should be taken into account.
However, following \cite[Theorem 4.2, Theorem 4.5]{bernardimaday1992polynomialinterpolationinsobolev}, it is possible to show that the $\h$- and $\p$-rates
of convergence of the method are not spoilt by this term.
For this reason and for the sake of simplicity, we will neglect 
in the following the presence of this term and assume 
that the approximation space is the one defined in \eqref{global non conforming harmonic VES}.
\end{remark}

\subsection{Approximation properties of functions in non-conforming harmonic virtual element spaces} \label{subsection approximation HVEM}
In this section, we deal with approximation properties of functions in the non-conforming harmonic VE spaces $\Vnpg$ and $\Vnpz$.

Since
$\h$- and $\p$-approximation properties of harmonic functions via
harmonic polynomials are 
known,
see e.g. \cite{babumelenk_harmonicpolynomials_approx, hmps_harmonicpolynomialsapproximationandTrefftzhpdgFEM},
we want to relate best approximation estimates in the non-conforming harmonic VE spaces to the corresponding ones in {\em discontinuous} harmonic polynomial spaces.
In particular, we prove the following result.

\begin{prop} \label{theorem best approximation VE functions}
Given $\g \in H^{1/2}(\partial \Omega)$, let $\u \in \Vg$, where $\Vg$ is defined in \eqref{basic notation weak formulation}.
For any polygonal partition $\taun$ of $\Omega$, there exists $\uI \in \Vnpg$, with $\Vnpg$ introduced in \eqref{global non conforming harmonic VES}, such that
\begin{align*}
\vert \u - \uI \vert_{1,\taun} \le 2 \vert \u - \qDeltap \vert_{1,\taun}\quad \forall \qDeltap \in \mathcal{S}^{p,\Delta,-1}(\taun),
\end{align*}
where $\mathcal{S}^{p,\Delta,-1}(\taun)$ is the space of discontinuous piecewise harmonic polynomials of degree at most $p$, that is,
\begin{align} \label{space SpDelta-1}
\mathcal{S}^{p,\Delta,-1}(\taun):= \{ q \in L^2(\Omega): \, q_{|_\E} \in \mathbb H_\p(\E) \ \forall \E \in \taun \}.
\end{align}
\end{prop}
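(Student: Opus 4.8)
The plan is to construct $\uI$ as the natural interpolant of $\u$ determined by the degrees of freedom~\eqref{local dofs}, and then to exploit a variational characterization of this interpolant so that the factor $2$ emerges from a single triangle inequality. Concretely, on each element $\E \in \taun$ I would define $\uI$ on $\E$ to be the unique function in $\VDeltaE$ whose degrees of freedom coincide with those of $\u$, that is $\frac{1}{\he}\int_\e \uI\, m_r^\e \, \ds = \frac{1}{\he}\int_\e \u\, m_r^\e \, \ds$ for all $r = 0,\dots,\p-1$ and all $\e \in \mathcal E^\E$. This is well posed by the unisolvence established after~\eqref{definition canonical basis}, and the edge moments of $\u$ are meaningful since the trace of $\u \in \Vg \subset H^1(\Omega)$ belongs to $L^2(\e)$.

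First I would verify that $\uI \in \Vnpg$. Since $\u$ is single-valued on each internal edge $\e$ shared by $\E^+$ and $\E^-$, the degrees of freedom of $\uI$ computed from the two sides coincide; as $\{m_r^\e\}_{r}$ is a basis of $\mathbb P_{\p-1}(\e)$, this gives $\int_\e \llbracket \uI \rrbracket \cdot \n\, \qpmue \, \ds = 0$ for all $\qpmue \in \mathbb P_{\p-1}(\e)$, and the analogous computation on boundary edges produces the required matching with $\g$. Together with $\uI \in \VDeltaE$ on each element, this shows $\uI \in H^{1,\nc}_\g(\taun,\p)$, hence $\uI \in \Vnpg$.

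The crucial step, which I expect to be the main obstacle, is the elementwise $H^1$-seminorm contraction: if $w_I \in \VDeltaE$ denotes the interpolant on $\E$ of a function $w \in H^1(\E)$, then $|w_I|_{1,\E} \le |w|_{1,\E}$. I would establish this by recognizing $w_I$ as the minimizer of $v \mapsto |v|_{1,\E}$ among all $v \in H^1(\E)$ having the same edge moments as $w$: the Euler--Lagrange conditions of this constrained minimization are exactly $\Delta v = 0$ in $\E$ together with $(\nabla v \cdot \n_\E)_{|_\e} \in \mathbb P_{\p-1}(\e)$ on every edge, which by unisolvence characterize the unique element of $\VDeltaE$ carrying the prescribed edge moments, namely $w_I$. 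As the functional is convex, $w_I$ is the global minimizer, and since $w$ is itself admissible, the contraction $|w_I|_{1,\E} \le |w|_{1,\E}$ follows immediately; note that the $r=0$ moment constraint pins down the additive constant, so the minimizer is unique.

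Finally I would assemble the bound. Because $\mathbb H_\p(\E) \subset \VDeltaE$ and the degrees of freedom are unisolvent, the interpolant reproduces harmonic polynomials elementwise, so for $w := \u - \qDeltap$ one has $\uI - \qDeltap = w_I$ on each $\E$. Summing the elementwise contraction over $\taun$ yields $|\uI - \qDeltap|_{1,\taun} \le |\u - \qDeltap|_{1,\taun}$, and then
\begin{align*}
|\u - \uI|_{1,\taun} \le |\u - \qDeltap|_{1,\taun} + |\qDeltap - \uI|_{1,\taun} \le 2\, |\u - \qDeltap|_{1,\taun}
\end{align*}
holds for every $\qDeltap \in \mathcal S^{\p,\Delta,-1}(\taun)$, which is the claim. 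No mesh-regularity assumption is needed, consistently with the statement being valid for an arbitrary polygonal partition.
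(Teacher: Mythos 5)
Your proposal is correct, and its skeleton (the moment-matching interpolant $\uI$, the triangle inequality, and the key contraction $\vert \uI - \qDeltap \vert_{1,\taun} \le \vert \u - \qDeltap \vert_{1,\taun}$) coincides with the paper's; what differs is how you justify the contraction. The paper proves it by a direct computation: integrate by parts on each element, use that $\uI - \qDeltap$ is harmonic and that $\nabla(\uI - \qDeltap)\cdot\n_\E$ is edgewise in $\mathbb P_{\p-1}(\e)$ to swap $\uI$ for $\u$ in the boundary term via the moment matching, integrate back, and finish with Cauchy--Schwarz. You instead invoke the Dirichlet principle: the interpolant is the unique minimizer of the $H^1$ seminorm among all $H^1(\E)$ functions with the prescribed edge moments, so it contracts the seminorm relative to the admissible competitor $w = \u - \qDeltap$; combined with linearity and reproduction of $\mathbb H_\p(\E) \subset \VDeltaE$, this yields the same bound. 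The two mechanisms are equivalent (the paper's identity is precisely the first-order orthogonality condition of your minimization problem), but your framing isolates a reusable structural fact---elementwise seminorm minimality of the interpolation operator---whereas the paper's is more self-contained and computational. One caveat in your write-up: you assert that the Euler--Lagrange conditions are \emph{exactly} $\Delta v = 0$ and edgewise polynomial normal derivative; the necessity direction of that claim is delicate, since for a harmonic $H^1(\E)$ function the normal trace lives a priori only in $H^{-1/2}(\partial\E)$, and extracting edgewise polynomiality from the variational condition requires a duality argument. Your proof does not actually need necessity: it suffices to check that $w_I \in \VDeltaE$ with the prescribed moments \emph{satisfies} the first-order condition $(\nabla w_I, \nabla z)_{0,\E} = 0$ for all $z$ with vanishing edge moments (an integration by parts, exactly as in the paper), and then convexity makes it the global minimizer. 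Phrased that way, your argument is fully rigorous.
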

\begin{proof}
Define $\uI \in \Vnpg$ by 
\begin{equation} \label{virtual interpolant}
\int_\e (\u - \uI) \qpmue \, \ds= 0\quad \forall \qpmue \in \mathbb
P_{\p-1}(\e),\ 
\forall \e \in \mathcal E_n,
\end{equation}
that is, we fix the degrees of freedom \eqref{local dofs} of $\uI$ to
be equal to the values of the same functionals applied to the solution $\u$.
Having this, it holds
\begin{equation} \label{triangulating with piecewise harmonic}
\vert \u - \uI \vert_{1,\taun} \le \vert \u - \qDeltap \vert_{1,\taun} + \vert \uI - \qDeltap \vert_{1,\taun} \quad \forall \qDeltap \in \mathcal{S}^{p,\Delta,-1}(\taun),
\end{equation}
where $\mathcal{S}^{p,\Delta,-1}(\taun)$ is defined in \eqref{space SpDelta-1}.
We focus on the second term on the right-hand side of \eqref{triangulating with piecewise harmonic}. By integrating by parts and using \eqref{virtual interpolant}, together with the definition of the space \eqref{global non conforming harmonic VES}, we get
\begin{equation}\label{bound1}
\begin{split}
\vert \uI - \qDeltap \vert^2_{1,\taun} &= \sum_{\E \in \taun} \vert \uI - \qDeltap \vert^2_{1,\E} \\
& = \sum_{\E \in \taun} \left\{ \int_\E (\uI - \qDeltap)
  (\underbrace{-\Delta(\uI - \qDeltap)}_{=0}) \, \dx + \sum_{\e \in
    \mathcal E^\E}\int_\e (\uI-\qDeltap) \, 
\nabla(\uI-\qDeltap)\cdot\n_K
\, \ds  \right\}\\
& = \sum_{\E \in \taun} \sum_{\e \in \mathcal E^\E} \int_\e (\u -
\qDeltap) \, 
\nabla(\uI-\qDeltap)\cdot\n_K
 \, \ds.
\end{split}
\end{equation}
By expanding the right-hand side of~\eqref{bound1} and using
the Cauchy-Schwarz inequality, we obtain
\[
\begin{split}
\vert \uI - \qDeltap \vert_{1,\taun}^2	& = \sum_{\E \in \taun} \int_{\E} \nabla (\u-\qDeltap) \cdot \nabla (\uI - \qDeltap) \, \dx + \int_\E (\u-\qDeltap) \underbrace{\Delta(\uI-\qDeltap)}_{=0} \, \dx \\
& \le \vert \u - \qDeltap \vert_{1,\taun} \vert \uI - \qDeltap \vert_{1,\taun}.
\end{split}
\]
Inserting this into \eqref{triangulating with piecewise harmonic} gives the result.
\end{proof}

We remark that,
with a similar proof of that of Proposition~\ref{theorem best approximation VE functions}, one can prove an equivalent result for the non-conforming (non-harmonic) 
VE spaces of~\cite{nonconformingVEMbasic}; 
see Proposition~\ref{theorem best error Manzini space} below.

\subsection{Discrete bilinear forms} \label{subsection discrete
  bilinear forms}

In this section, we complete the definition of the method~\eqref{VEM}
by introducing a suitable bilinear form $\an(\cdot,\cdot)$, which is
explicitly computable.
We follow here the typical VEM gospel \cite{VEMvolley, hpVEMcorner, HarmonicVEM}. 
It is important to highlight that the local bilinear forms $\aE(\cdot,\cdot)$ defined in \eqref{local bilinear form on polygon} are not explicitly computable on the whole discrete spaces since an explicit representation of functions in the harmonic VE spaces is not available
in closed form.

Therefore, we aim at introducing explicit computable discrete bilinear forms $\anE(\cdot,\cdot)$ which mimic their continuous counterparts $\aE(\cdot,\cdot)$.
To this purpose, we observe that the Pythagorean
theorem yields
\begin{equation} \label{Pythagoras theorem}
\aE(\un,\vn) = \aE( \PiE \un, \PiE \vn) + \aE( (I-\PiE) \un, (I-\PiE) \vn) \quad \forall \un,\,\vn \in \VDeltaE,
\end{equation}
where we recall that $\PiE$ is defined in \eqref{H1 bulk projector}.
The first term on the right-hand side of \eqref{Pythagoras theorem} is computable, whereas the second is not. Thus, following \cite{HarmonicVEM} and the references therein,
we replace this term by a \emph{computable} symmetric bilinear form $\SE:\ker(\PiE)\times \ker (\PiE)\rightarrow \mathbb R$, such that
\begin{equation} \label{stability bounds}
c_*(\p) \vert \vn \vert^2_{1,\E} \le \SE(\vn,\vn) \le c^*(\p) \vert \vn \vert^2_{1,\E}\quad \forall \vn \in \ker (\PiE),
\end{equation}
where $c_*(\p)$ and $c^*(\p)$ are two positive constants which may
depend on $\p$, but are independent of $\E$ and, in particular, of $\hE$.

Hence, depending on the choice of the stabilization, a class of candidates for the local discrete symmetric bilinear forms is
\begin{equation} \label{local discrete bilinear form}
\anE(\un,\vn) = \aE(\PiE \un, \PiE \vn) + \SE( (I-\PiE) \un, (I-\PiE) \vn) \quad \forall \un,\,\vn \in \VDeltaE.
\end{equation}
The forms $\anE(\cdot,\cdot)$ satisfy the two following properties:
\begin{itemize}
\item[(\textbf{P1})] \textbf{$\p$-harmonic consistency}: for all $\E \in \taun$ and for all $\p \in \mathbb N$,
\begin{equation} \label{consistency}
\aE(\qDeltap, \vn) = \anE(\qDeltap, \vn) \quad\forall \qDeltap\in \mathbb H_\p(\E),\, \forall \vn \in \VDeltaE;
\end{equation}
\item[(\textbf{P2})] \textbf{stability}: for all $\E \in \taun$ and for all $\p \in \mathbb N$,
\begin{equation} \label{stability}
\alpha_*(\p) \vert \vn \vert^2_{1,\E} \le \anE(\vn,\vn) \le \alpha^*(\p) \vert \vn \vert^2_{1,\E} \quad \forall \vn \in \VDeltaE,
\end{equation}
where $\alpha_*(\p) =\min (1,c_*(\p))$ and $\alpha^*(\p) =\max (1,c^*(\p))$.
\end{itemize}
Owing to property (\textbf{P1}), $\p$ can be addressed to as
\emph{degree of accuracy of the method}, since whenever either of its
two entries is a harmonic polynomial of degree $\p$,
the local discrete bilinear form 
can be computed exactly, up to machine precision. Moreover, since $\anE(\cdot,\cdot)$ is assumed to be symmetric, (\textbf{P2}) implies continuity
\begin{align} \label{continuity_an}
\anE(\un,\vn) \le \left( \anE(\un,\un) \right)^{1/2} \left( \anE(\vn,\vn) \right)^{1/2} \le \alpha^*(\p) \vert \un \vert_{1,\E} \vert \vn \vert_{1,\E} \quad \forall \un, \vn \in \VDeltaE.
\end{align}

The global discrete bilinear form is defined as
\begin{equation} \label{global discrete bilinear form}
\an(\un,\vn) = \sum_{\E \in \taun} \anE(\un,\vn) \quad \forall \un \in  V_{n,g_1}^{\Delta,p}, \, \forall \vn \in V_{n,g_2}^{\Delta,p}
\end{equation}
for all $g_1,g_2 \in H^{1/2}(\partial \Omega)$.
The remainder of this section is devoted to introduce an explicit
stabilization $\SE(\cdot,\cdot)$ with explicit bounds of the constants $c_*(\p)$ and $c^*(\p)$.

For all $\E \in \taun$, we define
\begin{equation} \label{explicit stabilization}
\SE(\un,\vn) = \sum_{\e\in \mathcal E^\E} \frac{\p}{\he} (\Pie \un, \Pie \vn)_{0,\e} \quad \forall \un,\, \vn \in \ker (\PiE).
\end{equation}
For this choice of stabilization forms, the following result holds
true.

\begin{thm} \label{theorem explicit bounds on explicit stability}
Assume that (\textbf{D1}) and (\textbf{D2}) hold true. Then, for any $\E \in \taun$, the stabilization $\SE(\cdot,\cdot)$ defined in \eqref{explicit stabilization} satisfies \eqref{stability bounds} with the bounds
\begin{equation} \label{explicit bounds stability}
c_*(\p) \gtrsim \p^{-2},\quad \quad c^*(\p) \lesssim \begin{cases}
\p\left( \frac{\log(\p)}{\p} \right)^{\frac{\lambda_\E}{2 }}  					& \text{if } \E \text{ is convex}\\
\p\left( \frac{\log(\p)}{\p} \right)^{\frac{\lambda_\E}{2 \omega_\E} - \varepsilon} 	& \text{otherwise }\\
\end{cases}
\end{equation}
for all $\varepsilon>0$ arbitrarily small, where the hidden constants in \eqref{explicit bounds stability} are independent of $\h$ and $\p$,
and where $\omega_\E \pi$ and $\lambda_\E \pi$, with $\omega_\E$ and $\lambda_\E \in (0,2)$, denote the largest interior and the smallest exterior angles of $\E$, respectively.
\end{thm}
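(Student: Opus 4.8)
The plan is to prove the two inequalities in \eqref{stability bounds} separately, the lower bound being elementary and the upper bound — which carries the corner-dependent exponents — requiring harmonic polynomial approximation estimates. Throughout, fix $\vn\in\ker(\PiE)$, and recall that, by \eqref{H1 bulk projector}, the condition $\PiE\vn=0$ is equivalent to the orthogonality $(\nabla\vn,\nabla\qDeltap)_{0,\E}=0$ for all $\qDeltap\in\mathbb H_\p(\E)$, together with $\int_{\partial\E}\vn\,\ds=0$.

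For the lower bound $c_*(\p)\gtrsim\p^{-2}$, I would integrate by parts and use the harmonicity of $\vn$ to write $\vert\vn\vert_{1,\E}^2=\sum_{\e\in\mathcal E^\E}\int_\e(\nabla\vn\cdot\n_\E)\,\vn\,\ds$. Since $\nabla\vn\cdot\n_\E\in\mathbb P_{\p-1}(\e)$ on each edge, the defining property of $\Pie$ in \eqref{L2 edge projector} lets me replace $\vn$ by $\Pie\vn$ in each edge integral, and the Cauchy--Schwarz inequality then gives $\vert\vn\vert_{1,\E}^2\le\sum_\e\Vert\nabla\vn\cdot\n_\E\Vert_{0,\e}\,\Vert\Pie\vn\Vert_{0,\e}$. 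The key ingredient is a $\p$-explicit inverse trace estimate for the polynomial normal derivative of a harmonic function (of the type established in \cite{HarmonicVEM}), namely $\Vert\nabla\vn\cdot\n_\E\Vert_{0,\e}\lesssim\p^{3/2}\he^{-1/2}\vert\vn\vert_{1,\E}$. Writing $\he^{-1/2}\Vert\Pie\vn\Vert_{0,\e}=\p^{-1/2}\bigl(\tfrac{\p}{\he}\Vert\Pie\vn\Vert_{0,\e}^2\bigr)^{1/2}$, applying the discrete Cauchy--Schwarz inequality over the uniformly bounded number of edges (by (\textbf{D3})), and recognising $\SE(\vn,\vn)$ from \eqref{explicit stabilization}, one obtains $\vert\vn\vert_{1,\E}\lesssim\p\,\SE(\vn,\vn)^{1/2}$, which is exactly the claimed bound.

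For the upper bound I would first reduce to a trace estimate: since $\Pie$ is an $L^2(\e)$-projection and $\he\ge\rho_1\hE$ by (\textbf{D1}), one has $\SE(\vn,\vn)\le\frac{\p}{\rho_1\hE}\Vert\vn\Vert_{0,\partial\E}^2$, so it suffices to bound $\Vert\vn\Vert_{0,\partial\E}$ by $\vert\vn\vert_{1,\E}$ with the refined constant. Here I would argue by duality, writing $\Vert\vn\Vert_{0,\partial\E}=\sup_g (\vn,g)_{0,\partial\E}/\Vert g\Vert_{0,\partial\E}$ where the supremum may be restricted to mean-zero data $g$, because $\int_{\partial\E}\vn\,\ds=0$. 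For such $g$, let $w_g$ solve the Neumann problem $\Delta w_g=0$ in $\E$, $\nabla w_g\cdot\n_\E=g$ on $\partial\E$; Green's identity then gives $(\vn,g)_{0,\partial\E}=(\nabla\vn,\nabla w_g)_{0,\E}$. The decisive step is to insert the orthogonality $(\nabla\vn,\nabla\qDeltap)_{0,\E}=0$, which yields $(\nabla\vn,\nabla w_g)_{0,\E}=(\nabla\vn,\nabla(w_g-\qDeltap))_{0,\E}$ for every $\qDeltap\in\mathbb H_\p(\E)$, whence $(\vn,g)_{0,\partial\E}\le\vert\vn\vert_{1,\E}\,\inf_{\qDeltap\in\mathbb H_\p(\E)}\vert w_g-\qDeltap\vert_{1,\E}$.

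The main obstacle, and the source of the exponents in \eqref{explicit bounds stability}, is to control the best harmonic polynomial approximation of $w_g$ uniformly in $g$. I would invoke the $\p$-version harmonic approximation estimates of \cite{babumelenk_harmonicpolynomials_approx,hmps_harmonicpolynomialsapproximationandTrefftzhpdgFEM}: the regularity of the Neumann solution $w_g$ is dictated by the corner singularities of $\E$, whose strength is governed by the largest interior angle $\omega_\E\pi$ (equivalently the smallest exterior angle $\lambda_\E\pi$), and, combining these estimates with the elliptic regularity of the Neumann problem to bound the relevant norm of $w_g$ by $\Vert g\Vert_{0,\partial\E}$, one arrives at $\inf_{\qDeltap}\vert w_g-\qDeltap\vert_{1,\E}\lesssim\hE^{1/2}\bigl(\log(\p)/\p\bigr)^{\theta}\Vert g\Vert_{0,\partial\E}$, with $\theta=\lambda_\E/4$ in the convex case and a correspondingly smaller exponent (with an arbitrarily small loss $\varepsilon$) in the reentrant case. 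Taking the supremum over $g$ gives $\Vert\vn\Vert_{0,\partial\E}\lesssim\hE^{1/2}\bigl(\log(\p)/\p\bigr)^{\theta}\vert\vn\vert_{1,\E}$, and inserting this into the reduction of the previous paragraph and squaring yields $\SE(\vn,\vn)\lesssim\p\bigl(\log(\p)/\p\bigr)^{2\theta}\vert\vn\vert_{1,\E}^2$, i.e.\ \eqref{explicit bounds stability}. The delicate points will be making the singularity analysis of $w_g$ and the constants in the harmonic approximation estimates uniform over all admissible data $g$, and correctly tracking the convex/non-convex dichotomy, the $-\varepsilon$ reflecting the limited regularity at reentrant corners.
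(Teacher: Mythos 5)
Your lower-bound argument is essentially the paper's: integrate by parts, use that $\nabla\vn\cdot\n_\E\in\mathbb P_{\p-1}(\e)$ to replace $\vn$ by $\Pie\vn$, then combine a $\p^{3/2}$ inverse trace estimate with a Neumann trace inequality and absorb the edge weights $\p/\he$ into $\SE(\vn,\vn)$. The one caveat is that the estimate you cite as known, $\Vert\nabla\vn\cdot\n_\E\Vert_{0,\partial\E}\lesssim \p^{3/2}\hE^{-1/2}\vert\vn\vert_{1,\E}$, is precisely the technical heart of the paper's proof and is not a standard inverse inequality: the normal trace is only \emph{piecewise} polynomial, in general discontinuous at vertices and hence not in $H^{1/2}(\partial\E)$, so the paper has to prove $\Vert r_\p\Vert_{0,\partial\E}\lesssim\p^{3/2}\Vert r_\p\Vert_{-1/2,\partial\E}$ by testing against $r_\p b_{\partial\E}$ with an edgewise quadratic bubble $b_{\partial\E}$, plus 1D polynomial inverse inequalities and space interpolation. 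Your proof is correct modulo this black box, which would need to be supplied.

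For the upper bound you take a genuinely different route. The paper bounds $\Vert\Pipartial\vn\Vert_{0,\partial\E}\le\Vert\vn\Vert_{0,\partial\E}\lesssim\Vert\vn\Vert_{0,\E}^{1/2}\vert\vn\vert_{1,\E}^{1/2}$ (multiplicative trace plus \Poincare) and then controls the bulk norm $\Vert\vn\Vert_{0,\E}$ via an averaging trick and \cite[Lemma 3.2]{HarmonicVEM}, a duality estimate against a Poisson problem with bulk $L^2$ datum; the convex/non-convex dichotomy in \eqref{explicit bounds stability} originates exactly from the $H^2$ versus $H^{1+1/\omega_\E-\varepsilon}$ regularity of that bulk dual problem. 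You instead dualize $\Vert\vn\Vert_{0,\partial\E}$ directly against a Neumann problem with mean-zero $L^2(\partial\E)$ datum $g$, use the $H^1$-orthogonality of $\ker(\PiE)$ to $\mathbb H_\p(\E)$, and approximate the dual solution $w_g$ by harmonic polynomials. This works, and if carried out with the sharp ingredients it even beats the theorem: by Jerison--Kenig-type regularity, $w_g\in H^{3/2}(\E)$ with $\Vert w_g\Vert_{3/2,\E}\lesssim\Vert g\Vert_{0,\partial\E}$ on any Lipschitz polygon (the Neumann corner exponents $1/\omega_\E>1/2$ do not obstruct $H^{3/2}$ when $\omega_\E<2$; the cap comes from the $L^2$ datum, not the corners), and Lemma \ref{lemma approximation harmonic polynomials} with $s=1/2$ then gives $\inf_{\qDeltap}\vert w_g-\qDeltap\vert_{1,\E}\lesssim\hE^{1/2}\left(\log\p/\p\right)^{\lambda_\E/2}\Vert g\Vert_{0,\partial\E}$, hence $c^*(\p)\lesssim\p\left(\log\p/\p\right)^{\lambda_\E}$ with no convexity dichotomy at all. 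So your stated exponents ($\theta=\lambda_\E/4$, and a smaller one in the reentrant case) misattribute the mechanism — they are undersells of what your own argument delivers — but since they are weaker than the true rate, the bounds \eqref{explicit bounds stability} follow a fortiori. The trade-off: your route needs uniform $H^{3/2}$ Neumann regularity with $L^2$ data (nontrivial, though uniform under (\textbf{D1})--(\textbf{D2})), where the paper can lean on the already-established lemma from the conforming harmonic VEM; in exchange it is a one-step duality with a sharper, convexity-independent constant.
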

\begin{proof}
We assume, without loss of generality, that $\hE = 1$; the general result follows from a scaling argument.

For any function $\vn$ in
$\VDeltaE$, we have
\begin{equation} \label{estimate stabilization}
\begin{split} 
\vert \vn \vert^2_{1,\E} &= 
-\int_\E \underbrace{(\Delta \vn)}_{=0}\vn \,\dx + \int_{\partial\E}\nabla\vn\cdot\n_K \, \vn \, \ds\\
&= \sum_{\e \in \mathcal E^K} \int_\e 
\nabla\vn\cdot\n_K  (\Pie \vn) \, \ds \le \Vert \nabla \vn \cdot \n_\E \Vert_{0,\partial \E} \Vert \Pipartial \vn \Vert_{0,\partial \E}
\end{split}
\end{equation}
where we have set, with an abuse of notation, $(\Pipartial \vn) _{|_\e} = \Pie (\vv _{n_{|_\e}})$.
We prove that
\begin{equation}\label{eq:polynbound}
\Vert \nabla \vn \cdot \n_\E \Vert_{0,\partial \E} \lesssim p^{\frac{3}{2}}
\Vert \nabla \vn \cdot \n_\E \Vert_{-\frac{1}{2},\partial \E}.
\end{equation}
To this end, 
we set, for the sake of simplicity, $r_\p:=\nabla \vn \cdot
\n_\E$, and consider the case $r_\p\ne 0$. One has $r_\p \in
L^2(\partial \E)$ with $r_{p|_e} \in \mathbb P_\p(e)$ for all $e \in
\mathcal E^{\E}$. In general, $r_p \notin H^{1/2}(\partial \E)$. 
Further, we introduce the piecewise bubble function $b_{\partial \E} \in H^{1/2}(\partial \E)$ defined edgewise as
\begin{equation*}
(b_{\partial \E})_{|_e}(\x):=(\beta \circ \phi_e^{-1})(\x) \quad \forall e \in \mathcal{E}^{\E},
\end{equation*}
where $\phi_e: [-1,1] \to e$ is the linear transformation mapping the interval $[-1,1]$ to the edge $\e$, and $\beta: [-1,1] \to [0,1]$ is the 1D quadratic bubble function 
$\beta(x):=4(1-x^2)$.

From the definition of the $H^{-1/2}(\partial \E)$ norm,
the fact that $r_\p \in L^2(\partial \E)$, and $r_p b_{\partial \E}
\in H^{1/2}(\partial \E) \backslash \{0\}$, we have
\begin{equation} \label{estimate stabilization 6}
\lVert r_p \rVert_{-\frac{1}{2},\partial \E} 
=\sup_{\psi \in H^{1/2}(\partial \E) \backslash \{0\}}
\frac{(r_p,\psi)_{0,\partial \E}}{\lVert \psi \rVert_{\frac{1}{2},\partial \E}}
\ge
\frac{(r_p,r_p b_{\partial \E})_{0,\partial \E}}{\lVert r_p b_{\partial \E} \rVert_{\frac{1}{2},\partial \E}}
=\frac{\lVert r_p b_{\partial \E}^{\frac{1}{2}} \rVert^2_{0,\partial \E}}{\lVert r_p b_{\partial \E} \rVert_{\frac{1}{2},\partial \E}}.
\end{equation}
We have the two following 
polynomial $\p$-inverse inequalities: 
\begin{equation} \label{estimate stabilization 3_4}
\lVert r_p b_{\partial \E} \rVert_{0,e}
\le \lVert r_p b_{\partial \E}^{\frac{1}{2}} \rVert_{0,e},
\quad
\lvert r_p b_{\partial \E} \rvert_{1,e}
\lesssim p \lVert r_p b_{\partial \E}^{\frac{1}{2}} \rVert_{0,e}
\quad \forall e \in \mathcal{E}^\E.
\end{equation}
The first one
is a direct consequence of the fact that the range of $b_{\partial \E}$ is
$[0,1]$, and the second one 
follows from \cite[Lemma 2]{bank2013saturation}.
Using \eqref{estimate stabilization 3_4}, summing over all edges $e \in \mathcal{E}^\E$, and applying interpolation theory, lead to
\begin{equation*} 
\lVert r_p b_{\partial \E} \rVert_{\frac{1}{2},\partial \E} \lesssim p^{\frac{1}{2}} \lVert r_p b_{\partial \E}^{\frac{1}{2}} \rVert_{0,\partial \E},
\end{equation*}
which, together with
\eqref{estimate stabilization 6}, gives
\begin{equation*}
\lVert r_p \rVert_{-\frac{1}{2},\partial \E} 
\gtrsim p^{-\frac{1}{2}} \lVert r_p b_{\partial \E}^{\frac{1}{2}} \rVert_{0,\partial \E}
\gtrsim p^{-\frac{3}{2}} \lVert r_p \rVert_{0,\partial \E},
\end{equation*}
where, in the last inequality, \cite[Lemma 4]{bernardi2001error} was
used.
The bound \eqref{eq:polynbound} follows immediately.

From \eqref{estimate stabilization} and \eqref{eq:polynbound},
taking also into account that $\Delta \vn = 0$ in $K$, we get
\[
\vert \vn \vert^2_{1,\E} 
\lesssim \p^{\frac{3}{2}} \Vert \nabla \vn \cdot \n_\E \Vert_{-\frac{1}{2},\partial \E} \Vert \Pipartial \vn \Vert_{0,\partial \E}
\lesssim \p^{\frac{3}{2}} \vert \vn \vert_{1,\E} \Vert \Pipartial \vn \Vert_{0,\partial \E},
\]
where in the last step we have used a Neumann trace inequality, see e.g. \cite[Theorem A.33]{SchwabpandhpFEM}.
This proves the first inequality of \eqref{stability bounds} with $c_*(\p) \gtrsim p^{-2}$.

In order to prove the second one, we can write
\begin{equation} \label{isola}
\Vert \Pipartial \vn \Vert_{0,\partial \E} \le \Vert \vn \Vert _{0,\partial \E} \lesssim \Vert \vn \Vert_{0,\E}^{\frac{1}{2}} \vert \vn \vert_{1,\E}^{\frac{1}{2}},
\end{equation}
where we have used the stability of the $L^2$ projection,
the multiplicative trace inequality, and the \Poincare\, inequality, see~\cite{brenner2003poincare}, 
which is valid since $\vn \in \ker(\PiE)$ and thus has zero mean value on $\partial\E$, see~\eqref{H1 bulk projector}. 

Let us bound the first factor on the right-hand side of~\eqref{isola}. To this end, we define $\vnbar$ as the average of $\vn$ over the polygon $\E$. A triangle inequality yields
\begin{equation} \label{triangle inequality isola}
\Vert \vn \Vert_{0,\E} \le \Vert \vn - \vnbar \Vert_{0,\E} + \Vert \vnbar \Vert_{0,\E}.
\end{equation}
Recalling that $\vn$ has zero average over $\partial \E$, we have
\[
\Vert  \vnbar \Vert_{0,\E} = \vert \E \vert ^{\frac{1}{2}} \vert \vnbar \vert = \frac{\vert \E \vert^{\frac{1}{2}}}{\vert \partial \E \vert} \left\vert \int_{\partial \E} \vnbar - \vn  \, \ds \right\vert.
\]
A Cauchy-Schwarz inequality, together with the multiplicative trace inequality, yields
\[
\Vert  \vnbar \Vert_{0,\E} \lesssim \Vert \vn - \vnbar \Vert_{0,\E}^{\frac{1}{2}} \vert \vn \vert^{\frac{1}{2}}_{1,\E}.
\]
Inserting this inequality in~\eqref{triangle inequality isola} gives
\begin{equation}\label{eq:boundnewstar}
\Vert  \vn \Vert_{0,\E} \lesssim \Vert \vn - \vnbar \Vert_{0,\E}  + 
\Vert \vn - \vnbar \Vert_{0,\E}^{\frac{1}{2}} \vert \vn \vert^{\frac{1}{2}}_{1,\E}. 
\end{equation}
From~\cite[Lemma 3.2]{HarmonicVEM}, we have
\[
\Vert  \vn  - \vnbar\Vert_{0,\E}^{\frac{1}{2}} \lesssim
\begin{cases}
\left( \frac{\log(\p)}{\p} \right)^{\lambda_\E} \vert  \vn \vert_{1,\E}					& \text{if } \E \text{ is convex}\\
\left( \frac{\log(\p)}{\p} \right)^{\frac{\lambda_\E}{\omega_\E} - \varepsilon} \vert  \vn \vert_{1,\E}	& \text{otherwise}\\
\end{cases}
\]
for all $\varepsilon>0$ arbitrarily small. 
Inserting this into~\eqref{eq:boundnewstar} gives
\[
\Vert  \vn \Vert_{0,\E} \lesssim
\begin{cases}
\left( \frac{\log(\p)}{\p} \right)^{\frac{\lambda_\E}{2}} \vert  \vn \vert_{1,\E}					& \text{if } \E \text{ is convex}\\
\left( \frac{\log(\p)}{\p} \right)^{\frac{\lambda_\E}{2\omega_\E} - \varepsilon} \vert  \vn \vert_{1,\E}	& \text{otherwise},\\
\end{cases}
\]
which, together with~\eqref{isola}, gives \eqref{stability bounds} with $c^*(p)$
as in~\eqref{explicit bounds stability}.
\end{proof}

Owing to \eqref{stability} and \eqref{explicit bounds stability} one deduces
\begin{equation*} 
\alpha_*(\p) \gtrsim \p^{-2},\quad \quad \alpha^*(\p) \lesssim
\begin{cases}
\p\left( \frac{\log(\p)}{\p} \right)^{\frac{\lambda_\E}{2 }} 						& \text{if } \E \text{ is convex}\\
\p\left( \frac{\log(\p)}{\p} \right)^{\frac{\lambda_\E}{2 \omega_\E} - \varepsilon}  	& \text{otherwise }\\
\end{cases}
\end{equation*}
for all $\varepsilon>0$ arbitrarily small.

\begin{remark}
In the conforming harmonic VEM setting \cite{HarmonicVEM}, the following local stabilization forms were introduced:
\[
\SE(\un,\vn) = (\un,\vn)_{\frac{1}{2}, \partial \E} \quad \forall \E \in \taun.
\]
It was proven that employing such stabilization forms leads to have stability constants $\alpha_*(\p)$ and $\alpha^*(\p)$ that are independent of the degree of accuracy~$\p$.
However, in the present non-conforming setting, such a stabilization is not computable, as the Dirichlet traces of functions in the local VE spaces are not available in closed form.

\end{remark}

We investigate numerically the behavior of the conditioning of the global VE matrix in terms of the degree of accuracy~$\p$, when employing the local stabilization forms in~\eqref{explicit stabilization}.
In Figure \ref{fig:condition number}, we plot the condition number for
different values of $\p$, when computing the global stiffness matrix on a Cartesian mesh, a Voronoi-Lloyd mesh , and an Escher horses mesh, see Figure~\ref{fig:meshes}, and note that it grows algebraically  with~$\p$.
%
We remark that the condition number of standard (non-harmonic) VEM
can grow exponentially or algebraically with~$\p$, depending on the choice of the internal degrees of freedom. This was investigated in~\cite{fetishVEM2D}.

\begin{figure}[htbp]
\centering
\begin{minipage}{0.49\textwidth}
\includegraphics[width=\textwidth]{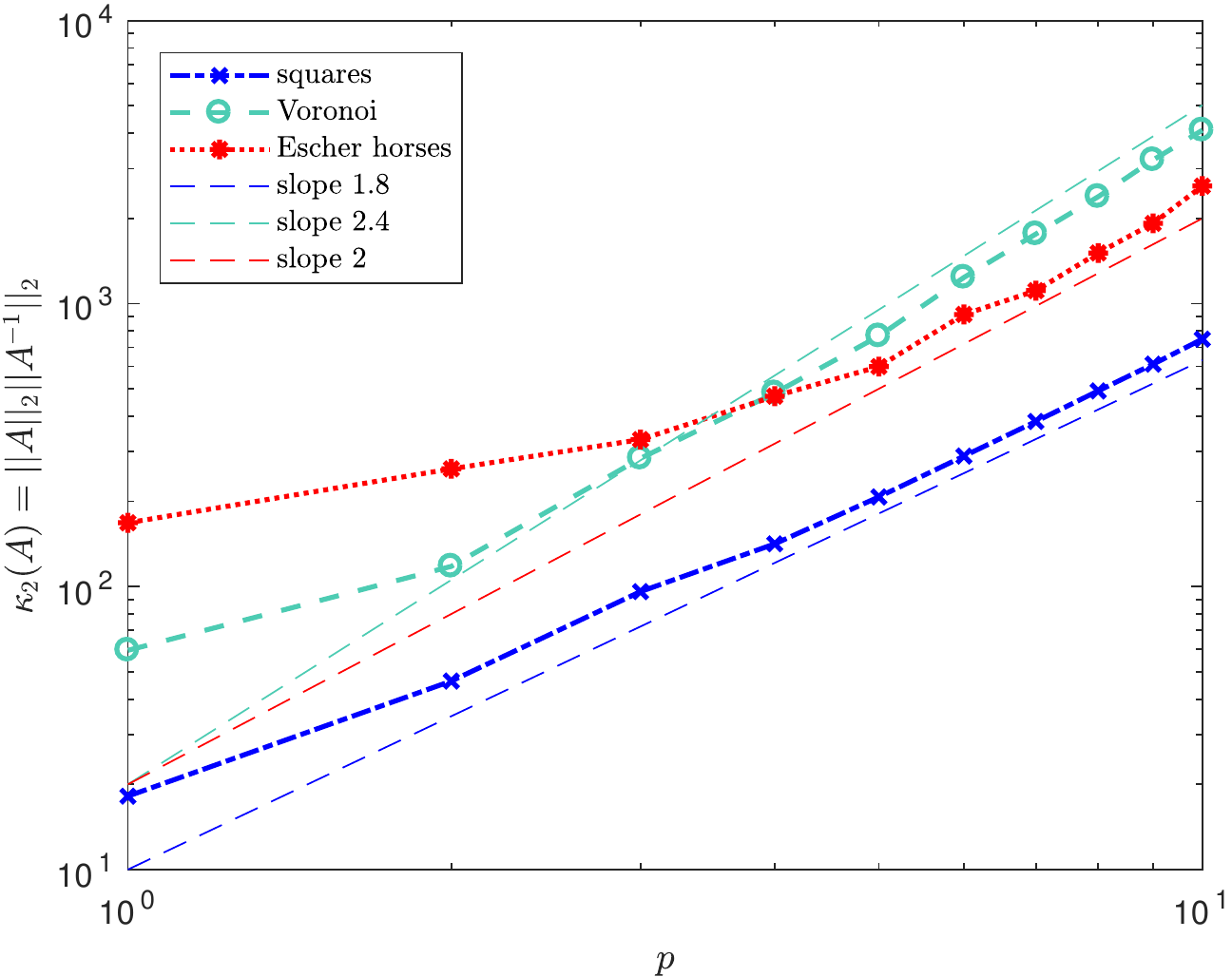}
\end{minipage}
\caption{Condition number for different values of $\p$ of the global stiffness matrix obtained
 with the local stabilization forms in~\eqref{explicit stabilization}.
A Cartesian mesh, a Voronoi-Lloyd mesh, and an Escher horses mesh have been considered.
We observe algebraic growth of the condition number with~$\p$ for all the tested meshes.}
\label{fig:condition number} 
\end{figure}



\subsection{Abstract error analysis} \label{subsection abstract error
  analysis}
Along the lines of
\cite{VEMvolley, hpVEMbasic, HarmonicVEM}, we provide in this section an abstract error analysis of the method \eqref{VEM}, taking into account the non-conformity of the approximation. To this purpose, we introduce the auxiliary bilinear form
\begin{equation} \label{non conformity term}
\mathcal N_n : H^1(\Omega) \times 
H^{1,\nc}_{0} (\taun,\p) 
\rightarrow \mathbb R,\quad \mathcal N_n (\u,\vv)  = \sum_{\e \in \mathcal E_n} \int_\e \nabla \u \cdot\llbracket \vv \rrbracket \, \ds.
\end{equation}
The following 
convergence result holds true.
\begin{thm} \label{theorem abstract error analysis}
Assume that (\textbf{D1}) and (\textbf{D2}) hold true and
consider the non-conforming harmonic VEM \eqref{VEM} defined by choosing the harmonic VE spaces as in \eqref{global non conforming harmonic VES} and \eqref{local VE space}, with level of non-conformity, as
well as degree of accuracy, equal to $\p$, and by choosing the discrete bilinear form as in~\eqref{global discrete bilinear form} and~\eqref{local discrete bilinear form},
with stabilization form $\SE(\cdot,\cdot)$ satisfying \eqref{stability bounds}. Then, the method is well-posed and
the following bound holds true:
\begin{equation} \label{abstract bound}
\vert \u - \un \vert_{1,\taun} \le 
 \frac{\alpha^*(\p)}{\alpha_*(\p)} \left\{ 
6 \inf_{\qDeltap \in \mathcal{S}^{p,\Delta,-1}(\taun)}
\vert \u - \qDeltap \vert_{1,\taun} + \sup_{\vn \in \Vnpz} \frac{\mathcal N_n (\u,\vn)}{\vert \vn \vert_{1,\taun}}  \right\}, 
\end{equation}
where we recall that $\mathcal{S}^{p,\Delta,-1}(\taun)$ is defined in \eqref{space SpDelta-1}, $\mathcal N_n (\cdot, \cdot)$ 
is given in \eqref{non conformity term}, and the stability constants $\alpha_*(\p)$ and
$\alpha^*(\p)$ are introduced in \eqref{stability}. 
\end{thm}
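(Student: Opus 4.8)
The plan is to argue by a Strang-type lemma adapted to the non-conforming setting, combining the stability and consistency properties (\textbf{P1})--(\textbf{P2}) with the best approximation estimate of Proposition~\ref{theorem best approximation VE functions}. Well-posedness of~\eqref{VEM} comes first: on the test space $\Vnpz \subseteq H^{1,\nc}_0(\taun,\p)$ the seminorm $\vert\cdot\vert_{1,\taun}$ is in fact a norm, by the \Poincare\ inequality~\eqref{Poincare Brenner}; the lower bound in~\eqref{stability} yields coercivity $\an(\vn,\vn)\ge \alpha_*(\p)\vert\vn\vert_{1,\taun}^2$, and~\eqref{continuity_an} gives continuity of $\an(\cdot,\cdot)$. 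Writing $\Vnpg$ as the affine translate of $\Vnpz$ by a fixed element, existence and uniqueness of $\un$ then follow from the Lax--Milgram lemma.

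For the error bound, I would take $\uI\in\Vnpg$ to be the interpolant constructed in Proposition~\ref{theorem best approximation VE functions} and set $\deltan:=\un-\uI$. This difference lies in $\Vnpz$, since $\un$ and $\uI$ share the same non-conforming boundary data and $\VDeltaE$ is a linear space; hence $\deltan$ is an admissible test function. The triangle inequality gives $\vert\u-\un\vert_{1,\taun}\le \vert\u-\uI\vert_{1,\taun}+\vert\deltan\vert_{1,\taun}$, so it remains to estimate $\vert\deltan\vert_{1,\taun}$. Using coercivity, the Galerkin orthogonality $\an(\un,\deltan)=0$, and inserting an arbitrary $\qDeltap\in\mathcal{S}^{p,\Delta,-1}(\taun)$, I would write
\begin{equation*}
\alpha_*(\p)\vert\deltan\vert_{1,\taun}^2 \le \an(\deltan,\deltan) = -\an(\uI-\qDeltap,\deltan) - \an(\qDeltap,\deltan).
\end{equation*}
Since $\mathbb H_\p(\E)\subseteq\VDeltaE$, the first term is controlled element-wise by the continuity bound~\eqref{continuity_an} and a Cauchy--Schwarz inequality over the elements, producing $\alpha^*(\p)\,\vert\uI-\qDeltap\vert_{1,\taun}\vert\deltan\vert_{1,\taun}$.

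The heart of the argument is the second term, where the non-conformity surfaces. By the $\p$-harmonic consistency~(\textbf{P1}), and since $\qDeltap$ is piecewise harmonic, $\an(\qDeltap,\deltan)=\sum_{\E}\aE(\qDeltap,\deltan)=\sum_\E(\nabla\qDeltap,\nabla\deltan)_{0,\E}$. Splitting $\qDeltap=(\qDeltap-\u)+\u$, integrating by parts element by element, and using $\Delta\u=0$, I obtain $\sum_\E(\nabla\u,\nabla\deltan)_{0,\E}=\mathcal N_n(\u,\deltan)$: because $\nabla\u$ is single-valued across interior edges while $\deltan$ is only weakly continuous, the interface contributions do not cancel but reassemble into the jump functional~\eqref{non conformity term}. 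The leftover piece $\sum_\E(\nabla(\qDeltap-\u),\nabla\deltan)_{0,\E}$ is bounded by $\vert\u-\qDeltap\vert_{1,\taun}\vert\deltan\vert_{1,\taun}$ by Cauchy--Schwarz. Collecting the three estimates, dividing by $\vert\deltan\vert_{1,\taun}$, and bounding $\mathcal N_n(\u,\deltan)/\vert\deltan\vert_{1,\taun}$ by the supremum over $\Vnpz$ yields a bound on $\vert\deltan\vert_{1,\taun}$ in terms of $\vert\uI-\qDeltap\vert_{1,\taun}$, $\vert\u-\qDeltap\vert_{1,\taun}$, and the non-conformity supremum.

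Finally, I would apply the triangle inequality $\vert\uI-\qDeltap\vert_{1,\taun}\le\vert\u-\uI\vert_{1,\taun}+\vert\u-\qDeltap\vert_{1,\taun}$ together with the Proposition's estimate $\vert\u-\uI\vert_{1,\taun}\le 2\vert\u-\qDeltap\vert_{1,\taun}$ to express everything through $\vert\u-\qDeltap\vert_{1,\taun}$, absorb the numerical constants using $\alpha_*(\p)\le 1\le\alpha^*(\p)$ (so that $\alpha^*(\p)/\alpha_*(\p)\ge 1$), and take the infimum over $\qDeltap\in\mathcal{S}^{p,\Delta,-1}(\taun)$ to recover~\eqref{abstract bound}. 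The step requiring the most care, and the genuinely non-conforming ingredient, is the integration-by-parts identity producing $\mathcal N_n(\u,\deltan)$: one must track that the single-valued normal flux of the exact harmonic solution, tested against the jumps of the discrete function, is precisely the quantity that cannot be absorbed into the best-approximation term, which is exactly why it is isolated as the separate consistency defect $\mathcal N_n(\u,\cdot)$.
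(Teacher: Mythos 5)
Your proposal is correct and follows essentially the same route as the paper's proof: well-posedness via the \Poincare\ inequality~\eqref{Poincare Brenner} and Lax--Milgram, then the splitting $\u-\un=(\u-\uI)+\deltan$ with $\uI$ from Proposition~\ref{theorem best approximation VE functions}, coercivity plus Galerkin orthogonality, the insertion of $\qDeltap$ combined with the consistency property~(\textbf{P1}), the element-wise integration by parts (using $\Delta\u=0$) that reassembles the boundary terms into $\mathcal N_n(\u,\deltan)$, and finally the triangle inequalities and $\alpha_*(\p)\le 1\le\alpha^*(\p)$ to absorb constants and reach the factor $6$ in~\eqref{abstract bound}. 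The only difference is cosmetic: you split $-\an(\uI,\deltan)$ as $-\an(\uI-\qDeltap,\deltan)-\an(\qDeltap,\deltan)$ and then apply~(\textbf{P1}) to the second piece, whereas the paper performs the three-term decomposition in one step, which is the same algebra.
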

\begin{proof}
The well-posedness of the method 
follows directly from~\eqref{Poincare Brenner}, \eqref{stability} and the Lax-Milgram lemma. 

For the bound \eqref{abstract bound}, 
we observe that
\[
\vert \u - \un \vert_{1,\taun} \le \vert \u - \uI \vert_{1,\taun} + \vert \un - \uI \vert_{1,\taun} \quad \forall \uI \in \Vnpg.
\]
We estimate the second term on the right-hand side. 
Set $\deltan:= \un - \uI$. Since $\un,\uI\in \Vnpg$, then
  $\deltan\in \Vnpz$. Therefore,
for all $\qDeltap \in \mathcal{S}^{p,\Delta,-1}(\taun)$,
using~\eqref{stability}, \eqref{VEM} and \eqref{consistency}, we have
\[
\begin{split}
\vert \deltan \vert^2_{1,\taun} 	& = \sum_{\E \in \taun} \vert \deltan \vert_{1,\E}^2 \le \frac{1}{\alpha_*(\p)} \sum_{\E \in \taun} \anE(\deltan,\deltan) = -\frac{1}{\alpha_*(\p)} \sum_{\E \in \taun} \anE(\uI,\deltan)\\
& = -\frac{1}{\alpha_*(\p)} \left\{ \sum_{\E \in \taun} \left[ \anE(\uI - \qDeltap, \deltan) + \aE(\qDeltap - \u,\deltan) \right] + \sum_{\E \in \taun} \aE(\u, \deltan) \right\}.\\
\end{split}
\]
The last term on the right-hand side can be rewritten in the spirit of non-conforming methods.
More precisely, we observe that an
integration by parts, the fact that 
$\Delta u=0$ in every $\E\in \taun$,
and the definition \eqref{non conformity term},
yield
\[
\sum_{\E \in \taun} \aE (\u, \deltan) = \sum_{\E \in \taun}
\int_{\partial \E} 
\nabla \u\cdot\n_\E
\,\deltan \, \ds = \sum_{\e \in \mathcal E_n} \int_\e \nabla \u \cdot \llbracket \deltan \rrbracket \, \ds = \mathcal N_n(\u,\deltan).
\]
This, together with the stability property \eqref{stability}, 
  the triangle and the Cauchy-Schwarz inequalities, gives
\[
\vert \deltan \vert^2_{1,\taun}\le
\frac{1}{\alpha_*(\p)}\Big[\Big(\alpha^*(\p) (\vert u_I-u
    \vert_{1,\taun}+\vert u- \qDeltap\vert_{1,\taun})
+\vert \qDeltap-u \vert_{1,\taun}\Big)
\vert \deltan \vert_{1,\taun}
+\mathcal N_n(\u,\deltan)
\Big].
\]
Therefore, using Proposition~\ref{theorem best approximation VE functions} and $\alpha^*(\p)\ge 1$, we obtain
\[
\begin{split}
\vert \deltan \vert_{1,\taun}&\le
\frac{1}{\alpha_*(\p)}\left[\alpha^*(\p) (2\vert u-\qDeltap
    \vert_{1,\taun}+\vert u- \qDeltap\vert_{1,\taun})
+\vert \qDeltap-u \vert_{1,\taun}+\frac{\mathcal N_n(\u,\deltan)}{\vert \deltan
  \vert_{1,\taun}}\right]\\
&\le
\frac{\alpha^*(\p)}{\alpha_*(\p)} \left[ 4\vert u-  \qDeltap\vert_{1,\taun}+\frac{\mathcal N_n(\u,\deltan)}{\vert \deltan
  \vert_{1,\taun}}
\right],
\end{split}
\]
and bound~\eqref{abstract bound} readily follows.
\end{proof}
We refer to the term $\frac{\alpha^*(\p)}{\alpha_*(\p)}$ appearing in \eqref{abstract bound} as \emph{pollution factor}.
\begin{remark} \label{remark on abstract error analysis}
It is interesting to note that the counterpart of Theorem \ref{theorem abstract error analysis} in the conforming version of the harmonic VEM in~\cite{HarmonicVEM} states that the error of the method is bounded, up to a constant
times the pollution factor $\frac{\alpha^*(\p)}{\alpha_*(\p)}$,
by a best approximation error with respect to piecewise discontinuous
harmonic polynomials, plus the best approximation
error with respect to functions in the global approximation space.
In the non-conforming setting of the present paper, however, the
latter term is not present, thanks to Proposition~\ref{theorem best
  approximation VE functions}. The additional term here is related to the
non-conformity.
\end{remark}

\subsection{$\h$- and $\p$-error analysis} \label{subsection h and p non conforming HVEM}
This section is devoted to the $\h$- and $\p$-analysis of the method \eqref{VEM} employing non-conforming harmonic VE spaces with degree of non-conformity equal to the degree of accuracy of the method.

For the analysis, we have to discuss how to bound the two terms on the right-hand side of \eqref{abstract bound} in terms of $\h$ and $\p$.
The first term, i.e., the best approximation error with respect to discontinuous
harmonic polynomials, can be dealt with following \cite{melenk_phdthesis, melenk1999operator}.
In particular, we recall the following result from~\cite[Theorem 2.9]{melenk1999operator} (see also \cite[Chapter~II]{melenk_phdthesis}).

\begin{lem} \label{lemma approximation harmonic polynomials}
Under the 
star-shapedness
assumption (\textbf{D2}), 
for a given $\E \in \taun$, 
we denote by $\lambda_\E\, \pi$, $0<\lambda_\E<2$, its smallest exterior angle.
Then, for every harmonic function $\u$ in $H^{s+1}(\E)$, $s \ge 0$, 
there exists a
sequence $\{\qDeltap\}_{\p\in \mathbb N}$,
with $\qDeltap \in \mathbb H_\p(\E)$ for all $\p \in \mathbb N$ with $\p \ge s-1$, such that
\begin{equation}  \label{Babu Melenk estimate}
\vert \u - \qDeltap \vert_{1,\E} \le c \hE^s \left( \frac{\log(\p)}{\p}  \right)^{\lambda_\E s} \Vert \u \Vert_{s+1,\E},
\end{equation}
for some positive constant $c$ depending only on $\rho_2$.
\end{lem}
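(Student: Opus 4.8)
Since Lemma~\ref{lemma approximation harmonic polynomials} is a harmonic-approximation estimate on a (possibly non-convex) star-shaped polygon, the plan is to split the error into a \emph{regularity-into-analyticity} contribution and an \emph{analytic-approximation} contribution, and to balance the two. First I would reduce to the normalized case $\hE=1$ by dilation; rescaling the seminorm on the left and the full norm on the right reproduces the factor $\hE^s$ in~\eqref{Babu Melenk estimate}. The second preliminary reduction is to pass to complex analysis: in two dimensions every harmonic $\u$ equals $\mathrm{Re}\,F$ for some holomorphic $F$, and $\{\mathrm{Re}\,z^k,\ \mathrm{Im}\,z^k\}_{k\le \p}$ spans $\mathbb H_\p(\E)$, so approximating $\u$ in $\mathbb H_\p(\E)$ is equivalent to approximating the holomorphic $F$ by complex polynomials of degree $\p$.

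Next I would introduce an intermediate function $\u_\rho$, harmonic and \emph{analytic up to $\partial\E$} on a $\rho$-neighbourhood of $\overline{\E}$, obtained from $\u$ by a harmonicity-preserving regularization (mollification against a radial kernel, or equivalently a frequency truncation of $F$); here assumption (\textbf{D2}) provides the interior ball of radius $\approx\rho_2$ that makes such a construction robust with constants depending only on $\rho_2$. The point is to establish the two complementary estimates
\begin{equation*}
\vert \u - \u_\rho \vert_{1,\E} \lesssim \rho^{s}\,\Vert \u \Vert_{s+1,\E}, \qquad \inf_{\qDeltap \in \mathbb H_\p(\E)} \vert \u_\rho - \qDeltap \vert_{1,\E} \lesssim e^{-b\,\rho^{1/\lambda_\E}\,\p}\,\Vert \u \Vert_{s+1,\E}.
\end{equation*}
The first follows from the fact that band-limiting/mollifying an $H^{s+1}$ function to analyticity width $\rho$ costs $O(\rho^{s})$ in the $H^1$ seminorm, which (unlike a crude dilation) genuinely exploits the full Sobolev smoothness for all $s\ge 0$.

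The second estimate is the crux and the main obstacle. I would invoke the classical theory of polynomial approximation on Jordan domains (Walsh–Mergelyan, Faber polynomials): the best degree-$\p$ approximation of a function holomorphic up to the level curve $\{|\Phi|=1+\eta\}$ of the exterior conformal map $\Phi$ decays like $(1+\eta)^{-\p}\approx e^{-\eta\,\p}$. The corner enters through the behaviour of $\Phi$ near a vertex with exterior angle $\lambda_\E\pi$, where $\Phi$ behaves like $z\mapsto z^{1/\lambda_\E}$; hence the level curve $1+\eta$ comes within distance $\approx \eta^{\lambda_\E}$ of the vertex. To certify that $\u_\rho$, analytic only within a band of width $\rho$ around $\E$, is holomorphic up to that level curve, one must take $\eta\approx\rho^{1/\lambda_\E}$, which yields the stated exponential rate. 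Controlling $\Phi$ uniformly near the finitely many corners (finiteness by (\textbf{D3})), with the smallest exterior angle $\lambda_\E$ governing the worst case and constants depending only on $\rho_2$, is the genuinely technical step.

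Finally I would optimize over $\rho=\rho(\p)$. Choosing $\rho\approx(\log\p/\p)^{\lambda_\E}$ makes the regularization term $\rho^{s}\approx(\log\p/\p)^{\lambda_\E s}$, while $\rho^{1/\lambda_\E}\,\p\approx\log\p$ forces the exponential term to decay like a fixed power of $\p$, which is negligible relative to the first term (taking the implied constant in $\rho$ large enough in terms of $s$). Summing the two bounds and undoing the normalization $\hE=1$ then gives~\eqref{Babu Melenk estimate}; the admissibility constraint $\p\ge s-1$ enters only to guarantee that $\mathbb H_\p(\E)$ is already rich enough for the analytic-approximation step.
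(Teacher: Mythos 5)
First, a remark on the comparison itself: the paper offers \emph{no proof} of Lemma~\ref{lemma approximation harmonic polynomials} --- it is quoted from \cite[Theorem 2.9]{melenk1999operator} (see also \cite[Chapter II]{melenk_phdthesis}) --- so your attempt must be measured against that classical argument, whose broad outline (write $\u=\mathrm{Re}\,F$ with $F$ holomorphic, approximate $F$ by complex polynomials, let the exterior conformal map and its corner behaviour $z\mapsto z^{1/\lambda_\E}$ govern the rate) your sketch does share. The first genuine gap is your regularization step. Convolution of a harmonic function with a radial kernel is the \emph{identity} wherever it is defined (mean value property), so it shrinks rather than enlarges the region of harmonicity; more fundamentally, a generic harmonic $\u\in H^{s+1}(\E)$ admits no harmonic extension across $\partial\E$ whatsoever, so no smoothing intrinsic to $\E$ can produce your $\u_\rho$, harmonic on a $\rho$-neighbourhood of $\overline{\E}$ with $\vert \u-\u_\rho\vert_{1,\E}\lesssim\rho^{s}\Vert \u\Vert_{s+1,\E}$. ``Frequency truncation of $F$'' is equally unavailable: $F$ lives only on $\E$, and truncating its Taylor expansion about the star centre \emph{is} polynomial approximation, i.e., the problem being solved. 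This first ``complementary estimate'' is the analytical heart of the theorem (in the cited proof it is obtained by Jackson/Hermite-interpolation arguments in the conformal parameter, not by smoothing), so the proposal assumes precisely what is hardest. Relatedly, your exponential bound should carry the norm of $\u_\rho$ on the level-curve region --- a quantity that necessarily blows up as $\rho\to0$ --- and writing $\Vert \u\Vert_{s+1,\E}$ there hides exactly what the balancing step must control.

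Second, even granting $\u_\rho$, the balancing is internally inconsistent in the case the paper cares most about. Walsh-type exponential convergence requires the level curve $\{\lvert\Phi\rvert=1+\eta\}$ to lie inside the region of analyticity. Away from the corners that region extends only a distance $\rho$ beyond $\overline{\E}$, forcing $\eta\lesssim\rho$; near a corner of exterior angle $\lambda_\E\pi$ the level curve sits at distance $\approx\eta^{\lambda_\E}$, which is the binding constraint only when $\lambda_\E\le 1$, i.e., for non-convex $\E$. Your choice $\eta\approx\rho^{1/\lambda_\E}$ therefore violates containment along the straight parts of $\partial\E$ whenever $\lambda_\E>1$; with a uniform band of width $\rho$ one can never take $\eta$ larger than $\approx\rho$, and optimization then yields only $(\log\p/\p)^{s}$. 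In other words, your scheme cannot reproduce the improved rate $(\log\p/\p)^{\lambda_\E s}$ on convex elements --- the superconvergence emphasized in the remark following the lemma and used (with $s=1$ and convex decompositions) in the $L^2$ analysis of Theorem~\ref{theorem L2 estimates}. Capturing it requires exploiting that an $H^{s+1}$ harmonic function is \emph{conformally smoother} at convex corners (the pullback of the boundary values by the exterior map gains regularity there, since $\lvert\psi'\rvert\sim\lvert w-w_0\rvert^{\lambda_\E-1}$ degenerates), which is how the exponent arises in \cite{melenk1999operator}; a uniform analytic-band decomposition discards this information. A smaller point: the factor $\hE^{s}$ does not follow from rescaling the full $H^{s+1}$ norm, since the lower-order terms scale with lower powers of $\hE$; one must first subtract an averaged Taylor polynomial --- automatically harmonic, as $\Delta$ commutes with Taylor truncation of a harmonic function --- and invoke Bramble--Hilbert, which is also where the dependence on $\rho_2$ enters.
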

\begin{remark}
We underline that the $p$-version approximation of harmonic functions by means of harmonic polynomials has different rates of convergence than that of generic (non-harmonic) functions by means of full polynomials. In particular, from~\eqref{Babu Melenk estimate},
one deduces that, on convex elements, a better convergence rate is achieved (i.e., harmonic functions can be better approximated by polynomials than generic functions, even by considering harmonic polynomials only),
while on non-convex elements, the rate of approximation gets worse (i.e., the best approximation of harmonic functions by full polynomials fails to be achieved with harmonic polynomials).
\end{remark}

Next, we have to bound the non-conformity term $\mathcal N_n (\u,\vn)$ introduced in~\eqref{non conformity term}. To this purpose, we use tools of non-conforming methods and $\h\p$-analysis.

Firstly, we define $\Omegaext$ as an extension 
of the domain $\Omega$, subordinated to polygonal decompositions.
More precisely, let $\tautilden$ be a triangulation of $\Omega$ which
is given by the union of local triangulations $\tautilden(\E)$ over
each polygon $\E \in \taun$ ($\tautilden$ is nested in $\taun$);
such local triangulations are obtained by connecting the vertices of $\E$ to the center of the ball with respect to which $\E$ is star-shaped, see assumption (\textbf{D2}).
With each triangle $\T \in \tautilden$, we associate $\Q(\T)$, a
parallelogram obtained by reflecting $\T$ with respect to the midpoint
of one of its edges, which is arbitrarily fixed.
Then, we set
\begin{equation} \label{inflated domain}
\Omegaext := \bigcup_{\T\in \tautilden} \Q(\T).
\end{equation}
Notice that $\Omegaext$ could coincide with $\Omega$.

The following lemma provides an upper bound for the non-conformity term $\mathcal N_n(u,\vn)$. 
\begin{lem} \label{lemma approximation non-conforming term}
Assume that (\textbf{D1})-(\textbf{D4}) are satisfied. Then, for all $s\ge 1$ and for all $\u \in H^{s+1}(\Omegaext)$, the following bound holds true:
\begin{equation*} 
\vert \mathcal N_n(\u, \vn) \vert \le c\, d^s\frac{\h^{\min(s,\p)}}{\p^{s}} \Vert \u \Vert_{s+1,\Omegaext} \vert \vn \vert_{1,\taun}\quad \forall \vn \in \Vnpz,
\end{equation*}
where $c$ is a positive constant depending only on $\rho_1$, $\rho_2$,
$\rho_3$, and $\Lambda$, and $d$ is a positive constant. 
\end{lem}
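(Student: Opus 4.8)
The plan is to exploit the orthogonality built into the non-conforming space $\Vnpz$ in order to subtract, edgewise, a suitable polynomial approximation of $\nabla \u$ from the integrand, and then to control the resulting edge terms by $\h\p$-approximation estimates for $\nabla\u$ combined with trace inequalities. First I would recall that, since $\vn \in \Vnpz = H^{1,\nc}_0(\taun,\p)$, the definition \eqref{non conforming space homogeneous} gives $\int_\e \llbracket \vn \rrbracket \cdot \n \, \q_{\p-1}\, \ds = 0$ for every $\q_{\p-1}\in \mathbb P_{\p-1}(\e)$ and every edge $\e \in \mathcal E_n$. Consequently, for \emph{any} vector-valued field $\boldsymbol w$ whose normal component restricted to each edge lies in $\mathbb P_{\p-1}(\e)$, one has $\sum_{\e}\int_\e \boldsymbol w \cdot \llbracket \vn \rrbracket \, \ds = 0$. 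This lets me write, for every such $\boldsymbol w$,
\begin{equation*}
\mathcal N_n(\u,\vn) = \sum_{\e \in \mathcal E_n} \int_\e (\nabla \u - \boldsymbol w)\cdot \llbracket \vn \rrbracket \, \ds,
\end{equation*}
thereby gaining the freedom to choose $\boldsymbol w$ as an accurate edgewise polynomial approximation of $\nabla\u$.

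Next I would fix the approximant $\boldsymbol w$ using the inflated domain $\Omegaext$ and the auxiliary parallelogram construction \eqref{inflated domain}: on each triangle $\T\in\tautilden$ I would take a tensor-product polynomial approximation of $\nabla\u$ of degree $\p-1$ on the reflected parallelogram $\Q(\T)$, which is available because $\u \in H^{s+1}(\Omegaext)$ and hence $\nabla\u \in H^{s}(\Omegaext)$. The purpose of $\Omegaext$ is precisely to supply a tensor-product polynomial approximation whose $\h\p$-error on each triangle — and, after trace inequalities, on each edge — is controlled in the sharp $\frac{\h^{\min(s,\p)}}{\p^{s}}$ scaling; this is the standard $hp$-approximation machinery of \cite{melenk_phdthesis, melenk1999operator}, and the parameter $d$ is the constant arising from reflecting across edge midpoints. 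Then, for each edge $\e$, I would bound
\begin{equation*}
\left| \int_\e (\nabla \u - \boldsymbol w)\cdot \llbracket \vn \rrbracket \, \ds \right| \le \Vert \nabla\u - \boldsymbol w \Vert_{0,\e} \, \Vert \llbracket \vn \rrbracket \Vert_{0,\e},
\end{equation*}
applying a trace inequality to pass from the $L^2(\e)$ norm of $\nabla\u - \boldsymbol w$ to a scaled $H^s$ bound over the adjacent triangles in $\Omegaext$, and a discrete trace (or scaled trace) inequality to control $\Vert \llbracket \vn \rrbracket \Vert_{0,\e}$ by $\h^{1/2}$ times the local $H^1$-seminorm of $\vn$ over the two adjacent elements.

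Summing over all edges, using the uniform bound $\card(\mathcal E^\E)\le \Lambda$ from (\textbf{D3}) and the quasi-uniformity (\textbf{D4}) to replace local mesh sizes $\h_\E$ by the global $\h$ up to constants, and applying the discrete Cauchy--Schwarz inequality over elements, I would collect the powers of $\h$ and $\p$: the trace step contributes $\h^{1/2}$ from the jump factor and the approximation step contributes $\h^{s+1/2-1}/\p^s = \h^{s-1/2}/\p^s$ (more carefully, $\h^{\min(s,\p)-1/2}/\p^s$ to respect the saturation when $\p<s$), so that the two $\h^{1/2}$ contributions combine into the clean factor $\h^{\min(s,\p)}/\p^{s}$, with $\Vert\u\Vert_{s+1,\Omegaext}$ from the approximation error and $\vert\vn\vert_{1,\taun}$ from the jumps. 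The main obstacle I anticipate is the bookkeeping of the $\p$-powers in the trace and approximation estimates: one must use an $\h\p$-explicit trace inequality (not merely an $\h$-explicit one) so that no spurious powers of $\p$ are introduced when passing from volume to edge norms, and one must invoke the sharp tensor-product approximation result on $\Q(\T)$ that yields exactly $\p^{-s}$ rather than a weaker rate. Verifying that the edge normal component of $\boldsymbol w$ genuinely lies in $\mathbb P_{\p-1}(\e)$ — so that the orthogonality of $\vn$ can be invoked — is the subtle compatibility condition that ties the construction together, and it is why the degree of the polynomial approximation is taken to be $\p-1$ and why the reflection construction producing tensor-product polynomials on $\Q(\T)$ is used.
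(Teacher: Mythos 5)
Your opening reduction is legitimate, with one cosmetic caveat: your triangle-wise tensor-product approximant is two-valued on mesh edges, so you must designate one side per edge to obtain a single-valued polynomial normal component before invoking the jump orthogonality. The genuine problem is quantitative: as designed, your plan can only produce $\h^{\min(s,\p)}/\p^{s-1/2}$, not $\h^{\min(s,\p)}/\p^{s}$, and no $\h\p$-explicit trace inequality can repair this. The trace of $\nabla\u$ on an edge $\e$ has only $H^{s-1/2}(\e)$ regularity, so your first factor $\Vert\nabla\u-\boldsymbol w\Vert_{0,\e}$ cannot decay faster than $\p^{-(s-1/2)}$ for \emph{any} choice of edgewise polynomial $\boldsymbol w$ of degree $\p-1$; your bookkeeping claim that the approximation step contributes $\h^{\min(s,\p)-1/2}/\p^{s}$ is therefore unattainable (the multiplicative trace inequality applied to the volume approximation error also yields exactly $\p^{-(s-1/2)}$). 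Since your jump factor is bounded by $\h^{1/2}$ times local $H^1$ seminorms with no gain in $\p$ whatsoever, the product falls short of the lemma by a factor $\p^{1/2}$.

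The missing idea is that the jump factor itself must contribute a $\p^{-1/2}$, and it can, because your treatment throws away most of the orthogonality available: $\llbracket\vn\rrbracket$ is $L^2(\e)$-orthogonal to all of $\mathbb P_{\p-1}(\e)$ (times the constant normal), not merely to constants, so in fact $\Pie\llbracket\vn\rrbracket=0$ and
\begin{equation*}
\Vert\llbracket\vn\rrbracket\Vert_{0,\e}
=\Vert\llbracket\vn\rrbracket-\Pie\llbracket\vn\rrbracket\Vert_{0,\e}
\le \Vert \vn{}_{|_{\T^+}}-\Pi^{0,\T^+}_{\p-1}\vn{}_{|_{\T^+}}\Vert_{0,\e}
+\Vert \vn{}_{|_{\T^-}}-\Pi^{0,\T^-}_{\p-1}\vn{}_{|_{\T^-}}\Vert_{0,\e},
\end{equation*}
where $\T^\pm$ are the sub-triangles abutting $\e$. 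This is the paper's route: it writes the non-conformity term in the doubly-projected form $\sum_\e\int_\e(\nabla\u-\Pie\nabla\u)\cdot(\llbracket\vn\rrbracket-\Pie\llbracket\vn\rrbracket)\,\ds$, as in \eqref{initial bound non conformity}, and then invokes the result of Chernov \eqref{Chernov bound}, which states that the trace of the $L^2(\T)$-projection error satisfies $\Vert w-\Pi^{0,\T}_{\p-1}w\Vert_{0,\e}\lesssim\p^{-1/2}\vert w\vert_{1,\T}$. Applied to $\vn$ on $\T^\pm$, this gives the jump factor the extra $\p^{-1/2}$; applied to $\nabla\u$, combined with invariance under subtraction of $\qp\in\mathbb P_{\p}(\T)$ and the $\h\p$-estimate on $\Q(\T)$ (your step), it gives the first factor $\lesssim d^s\,\p^{-s+1/2}\,\vert\nabla\u\vert_{s,\Q(\T)}$. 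The per-edge product is then $d^s\p^{-s}$, and summation, Cauchy--Schwarz, and a scaling argument (the paper normalizes $\h=1$ at the outset) conclude. Your zero-mean-plus-\Poincare\ treatment of the jump is the $\p$-blind version of this step, and it is precisely where your half power of $\p$ is lost.
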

\begin{proof}
Without loss of generality, let us assume that 
$h=1$, so that $\rho_3^{-1}\le h_\E\le 1$ for all $\E \in \taun$, due to the assumption (\textbf{D4}); the general assertion follows from a scaling argument.

First, we observe that, for all $\vn \in \Vnpz$, the definition of non-conforming spaces and basic properties of orthogonal projectors yield
\begin{equation} \label{initial bound non conformity}
\begin{split}
\vert \mathcal N_n (\u,\vn) \vert 	& = \left\vert \sum_{\e \in \mathcal E_n} \int_\e \nabla \u \cdot \llbracket\vn\rrbracket \, \ds \right\vert=\left\vert  \sum_{\e \in \mathcal E_n} \int_\e (\nabla \u - \Pie(\nabla\u)) \cdot(\llbracket \vn \rrbracket - \Pie \llbracket \vn \rrbracket) \, \ds  \right\vert\\
& \le \sum_{\e \in \mathcal E_n} \left\Vert \nabla \u - \Pie (\nabla \u) \right \Vert_{0,\e} \left\Vert \llbracket \vn \rrbracket - \Pie \llbracket \vn \rrbracket \right\Vert_{0,\e},
\end{split}
\end{equation}
where we have denoted by $\Pie$, with an abuse of notation, the $L^2$
projector onto the vectorial polynomial spaces of degree $\p-1$ on $e$.

In order to estimate the first term on the right-hand side, we proceed as follows. 
Let us consider $\tautilden$, the union of the local triangulations
$\tautilden(\E)$ of each $\E \in \taun$ defined as above.
%
The triangulation $\tautilden$ has the property that each 
$T\in \tautilden$ 
is star-shaped with respect to a ball of radius greater than or equal
to $\rho_4 h_\T$, where $\rho_4$ is a positive constant
and $h_\T$ is the diameter of the triangle $\T$, see \cite{VEMchileans}. 
Let now $\e \in \mathcal E_n$ 
be fixed and $\E \in \taun$ be a polygon with $\e \in \mathcal E^\E$. Then,
\[
\Vert \nabla \u - \Pie (\nabla \u) \Vert_{0,\e} \le \Vert \nabla \u - \PiT (\nabla \u) \Vert_{0,\e},
\]
where $\PiT$ is the $L^2$ projector onto the space of vectorial polynomials of
degree at most $\p-1$ over $\T$, and $\T$ is the triangle in
$\tautilden (\E)$ with $\e \subset \partial \T$ (this inequality holds true because the restriction of $\PiT (\nabla \u)$ to $e$ is a vectorial polynomial of degree $\p-1$).

For any $v\in H^2(\T)$, due to \cite[Theorem 3.1]{ChernovoptimalconvergencetracepolynomialsL2projectiononasimplex}, we have
\begin{equation} \label{Chernov bound}
\Vert \nabla v - \PiT (\nabla v) \Vert_{0,\e} \le \frac{\sqrt 5 + 1}{\sqrt 2} \p^{-\frac{1}{2}} \vert \nabla v \vert_{1,\T}.
\end{equation}
Using that $\PiT \nabla\qp = \nabla\qp$ for all $\qp \in \mathbb{P}_{\p}(\T)$, owing to \eqref{Chernov bound}, we get
\[
\Vert \nabla \u - \PiT (\nabla \u) \Vert_{0,\e} 
= \Vert (\nabla (\u- \qp)) - \PiT (\nabla (\u - \qp)) \Vert_{0,\e}
\lesssim \p^{-\frac{1}{2}} \vert \nabla (\u - \qp) \vert_{1,\T}.
\]
Applying now standard $\h\p$-polynomial approximation results, see e.g. \cite[Lemma 5.1]{hpVEMbasic}, we obtain for every $\qp \in \mathbb P_{\p} (\T)$,
\begin{equation} \label{equation stellina}
\vert \nabla (\u - \qpmu) \vert_{1,T} \lesssim d^s \p^{-s+1} \vert \nabla \u \vert_{s,\Q(\T)},
\end{equation}
where $d$ is a positive constant and $\Q(\T)$ is the
parallelogram given by the union of $\T$ and its reflection defined above.

Moving to the second term in \eqref{initial bound non conformity}, 
assuming that $e=\partial \T^-\cap\partial\T^+$, where $\T^{\pm} \in \tautilden$ and $\T^{\pm} \subset \E^{\pm}$,
we have
\[
\left\Vert \llbracket \vn \rrbracket - \Pie \llbracket \vn \rrbracket
\right \Vert_{0,\e}  \le \Vert v_{n|_{\T^+}} - 
\Pi_{p-1}^{0,\T^+} 
v_{n|_{\T^+}} \Vert_{0,\e} + \Vert v_{n|_{\T^-}} - 
\Pi_{p-1}^{0,\T^-} 
v_{n|_{\T^-}}\Vert_{0,\e}.
\]
Then, applying once again \cite[Theorem 3.1]{ChernovoptimalconvergencetracepolynomialsL2projectiononasimplex}, we deduce
\[
\left\Vert \llbracket \vn \rrbracket - \Pie \llbracket \vn \rrbracket
\right \Vert_{0,\e} \lesssim \p^{-\frac{1}{2}} \left( \vert v_{n|_{\T^+}}
\vert_{1,\T^+} 
+ \vert v_{n|_{\T^-}}
\vert_{1,\T^-} 
\right).
\]
By combining the bounds of the two terms on the right-hand side of
\eqref{initial bound non conformity} and the definition of the
extended 
domain $\Omegaext$ in \eqref{inflated domain}, we get the
assertion. 
\end{proof}

We are now ready to state the main $\h$- and $\p$-error estimate result.
\begin{thm} \label{theorem h and p VEM}
Let $\{\taun\}_{n\in \mathbb N}$ be a sequence of polygonal decompositions satisfying (\textbf{D1})-(\textbf{D4}). Let $\u$ and $\un$ be the solutions to \eqref{Laplace problem weak formulation} and \eqref{VEM}, respectively;
we assume that $\u$ is the restriction to $\Omega$ of an
$H^{s+1}$, $s\ge1$, function
(which we still denote $\u$, with a slight abuse of notation), 
over $\Omegaext$, where $\Omegaext$ is defined in \eqref{inflated domain}.
Then, the following a priori $\h$- and $\p$-error estimate holds true:
\[
\vert \u - \un \vert_{1,\taun} \le c\, d^s\,
\frac{\alpha^*(\p)}{\alpha_*(\p)}\, \h^{\min(s,\p)} \left\{  \left( \frac{\log(\p)}{\p}\right)^{\min_{\E \in \taun}(\lambda_\E)\,s} +\p^{-s} \right\} \Vert \u \Vert_{s+1,\Omegaext},
\]
where
$c$ is a positive constant depending only on $\rho_1$, $\rho_2$, $\rho_3$, and $\Lambda$,
d is a positive constant,
$\lambda_\E \, \pi$ denotes the smallest exterior angle of $\E$ for
each $\E\in \taun$, and $\frac{\alpha^*(\p)}{\alpha_*(\p)}$ is the pollution factor appearing
in \eqref{abstract bound}, which is related to the choice of the stabilization.
\end{thm}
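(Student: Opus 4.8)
The plan is to combine the abstract error estimate of Theorem~\ref{theorem abstract error analysis} with the two approximation results established above, namely Lemma~\ref{lemma approximation harmonic polynomials} for the best approximation by discontinuous harmonic polynomials and Lemma~\ref{lemma approximation non-conforming term} for the non-conformity term. Since $\{\taun\}_{n\in\mathbb N}$ satisfies (\textbf{D1})--(\textbf{D4}), Theorem~\ref{theorem abstract error analysis} applies and gives
\[
\vert \u - \un \vert_{1,\taun} \le \frac{\alpha^*(\p)}{\alpha_*(\p)} \left\{ 6 \inf_{\qDeltap \in \mathcal{S}^{p,\Delta,-1}(\taun)} \vert \u - \qDeltap \vert_{1,\taun} + \sup_{\vn \in \Vnpz} \frac{\mathcal N_n (\u,\vn)}{\vert \vn \vert_{1,\taun}} \right\},
\]
so that it only remains to bound the infimum and the supremum on the right-hand side; the pollution factor $\alpha^*(\p)/\alpha_*(\p)$ is then simply carried through to the final estimate.

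First I would treat the best-approximation term. Because $\mathcal{S}^{p,\Delta,-1}(\taun)$ is a space of \emph{discontinuous} piecewise harmonic polynomials, the infimum decouples over the elements, so on each $\E \in \taun$ I may take the harmonic polynomial $\qDeltap$ furnished by Lemma~\ref{lemma approximation harmonic polynomials} (legitimate since $\u$ is harmonic and lies in $H^{s+1}(\E)$). Squaring the local bound, summing over $\E \in \taun$, and invoking (\textbf{D4}) to replace every $\hE$ by $\h$ produces a factor $\h^{s}$ outside the sum. The element-dependent angle exponent is handled by monotonicity: since $\log(\p)/\p < 1$ for all $\p\ge 1$, one has $(\log(\p)/\p)^{\lambda_\E s} \le (\log(\p)/\p)^{\min_{\E}(\lambda_\E) s}$, which lets me pull the smallest exterior angle out of the sum. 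What remains, $\sum_{\E} \Vert \u \Vert_{s+1,\E}^2 = \Vert \u \Vert_{s+1,\Omega}^2 \le \Vert \u \Vert_{s+1,\Omegaext}^2$ using $\Omega \subseteq \Omegaext$, yields after taking square roots a bound of the form $\h^{s}(\log(\p)/\p)^{\min_{\E}(\lambda_\E)s}\Vert\u\Vert_{s+1,\Omegaext}$; the exponent on $\h$ is reduced to $\min(s,\p)$ by the usual saturation argument, applying Lemma~\ref{lemma approximation harmonic polynomials} at the regularity capped by the polynomial degree when $s>\p$.

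The supremum over the non-conformity term is immediate: Lemma~\ref{lemma approximation non-conforming term}, valid for $s\ge 1$ under (\textbf{D1})--(\textbf{D4}), gives $\vert \mathcal N_n(\u,\vn)\vert \le c\,d^{s}\,\h^{\min(s,\p)}\p^{-s}\Vert\u\Vert_{s+1,\Omegaext}\,\vert\vn\vert_{1,\taun}$, so dividing by $\vert\vn\vert_{1,\taun}$ and taking the supremum produces exactly the $\p^{-s}$ contribution. Adding the two bounds, collecting the common factor $\h^{\min(s,\p)}\Vert\u\Vert_{s+1,\Omegaext}$, and absorbing constants yields the claimed estimate. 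Since both deep ingredients are already packaged into the two lemmas, the proof of the theorem itself is essentially assembly; the main care, rather than a genuine obstacle, lies in the bookkeeping that isolates $\min_{\E}(\lambda_\E)$ through the monotonicity of the base $\log(\p)/\p<1$, in reconciling the Sobolev index $s$ with the degree $\p$ to produce the saturated exponent $\min(s,\p)$ (which tacitly uses $\h\lesssim 1$), and in passing from the local norms on $\Omega$ to the global norm on $\Omegaext$.
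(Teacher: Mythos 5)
Your proposal is correct and follows exactly the paper's own argument: the paper's proof is the single sentence ``It is enough to combine Theorem~\ref{theorem abstract error analysis} with Lemmata \ref{lemma approximation harmonic polynomials} and \ref{lemma approximation non-conforming term},'' and your assembly (elementwise application of Lemma~\ref{lemma approximation harmonic polynomials} to the infimum, monotonicity of $\bigl(\log(\p)/\p\bigr)^{x}$ in the exponent to extract $\min_{\E}(\lambda_\E)$, and Lemma~\ref{lemma approximation non-conforming term} for the supremum) is precisely that combination with the bookkeeping made explicit. Your additional care about the saturation of the exponent to $\min(s,\p)$ and the passage from $\Omega$ to $\Omegaext$ goes beyond what the paper records, but introduces no deviation in approach.
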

\begin{proof}
It is enough to combine Theorem \ref{theorem abstract error analysis} with Lemmata \ref{lemma approximation harmonic polynomials} and \ref{lemma approximation non-conforming term}.
\end{proof}

Assuming, moreover, that $\u$, the solution to the problem \eqref{Laplace problem weak formulation}, is the restriction to $\Omega$ of an analytic function defined over $\Omegaext$, where $\Omegaext$ was introduced in \eqref{inflated domain},
it is possible to prove the following result.
\begin{thm} \label{theorem exponential convergence analytic solution}
Let (\textbf{D1})-(\textbf{D4}) be valid and assume that $\u$, the solution to the problem \eqref{Laplace problem weak formulation}, is the restriction to $\Omega$ of an analytic function defined over $\Omegaext$, given in \eqref{inflated domain}.
Then, the following a priori $\p$-error estimate holds true:
\[
\vert \u - \un \vert_ {1,\taun} \le c \exp{(-b\,\p)},
\]
for some positive constants $b$ and $c$, depending again only on $\rho_1$, $\rho_2$, $\rho_3$, $\rho_4$ and $\Lambda$.
\end{thm}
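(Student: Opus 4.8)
The plan is to feed the $\h\p$-estimate of Theorem~\ref{theorem h and p VEM} with the analyticity of $\u$ and then to optimise over the (free) regularity index $s$. Since the mesh is fixed in the $\p$-version, $\h$ is a constant and, by Theorem~\ref{theorem explicit bounds on explicit stability}, the pollution factor $\alpha^*(\p)/\alpha_*(\p)$ grows only algebraically in $\p$; hence both contribute at most an algebraic prefactor. The only genuinely $s$-dependent ingredients are the harmonic-polynomial term $(\log\p/\p)^{\min_\E\lambda_\E\, s}$, the non-conformity term $\p^{-s}$, and the Sobolev norm $\Vert \u\Vert_{s+1,\Omegaext}$. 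Because $\u$ is analytic on $\Omegaext$, it lies in $H^{s+1}(\Omegaext)$ for every $s$ and satisfies an analyticity bound of the form $\Vert \u\Vert_{s+1,\Omegaext}\lesssim \gamma^{s}\,(s+1)!$ for some $\gamma>0$ depending on $\u$ and on the size of the complex neighbourhood of $\Omegaext$ into which $\u$ extends holomorphically. First I would insert this bound into Theorem~\ref{theorem h and p VEM}, obtaining, for every integer $s\ge1$, an estimate of the shape $\vert \u-\un\vert_{1,\taun}\lesssim C(\p)\,(d\gamma)^{s}\,\h^{\min(s,\p)}\big[(\log\p/\p)^{\lambda s}+\p^{-s}\big]\,(s+1)!$ with $\lambda:=\min_{\E\in\taun}\lambda_\E$ and $C(\p)$ algebraic in $\p$.

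Next I would balance the factorial growth against the algebraic decay by choosing $s=\lfloor \nu\p\rfloor$ with a constant $\nu>0$ to be fixed. Using Stirling's formula $(s+1)!\approx(s/e)^{s}$, the non-conformity contribution becomes $\big[(d\gamma\,\nu/e)\big]^{\nu\p}$ up to algebraic factors, which, for $\nu$ small enough that $d\gamma\,\nu/e<1$, decays like $r^{\p}$ with $r<1$; this already yields the desired exponential bound for the second term and is the routine part of the argument. The same choice makes the harmonic-polynomial contribution behave like $\big[(d\gamma\,s/e)(\log\p/\p)^{\lambda}\big]^{s}$; on convex elements, where all smallest exterior angles satisfy $\lambda_\E>1$ and hence $\lambda>1$, the factor $\p^{-\lambda s}$ dominates $s^{s}$ and the term decays super-exponentially, so that $\vert \u-\un\vert_{1,\taun}\lesssim \exp(-b\,\p)$ follows directly.

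The main obstacle is the case of non-convex (re-entrant) elements, for which $\lambda<1$: then the naive $s$-balancing of the finite-regularity estimate of Lemma~\ref{lemma approximation harmonic polynomials} only produces the sub-exponential rate $\exp(-b\,(\p/\log\p)^{\lambda})$, because the factorial $(s+1)!$ cannot be fully compensated by $\p^{-\lambda s}$ when $\lambda<1$. To upgrade this to a genuine exponential rate I would not use Lemma~\ref{lemma approximation harmonic polynomials}, but rather the sharper approximation result for \emph{analytic} harmonic functions by harmonic polynomials on star-shaped domains underlying \cite{hmps_harmonicpolynomialsapproximationandTrefftzhpdgFEM, melenk1999operator}: for each $\E\in\taun$ there exist $\qDeltap\in\mathbb H_\p(\E)$ and constants $b_\E,c_\E>0$ with $\vert \u-\qDeltap\vert_{1,\E}\le c_\E\exp(-b_\E\,\p)$. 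Plugging the resulting discontinuous piecewise harmonic polynomial into Proposition~\ref{theorem best approximation VE functions} (equivalently, into the first term of the abstract bound of Theorem~\ref{theorem abstract error analysis}) and summing over the finitely many elements --- their number being uniformly bounded by \textbf{(D3)} and their sizes comparable by \textbf{(D4)} --- controls the harmonic-polynomial term by $\exp(-b\,\p)$ with $b=\min_\E b_\E$.

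Finally I would combine the exponential bound on the harmonic-polynomial term with the exponential bound on the non-conformity term established above, take the worse of the two exponential rates, and absorb the algebraic prefactor $C(\p)$ into a slightly smaller exponent. This yields $\vert \u-\un\vert_{1,\taun}\le c\exp(-b\,\p)$ with constants depending only on the mesh-regularity parameters $\rho_1,\rho_2,\rho_3,\rho_4,\Lambda$ (and, through $c$ and $b$, on the analyticity of $\u$). I expect the delicate point to be precisely the interplay on re-entrant elements between the geometry-dependent exponential rate of harmonic-polynomial approximation and the $\p^{-s}$ decay of the non-conformity term, which is why the analytic (rather than finite-regularity) approximation estimate is essential.
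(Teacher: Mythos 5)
Your proposal is correct, and its first two paragraphs coincide with the paper's own (one-line) proof: the paper simply invokes Theorem~\ref{theorem h and p VEM} together with ``the tools employed in'' \cite[Theorem~5.2]{hpVEMbasic}, which are exactly what you spell out --- the analytic-class bound $\Vert \u\Vert_{s+1,\Omegaext}\lesssim \gamma^{s}(s+1)!$, Stirling's formula, and the choice $s=\lfloor \nu\p\rfloor$ with $\nu$ small, with the fixed $\h$ and the (algebraic, by Theorem~\ref{theorem explicit bounds on explicit stability}) pollution factor absorbed into the exponential. Where you genuinely depart from the paper is in the treatment of non-convex elements, and your objection there is sound: for $\lambda=\min_{\E\in\taun}\lambda_\E<1$ no choice of $s$ makes $(s/e)^{s}(\log\p/\p)^{\lambda s}$ decay exponentially in $\p$ (the optimal $s\approx(\p/\log\p)^{\lambda}$ yields only $\exp(-b(\p/\log\p)^{\lambda})$, which is strictly weaker than $\exp(-b\p)$), so the literal combination of Lemma~\ref{lemma approximation harmonic polynomials} with the balancing argument proves the stated bound only when every element is convex ($\lambda_\E>1$). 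The paper's citation-style proof glosses over this point; your remedy --- bounding the first term of the abstract estimate of Theorem~\ref{theorem abstract error analysis} (through Proposition~\ref{theorem best approximation VE functions}) by the elementwise \emph{exponential} approximation of functions harmonic in a neighbourhood of $\overline{\E}$ by harmonic polynomials, available via Vekua/Walsh-type theory as in \cite{melenk1999operator,hmps_harmonicpolynomialsapproximationandTrefftzhpdgFEM}, while keeping the Stirling balancing only for the non-conformity term --- is precisely what is needed to get $\exp(-b\p)$ in full generality, so in this respect your argument is more complete than the paper's. Two minor points: the number of elements of the fixed mesh $\taun$ is a fixed finite number, and (\textbf{D3}) bounds the number of edges per element rather than the number of elements, so it is not the right assumption to invoke when summing the elementwise bounds; moreover, both your proof and the paper's tacitly use the convention that ``analytic over $\Omegaext$'' entails analytic-class bounds uniform up to $\partial\Omegaext$ (equivalently, analyticity on a neighbourhood of the closure), without which neither the growth bound on $\Vert\u\Vert_{s+1,\Omegaext}$ nor the Vekua-type elementwise estimate is available.
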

\begin{proof}
It is enough to use Theorem \ref{theorem h and p VEM}, in combination with the tools employed in \cite[Theorem~5.2]{hpVEMbasic}.
\end{proof}

\begin{remark}
The construction involving the collection of parallelograms in~\eqref{inflated domain} is instrumental for proving Theorem~\ref{theorem exponential convergence analytic solution}.
In order to derive the bound of Theorem~\ref{theorem exponential convergence analytic solution} from that of Theorem~\ref{theorem h and p VEM},
one needs to know the explicit dependence on~$s$ of the constant in the bound of Theorem~\ref{theorem h and p VEM}.
This comes at the price of involving the extended domain~$\Omegaext$.
If one were interested in approximating solutions with finite Sobolev regularity, then there would be no need of employing the construction with the parellelograms $Q(T)$.
In particular, equation \eqref{equation stellina} would be valid also
with the norm over the triangle $T$, instead of over $Q(T)$,  on the right-hand side. 
As a consequence, the bounds in Lemma \ref{lemma approximation non-conforming term} and in Theorem \ref{theorem h and p VEM} would
be valid also with the norm of $u$ over $\Omega$, instead of over $\Omegaext$,  on the right-hand sides.
See \cite{hpVEMbasic} for additional details on the $\h\p$-version in the case of the standard VEM setting.
\end{remark}

\subsection{Error estimates in the $L^2$ norm} \label{subsection L2 error estimate}
This section is devoted to bound the $L^2$ error of method \eqref{VEM}
in terms of the energy error and best approximation error with respect to piecewise discontinuous harmonic polynomials. 
For simplicity, we restrict ourselves to the case of convex domains and of sequences of convex polygons; the non-convex case is discussed in Remark~\ref{rem:nonconvex}.

To this purpose, we firstly recall the definition of non-conforming VE spaces introduced in~\cite{nonconformingVEMbasic} for the approximation of the Poisson problem,
and then we prove $\h\p$-best approximation estimates by functions in those spaces. The obtained results will be instrumental for proving $L^2$ error estimates for method \eqref{VEM}.
Throughout the whole section, we assume that $p$, the parameter used in the definition of local spaces \eqref{local VE space}, is equal to $\k$, the non-conformity parameter, appearing in the definition of the global non-conforming VE space \eqref{non conforming space non homogeneous}.

Let $\E \in \taun$. We define, for $\p\in \mathbb N$ arbitrary,
\begin{equation*} 
\VE := \left\{ \vn \in H^1(\E) \, \mid \, \Delta \vn \in \mathbb P_{\p-2}(\E),\, (\nabla \vn \cdot \n_{\E}) _{|_\e} \in \mathbb P_{\p-1}(\e) \ \forall \e \in \mathcal E_n \right\}.
\end{equation*}
It is proved in \cite[Lemma 3.1]{nonconformingVEMbasic} that the following is a set of degrees of freedom for the space $\VE$. Given $\vn \in \VE$, we associate the edge moments defined in \eqref{local dofs}
\begin{equation} \label{edge moments}
\frac{1}{h_e} \int_\e \vn m_{\alpha}^\e \, \ds, \quad \forall \alpha=0,\dots, \p-1, \, \forall \e \in \mathcal E^\E ,
\end{equation}
plus the bulk moments of the form 
\begin{equation} \label{bulk moments}
\frac{1}{\vert \E \vert} \int_\E \vn m_{\boldalpha} \, \dx, \quad \forall \vert \boldalpha \vert=0,\dots, \p-2 ,
\end{equation}
where $\{m_{\boldalpha}\}_{\vert \boldalpha \vert=0}^{\p-2}$ is {\it{any}} basis of $\mathbb P_{\p-2}(\E)$.

For all $\g \in H^{1/2}(\partial \Omega)$, 
the global non-conforming spaces in~\eqref{non conforming space non
  homogeneous}
are defined as in the harmonic case: 
\begin{equation} \label{global Manzini}
\Vnkgc := \left\{ \vn \in H^{1,\nc}_\g(\taun,\k) \, \mid \, {\vn}_{|_{\E}} \in \VE \ \forall \E \in \taun \right\}.
\end{equation}
The set of global degrees of freedom is obtained by a standard non-conforming coupling of the local counterparts.
The precise treatment of Dirichlet boundary conditions should be dealt with as in Remark \ref{remark how to deal with Dirichlet boundary conditions}.

We want to show that, in the $H^1$ seminorm, the error between a regular target function and its interpolant in the space $\Vnpgc$ defined in \eqref{global Manzini} can be bounded by the best approximation error in the space of piecewise discontinuous polynomials of degree at most $\p$.
Notice that neither the convexity of $\Omega$ nor the convexity of the elements are needed here.

\begin{prop} \label{theorem best error Manzini space}
Given $\g \in H^{1/2}(\partial \Omega)$, let $\psi \in \Vg$, where $\Vg$ is defined in \eqref{basic notation weak formulation}.
For every polygonal partition $\taun$ of $\Omega$, there exists $\psiI \in \Vnpgc$, with $\Vnpgc$ given in \eqref{global Manzini}, such that
\begin{equation*} 
\vert \psi - \psiI \vert_{1,\taun} \le 2 \vert \psi -\qp \vert_{1,\taun} \quad \forall q_p \in \mathcal{S}^{p,-1}(\taun),
\end{equation*}
where $\mathcal{S}^{p,-1}(\taun)$ is the space of piecewise discontinuous polynomials, that is,
\begin{align} \label{space Sp-1}
\mathcal{S}^{p,-1}(\taun):= \{ q \in L^2(\Omega): \, q_{|_\E} \in \mathbb P_\p(\E) \, \forall \E \in \taun \}.
\end{align}
\end{prop}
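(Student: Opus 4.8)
The plan is to follow the template of the proof of Proposition~\ref{theorem best approximation VE functions} almost verbatim, the only genuinely new ingredient being the use of the \emph{bulk} degrees of freedom \eqref{bulk moments}, which are absent in the purely harmonic space $\VDeltaE$. First I would define the interpolant $\psiI \in \Vnpgc$ by fixing all of its degrees of freedom \eqref{edge moments}--\eqref{bulk moments} to equal the corresponding functionals applied to $\psi$, namely
\begin{align*}
\int_\e (\psi - \psiI)\,\qpmue\, \ds = 0 \quad &\forall \qpmue \in \mathbb P_{\p-1}(\e),\ \forall \e \in \mathcal E_n,\\
\int_\E (\psi - \psiI)\, m_{\boldalpha}\, \dx = 0 \quad &\forall\, \vert \boldalpha \vert \le \p-2,\ \forall \E \in \taun.
\end{align*}
Unisolvence of \eqref{edge moments}--\eqref{bulk moments} guarantees that this prescribes $\psiI$ uniquely with $\psiI_{|_\E} \in \VE$, while the edge conditions ensure $\psiI \in H^{1,\nc}_\g(\taun,\p)$, exactly as in the harmonic case; hence $\psiI \in \Vnpgc$.

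Next I would apply the triangle inequality, $|\psi - \psiI|_{1,\taun} \le |\psi - \qp|_{1,\taun} + |\psiI - \qp|_{1,\taun}$ for all $\qp \in \mathcal{S}^{p,-1}(\taun)$, and concentrate on the second summand. Integrating by parts on each $\E \in \taun$ gives
\[
|\psiI - \qp|_{1,\E}^2 = -\int_\E (\psiI - \qp)\,\Delta(\psiI - \qp)\, \dx + \sum_{\e \in \mathcal E^\E}\int_\e (\psiI - \qp)\,\nabla(\psiI - \qp)\cdot\n_\E\, \ds.
\]
The crucial observation --- the single point where the argument departs from Proposition~\ref{theorem best approximation VE functions} --- is that here the volume term does not vanish, but $\Delta(\psiI - \qp) \in \mathbb P_{\p-2}(\E)$, since $\Delta \psiI \in \mathbb P_{\p-2}(\E)$ by the definition of $\VE$ and $\Delta \qp \in \mathbb P_{\p-2}(\E)$ because $\qp \in \mathbb P_\p(\E)$. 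Likewise $(\nabla(\psiI - \qp)\cdot\n_\E)_{|_\e} \in \mathbb P_{\p-1}(\e)$. Therefore the matching of the bulk moments up to degree $\p-2$ lets me replace $\psiI$ by $\psi$ in the volume integral, precisely as the matching of the edge moments up to degree $\p-1$ lets me replace $\psiI$ by $\psi$ in the boundary integrals.

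Finally I would integrate by parts back, recognising the resulting expression as $\int_\E \nabla(\psi - \qp)\cdot\nabla(\psiI - \qp)\, \dx$, sum over $\E \in \taun$, and apply the Cauchy--Schwarz inequality to conclude $|\psiI - \qp|_{1,\taun} \le |\psi - \qp|_{1,\taun}$; substituting into the triangle inequality yields the factor-$2$ bound. I do not expect a serious obstacle here: the whole difficulty of the harmonic proof was that the Laplacian of the difference dropped out for free, and in the present setting that role is played cleanly by the extra bulk degrees of freedom of the Manzini space, so that, as announced, no convexity of $\Omega$ or of the elements is needed.
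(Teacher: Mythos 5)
Your proposal is correct and takes essentially the same approach as the paper's own proof: the same interpolant defined by matching the edge moments \eqref{edge moments} and bulk moments \eqref{bulk moments} of $\psi$, the same integration by parts where the volume term is absorbed by the bulk moments (since $\Delta(\psiI-\qp)\in\mathbb P_{\p-2}(\E)$) and the boundary term by the edge moments (since $(\nabla(\psiI-\qp)\cdot\n_\E)_{|_\e}\in\mathbb P_{\p-1}(\e)$), followed by reverse integration by parts and Cauchy--Schwarz. The only cosmetic difference is that the paper applies the triangle inequality element by element while you apply it globally over $\taun$, which is equivalent here because $\mathcal{S}^{p,-1}(\taun)$ is exactly the product of the local spaces $\mathbb P_\p(\E)$.
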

\begin{proof}
The proof follows the lines of that of
  Proposition~\ref{theorem best approximation VE functions}.
Given $\psi \in \Vg$, we define $\psiI\in \Vnpgc$ by imposing its degrees of freedom as follows:
\begin{equation} \label{definition Manzini interpolant}
\begin{split}
\frac{1}{h_e} \int_\e (\psiI - \psi) \q_{\p-1}^\e \, \ds&= 0\quad \forall \q_{\p-1}^\e \in \mathbb P_{\p-1}(e),\, \forall \e \in \mathcal E^\E ,\, \forall \E \in \taun.\\
\frac{1}{\vert \E \vert} \int_\E (\psiI - \psi) \q_{\p-2} \, \dx&= 0\quad \forall \q_{\p-2} \in \mathbb P_{\p-2}(\E),\, \forall \E \in \taun,\\
\end{split}
\end{equation}
It is important to note that, since the degrees of freedom \eqref{edge moments} and \eqref{bulk moments} are unisolvent for the space $\Vnpgc$, the interpolant $\psiI$ is defined in a unique way.
Having this, we observe that, for all $\E \in \taun$,
\begin{equation*} 
\vert \psi - \psiI \vert_{1,\E} \le \vert \psi - \qp \vert _{1,\E} + \vert \psiI - \qp \vert_{1,\E}\quad \forall \qp \in \mathbb P_\p(\E).
\end{equation*}
We focus on the second term. By integration by parts we get
\[
\begin{split}
\vert \psiI - \qp \vert^2_{1,\E}	
& = \int_\E - \Delta (\psiI - \qp) (\psiI - \qp) \, \dx +
\int_{\partial \E} 
\nabla(\psiI-\qp)\cdot\n_K
\, (\psiI-\qp) \, \ds \\
& = \int_\E - \Delta (\psiI - \qp) (\psi - \qp) \, \dx+ \int_{\partial
  \E} 
\nabla(\psiI-\qp)\cdot\n_K
\, (\psi-\qp) \, \ds,\\
\end{split}
\]
where, in the last identity, the definition of the non-conforming space $\Vnpgc$, given in \eqref{global Manzini} and the definition \eqref{definition Manzini interpolant} of $\psiI$ via the degrees of freedom were used.
Integrating by parts back, we obtain
\[
\vert \psiI - \qp \vert^2_{1,\E} = \int_\E \nabla(\psiI-\qp) \cdot \nabla (\psi - \qp) \, \dx \le \vert \psiI - \qp \vert_{1,\E} \vert \psi - \qp\vert_{1,\E}.
\]
This concludes the proof.	
\end{proof}

We are now ready to prove a bound of the $L^2$ error of the method. We will assume henceforth that $\Omega$ is a convex domain split into a collection of convex polygons.
An analogous analysis could be performed in the non-convex case, and slightly worse error estimates could be proven, see Remark~\ref{rem:nonconvex}.
Nonetheless, here we stick to the convex setting, since we deem it is clearer.

\begin{thm} \label{theorem L2 estimates}
Let $\Omega$ be a polygonal convex domain and let $\{\taun\}_{n\in
  \mathbb N}$ be a sequence of decompositions into convex polygons satisfying (\textbf{D1})-(\textbf{D4}).
Let $\u$ and $\un$ be the solutions to \eqref{Laplace problem weak formulation} and \eqref{VEM}, respectively; we assume that  $\u$ is
the restriction to $\Omega$ of a $H^{s+1}$, $s\ge1$, function
(which we still denote, with a slight abuse of notation, $\u$) over $\Omegaext$, where $\Omegaext$ is defined in \eqref{inflated domain}.
Then, 
\begin{equation*} 
\begin{split}
&\Vert \u - \un \Vert_{0,\Omega} \le c\, 
 \left\{\frac{\h^{\min(s,\p)+1}}{\p^{s+1}} \Vert \u \Vert_{s+1,\Omegaext} \right.\\
&\qquad\qquad \left.+ \max\left( \frac{\h}{\p}, \h \,\alpha^*(\p) \left(
    \frac{\log(\p)}{\p}  \right)^{\max_{\E \in \taun}\lambda_\E}
\right) \left( \vert \u - \un \vert_{1,\taun} + \inf_{\qDeltap\in \mathcal{S}^{p,\Delta,-1}(\taun)}\vert \u - \qDeltap \vert_{1,\taun} \right)\right\},\\
\end{split}
\end{equation*}
where $c$ is a positive constant depending only on $\rho_1$, $\rho_2$, $\rho_3$, $\rho_4$ and $\Lambda$,
$\alpha^*(\p)$ is the ``upper'' stability constant appearing in \eqref{stability}, 
$\mathcal{S}^{p,\Delta,-1}(\taun)$ is 
defined in~\eqref{space SpDelta-1}, 
and 
$\lambda_\E \pi$ denotes the smallest exterior angle of $\E$ for each $\E \in \taun$.
\end{thm}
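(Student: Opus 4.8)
The plan is to run an Aubin--Nitsche duality argument, adapted to the non-conforming and virtual nature of the method. Since $\Omega$ is convex, I first introduce the adjoint solution $\varphi \in H^1_0(\Omega)$ of $-\Delta \varphi = \u - \un$ in $\Omega$; elliptic regularity gives $\varphi \in H^2(\Omega)$ together with $\Vert \varphi \Vert_{2,\Omega} \lesssim \Vert \u - \un \Vert_{0,\Omega}$, which is exactly where convexity enters. Testing the dual problem with $\u - \un$ and integrating by parts elementwise yields
\begin{equation*}
\Vert \u - \un \Vert_{0,\Omega}^2 = \sum_{\E \in \taun} \aE(\u - \un, \varphi) - \mathcal N_n(\varphi, \u - \un),
\end{equation*}
where the boundary contributions are collected into $\mathcal N_n(\varphi, \cdot)$ using that $\u - \un \in H^{1,\nc}_0(\taun,\p)$ (both $\u$ and $\un$ share the same non-conforming Dirichlet moments) and that $\varphi$ is single valued with zero trace on $\partial\Omega$.

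Next I would bound $\mathcal N_n$ and split the volume part. Applying Lemma~\ref{lemma approximation non-conforming term} with $s=1$ (in its $\Omega$-version, since only finite regularity of $\varphi$ is used) gives $|\mathcal N_n(\varphi, \u - \un)| \lesssim \tfrac{\h}{\p}\Vert \varphi \Vert_{2,\Omega}\,\vert \u - \un \vert_{1,\taun}$, a contribution to the $\tfrac{\h}{\p}$ argument of the maximum. For $\sum_{\E \in \taun} \aE(\u - \un, \varphi)$ I insert the interpolant $\psiI \in \Vnpzc$ of $\varphi$ in the non-conforming (non-harmonic) VE space \eqref{global Manzini}, and write $\sum_{\E \in \taun} \aE(\u-\un,\varphi) = \sum_{\E \in \taun} \aE(\u-\un, \varphi - \psiI) + \sum_{\E \in \taun} \aE(\u, \psiI) - \sum_{\E \in \taun} \aE(\un, \psiI)$. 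The first summand is controlled by Cauchy--Schwarz and Proposition~\ref{theorem best error Manzini space} together with standard $\h\p$ polynomial approximation of the $H^2$ function $\varphi$, giving $\vert \varphi - \psiI \vert_{1,\taun} \lesssim \tfrac{\h}{\p}\Vert \varphi \Vert_{2,\Omega}$; this is precisely why Proposition~\ref{theorem best error Manzini space} was established. Using $\Delta \u = 0$ and integrating by parts, the second summand equals $\mathcal N_n(\u, \psiI) = \mathcal N_n(\u, \psiI - \varphi)$ (as $\mathcal N_n(\u,\varphi)=0$), so Lemma~\ref{lemma approximation non-conforming term} combined with the same $\tfrac{\h}{\p}$ interpolation bound produces the term $\tfrac{\h^{\min(s,\p)+1}}{\p^{s+1}}\Vert \u \Vert_{s+1,\Omegaext}$.

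The key step is the third summand $\sum_{\E \in \taun} \aE(\un, \psiI)$, which must be connected to the discrete equation. Since $\nabla \un \cdot \n_K \in \mathbb P_{\p-1}(\e)$ on each edge, and both $\psiI$ and the \emph{harmonic} VE interpolant $\varphi_I \in \Vnpz$ reproduce the edge moments of $\varphi$ up to degree $\p-1$, integration by parts shows $\sum_{\E \in \taun} \aE(\un, \psiI) = \sum_{\E \in \taun} \int_{\partial \E} (\nabla\un\cdot\n_K)\,\varphi \,\ds = \sum_{\E \in \taun} \aE(\un, \varphi_I)$. Invoking the discrete problem \eqref{VEM} as $\an(\un,\varphi_I)=0$, harmonic consistency (\textbf{P1}) and the Pythagorean identity \eqref{Pythagoras theorem}, this term collapses to the stabilization inconsistency $-\sum_{\E \in \taun}\{\aE((I-\PiE)\un,(I-\PiE)\varphi_I) - \SE((I-\PiE)\un,(I-\PiE)\varphi_I)\}$, which by \eqref{stability bounds} is bounded by $\alpha^*(\p)\sum_{\E \in \taun} \vert (I-\PiE)\un \vert_{1,\E}\,\vert (I-\PiE)\varphi_I \vert_{1,\E}$. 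The first factor is estimated, after summation, by $\vert \u-\un \vert_{1,\taun} + \inf_{\qDeltap \in \mathcal S^{p,\Delta,-1}(\taun)} \vert \u - \qDeltap \vert_{1,\taun}$, since $\PiE\un$ is the best harmonic-polynomial fit of $\un$ in the $H^1$ seminorm.

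The hard part will be the second factor $\vert (I-\PiE)\varphi_I \vert_{1,\E} = \inf_{q \in \mathbb H_\p(\E)} \vert \varphi_I - q \vert_{1,\E}$: because the dual solution is not harmonic, this cannot be bounded at the optimal $\tfrac{\h}{\p}$ rate. Here I would exploit that $\varphi_I$ itself \emph{is} harmonic and apply Lemma~\ref{lemma approximation harmonic polynomials} with $s=1$ to $\varphi_I$, at the price of an $H^2$-stability estimate $\Vert \varphi_I \Vert_{2,\E} \lesssim \Vert \varphi \Vert_{2,\E}$ for the harmonic VE interpolant of a non-harmonic function; this yields the suboptimal factor $\h\big(\tfrac{\log(\p)}{\p}\big)^{\lambda_\E}$. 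Together with the pollution $\alpha^*(\p)$ this provides the second argument of the maximum, while the previous paragraphs provide the $\tfrac{\h}{\p}$ argument. Replacing $\lambda_\E$ by $\max_{\E \in \taun}\lambda_\E$, dividing the assembled inequality by $\Vert \u - \un \Vert_{0,\Omega}$, and using $\Vert \varphi \Vert_{2,\Omega} \lesssim \Vert \u - \un \Vert_{0,\Omega}$ then closes the estimate. The delicate points are thus the stability of the harmonic interpolant and the bookkeeping that keeps the non-conformity and stabilization contributions separated so that they land in the two arguments of the maximum.
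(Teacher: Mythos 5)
Your proposal reproduces the paper's architecture faithfully up to the last step: the duality argument, the decomposition into the volume term, the non-conformity term, and the three summands obtained by inserting the non-harmonic interpolant $\psiI$ (the paper's $T_1$, $T_4$, $T_5$); moreover, your moment-matching identity $\sum_{\E\in\taun}\aE(\un,\psiI)=\sum_{\E\in\taun}\aE(\un,\varphi_I)$ is exactly the paper's splitting $\psiI=\psiI^1+\psiI^2$ with $\aE(\un,\psiI^1)=0$ (your $\varphi_I$ is the paper's $\psiI^2$), and your reduction via the discrete equation, consistency and stability to $\alpha^*(\p)\sum_{\E\in\taun}\vert(I-\PiE)\un\vert_{1,\E}\,\vert(I-\PiE)\varphi_I\vert_{1,\E}$ is equivalent to the paper's bound on $T_5$. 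The genuine gap is in your treatment of the factor $\vert(I-\PiE)\varphi_I\vert_{1,\E}$: you apply Lemma~\ref{lemma approximation harmonic polynomials} directly to the \emph{virtual} function $\varphi_I$, which requires the $H^2$-stability estimate $\Vert\varphi_I\Vert_{2,\E}\lesssim\Vert\varphi\Vert_{2,\E}$ for the harmonic VE interpolant of a non-harmonic function. You acknowledge this as a ``price'' but give no proof, and no such estimate is available in the paper or in the literature it cites. It is also far from innocuous: $\varphi_I$ is known only implicitly through its edge moments, its Neumann trace is an uncontrolled polynomial of degree $\p-1$ on each edge, and even the membership $\varphi_I\in H^2(\E)$ requires an argument (convexity of $\E$ makes the Neumann corner exponents exceed one, but yields no norm bound, let alone one uniform in $\p$). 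Asserting this stability essentially assumes the hard part of the theorem.

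The idea your proposal is missing is the paper's elementwise splitting of the dual solution, $\psi=\psi^1+\psi^2$, where $\psi^2$ is the local harmonic lifting defined by $\Delta\psi^2=0$ in $\E$ and $\psi^2=\psi$ on $\partial\E$ for every $\E\in\taun$. This object repairs your argument at no cost: (i) $\psi^2$ has the same edge moments as $\psi$, so your $\varphi_I$ is \emph{also} the harmonic VE interpolant of $\psi^2$, and Proposition~\ref{theorem best approximation VE functions} transfers the estimate from the virtual function to $\psi^2$, i.e.\ $\vert\varphi_I-\qDeltap\vert_{1,\taun}\lesssim\vert\psi^2-\qDeltap\vert_{1,\taun}$ for the best choice of $\qDeltap$; (ii) $\psi^2$ is a genuine elementwise harmonic Sobolev function, so Lemma~\ref{lemma approximation harmonic polynomials} with $s=1$ applies to it directly; (iii) its broken $H^2$ norm is controlled because $\psi-\psi^2$ solves local Dirichlet problems with load $\u-\un$ and zero boundary data, so local elliptic regularity on the \emph{convex elements} gives $\Vert\psi^2\Vert_{2,\taun}\lesssim\Vert\u-\un\Vert_{0,\Omega}$. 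Note that this last point, not the global regularity of $\varphi$, is where the hypothesis that the decomposition consists of convex polygons enters; your proposal attributes convexity solely to the dual problem on $\Omega$ and leaves the role of element convexity unexplained. With $\psi^2$ in place of your stability claim, the rest of your argument goes through and yields the stated bound.
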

\begin{proof}
We consider the following dual problem: Find $\psi \in H^1(\Omega)$  
such that
\begin{align}\label{dual problem}
\left\{
\begin{alignedat}{2}
-\Delta \psi & = \u - \un && \quad \text{in }\Omega \\
\psi & = 0 && \quad \text{on } \partial \Omega.
\end{alignedat}
\right.
\end{align}
Standard stability and a priori regularity theory implies
that $\psi\in H^2(\Omega)$ and
\begin{equation} \label{a priori bound}
\Vert \psi \Vert_{2,\Omega} 
\lesssim  \Vert \u - \un \Vert_{0,\Omega},
\end{equation}
where the hidden constant depends only on the domain
$\Omega$, see e.g. \cite[Theorem 3.2.1.2]{grisvard}.
\medskip
	
Using \eqref{dual problem} and \eqref{non conformity term}, and taking into account that $\u - \un\in H^{1,\nc}_{0} (\taun,\p)$,
we obtain
the following equivalent 
expression for the $L^2$ error:
\begin{align}
\Vert \u - \un \Vert^2_{0,\Omega} &= \sum_{\E \in \taun} \int_\E
                                    (-\Delta \psi) (\u - \un) \, \dx =
                                    \sum_{\E \in \taun} \left\{
                                    \int_\E \nabla \psi \cdot \nabla
                                    (\u - \un) \, \dx - \int_{\partial
                                    \E} 
\nabla \psi\cdot\n_K
\, (\u - \un) \, \ds  \right\}\notag\\
& = \sum_{\E \in \taun} \aE(\psi - \psiI, \u - \un) + \sum_{\E \in \taun} \aE(\psiI, \u - \un) - \mathcal N_n(\psi, \u - \un) \notag\\
&=: T_1+T_2+T_3,  \label{initial equivalence L2 error}
\end{align}
where 
$\psiI$ is the (unique) function in $\Vnpzc$, the enlarged space of functions with zero Dirichlet traces introduced in \eqref{global Manzini}, defined from $\psi$ via \eqref{definition Manzini interpolant}; in particular, $\psiI$ is not piecewise harmonic, in general. 
	
We begin by bounding term $T_1$. Owing to the Cauchy-Schwarz inequality and Proposition~\ref{theorem best error Manzini space}, we have
\[
|T_1| \le \vert \psi - \psiI \vert_{1,\taun} \vert \u - \un \vert_{1,\taun} \le 2 \vert \psi - \qp \vert_{1,\taun} \vert \u - \un \vert_{1,\taun} \quad \forall \qp \in \mathcal{S}^{p,-1}(\taun),
\]
where $\mathcal{S}^{p,-1}(\taun)$ is the space of piecewise
discontinuous polynomials introduced in \eqref{space Sp-1}.
By taking $\qp$ equal to the best approximation of $\psi$ in $\mathcal{S}^{p,-1}(\taun)$ 
and using \cite[Lemma 4.2]{hpVEMbasic}, together with 
\eqref{a priori bound}, we have
\[
|T_1| \lesssim \frac{\h}{\p} \Vert \psi \Vert_{2,\Omega} \vert \u - \un \vert_{1,\taun} \lesssim \frac{\h}{\p} \Vert \u - \un \Vert_{0,\Omega} \vert \u-\un \vert_{1,\taun}.
\]

Next, we focus on term $T_3$ on the right-hand side of \eqref{initial equivalence L2 error}. Following the same steps as in the proof of Lemma \ref{lemma approximation non-conforming term}, we obtain
\begin{equation*} 
\begin{split}
|T_3|  = |\mathcal N_n(\psi, \u - \un)| 	 
\le \sum_{\e \in \mathcal E_n} \left\Vert \nabla \psi - \Pie (\nabla \psi) \right \Vert_{0,\e} \left\Vert \llbracket \u-\un \rrbracket - \Pie \llbracket \u-\un \rrbracket \right\Vert_{0,\e},\\
\end{split}
\end{equation*}
where $\Pie$ denotes here again, with an abuse of notation, the $L^2$
projector onto vectorial polynomial spaces.
Applying \cite[Theorem
3.1]{ChernovoptimalconvergencetracepolynomialsL2projectiononasimplex}
and  \cite[Lemma 4.1]{hpVEMbasic} similarly as in the proof of
Lemma~\ref{lemma approximation non-conforming term}, together with 
\eqref{a priori bound} ($\vert \nabla \psi \vert_{1,\E} \le \Vert \psi
\Vert_{2,\E}$),
 we get
\begin{equation*} 
\begin{split}
|T_3| 
\lesssim \frac{\h}{\p} \Vert \u - \un \Vert_{0,\Omega} \vert \u - \un \vert_ {1,\taun}.\\
\end{split}
\end{equation*}
Finally, we study term $T_2$ on the right-hand side of \eqref{initial equivalence L2 error}, which can be split as
\begin{equation} \label{initial equivalence on B}
T_2 = \sum_{\E \in \taun} \aE(\psiI, \u - \un) = \sum_{\E \in \taun} \aE(\u, \psiI) - \sum_{\E \in \taun} \aE(\un, \psiI) =: T_4+T_5.
\end{equation}
The first term $T_5$ is related to 
the non-conformity of the discretization spaces, 
whereas the second term $E$ reflects the fact that method \eqref{VEM} does not employ the original bilinear form.

We start to bound term $T_4$. Using computations analogous to those in the proof of Lemma \ref{lemma approximation non-conforming term}, it is possible to deduce
\[
\begin{split}
|T_4|	& = \bigg| \sum_{\E \in \taun} \aE(\u,\psiI) \bigg| = \bigg|
\sum_{\E \in \taun} \int_{\partial \E} 
\nabla\u\cdot\n_K 
\, \psiI \, \ds \bigg| = |\mathcal N_n(\u,\psiI) | =  |\mathcal N_n(\u,\psiI - \psi)| \\
&\le \sum_{\e \in \mathcal E_n} \left\Vert \nabla u - \Pie (\nabla u) \right \Vert_{0,\e} \left\Vert \llbracket \psiI-\psi \rrbracket - \Pie \llbracket \psiI-\psi \rrbracket \right\Vert_{0,\e},
\end{split}
\]
where in the fourth identity we
used the fact that $\mathcal N_n(\u,\psi)=0$, which holds since $\u$ and $\psi$ are sufficiently regular, and in the last step we used~\eqref{initial bound non conformity}. 
Again, $\Pie$ has to be understood as the $L^2$ projection onto the vectorial polynomial spaces of degree at most $\p-1$ on $\e$.
Applying \cite[Theorem~3.1]{ChernovoptimalconvergencetracepolynomialsL2projectiononasimplex},  Proposition~\ref{theorem best error Manzini space}, \cite[Lemma 4.2]{hpVEMbasic}, and finally \eqref{a priori bound}, leads to
\[
\begin{split}
|T_4| \lesssim \p^{-1} \vert \nabla (\u - \Pinabla \u) \vert_{1,\taun} \vert \psi-\psiI \vert_{1,\taun}
\lesssim \frac{\h^{\min(s,\p)}}{\p^s} \Vert \u \Vert_{s+1,\Omegaext} \frac{\h}{\p} \Vert \u - \un \Vert_{0,\Omega},
\end{split}
\]
where we recall that $\Omegaext$ is defined in \eqref{inflated domain} and where $\Pinabla$ is any piecewise energy projector from $H^1(\E)$ into $\mathbb P_\p(\E)$, for all $\E\in \taun$.
	
Finally, it remains to treat term $T_5$ on the right-hand side of
\eqref{initial equivalence on B}. To this purpose, we consider the
following splittings of $\psi$ and $\psiI$.
Firstly, we split $\psi$ into $\psi=\psi^1 + \psi^2$, where $\psi^1$ and $\psi^2$ are, element by element, solutions to the local problems
\begin{align} \label{decomposition psi}
\left\{
\begin{alignedat}{2}
-\Delta \psi^1 & = -\Delta \psi && \quad \text{in }\E \\
\psi^1 & = 0 && \quad \text{on } \partial \E,
\end{alignedat}
\right.  \quad \quad
\left\{
\begin{alignedat}{2}
-\Delta \psi^2 & = 0 && \quad \text{in }\E \\
\psi^2 & = \psi && \quad \text{on } \partial \E
\end{alignedat}
\right.
\end{align}
for all $\E \in \taun$. Using \eqref{dual problem}, we can also observe that $\psi^2-\psi$ solve the local problems
\begin{align*} 
\left\{
\begin{alignedat}{2}
-\Delta (\psi-\psi^2) & = \u-\un && \quad \text{in }\E \\
\psi-\psi^2 & = 0 && \quad \text{on } \partial \E,
\end{alignedat}
\right.
\end{align*}
Then, (local) standard a priori regularity theory
and, afterwards, summation over all elements $\E \in \taun$ imply the global bound
\begin{equation} \label{a priori estimate psi2-psi}
\Vert \psi^2-\psi \Vert_{2,\taun} \lesssim \Vert \u - \un \Vert_{0,\Omega},
\end{equation}
where the broken norm $\Vert \cdot \Vert_{2,\taun}$ is defined in \eqref{broken s Sobolev}.
With the triangle inequality, \eqref{a priori bound}, and \eqref{a priori estimate psi2-psi}, we get 
\begin{equation} \label{important a priori bound}
\Vert \psi^2 \Vert _{2, \taun} \le \Vert \psi - \psi^2 \Vert_{2,\taun} + \Vert \psi \Vert_{2,\Omega} \lesssim \Vert \u - \un \Vert_{0,\Omega}.
\end{equation}
Secondly, we split $\psiI \in \Vnpzc$ into $\psiI = \psiI^1+\psiI^2$. We define $\psiI^2$ as the unique element in $\Vnpz$ introduced in \eqref{global non conforming harmonic VES}, which satisfies
\begin{equation} \label{dofs psiI2}
\frac{1}{h_e} \int_\e \psiI^2\qpmu^\e \, \ds = \frac{1}{h_e} \int_\e \psiI \qpmu^\e \, \ds \quad \forall \qpmu^\e \in \mathbb P_{p-1}(e), \, \forall e \in \mathcal E_n.
\end{equation}
Existence and uniqueness of $\psiI^2$ follow from the fact that we are defining $\psiI^2$ via unisolvent degrees of freedom for the space $\Vnpz$.
Owing to \eqref{dofs psiI2}, the definition of $\psiI$ in \eqref{definition Manzini interpolant}, and \eqref{decomposition psi}, we deduce
\[
\frac{1}{h_e} \int_\e \psiI^2\qpmu^\e \, \ds= \frac{1}{h_e} \int_\e \psiI\qpmu^\e \, \ds = \frac{1}{h_e} \int_\e \psi\qpmu^\e \, \ds= \frac{1}{h_e} \int_\e \psi^2\qpmu^\e \, \ds \quad \forall \qpmu^\e \in \mathbb P_{\p-1}(\e),\, \forall \e \in \mathcal E_n.
\]
This entails that $\psiI^2$ approximates $\psi^2$ in the sense of Proposition~\ref{theorem best approximation VE functions}.
Having this, 
the function $\psiI^1=\psiI- \psiI^2\in \Vnpzc$ satisfies
\begin{align*} 
\left\{
\begin{alignedat}{2}
\frac{1}{\vert \e \vert} \int_\e \psiI^1\qpmu^\e \, \ds & = 0 && \quad \forall \qpmu^\e \in \mathbb P_{\p-1}(\e),\, \forall \e \in \mathcal E^\E, \, \forall \E \in \taun, \\
\frac{1}{\vert \E \vert} \int_\E \psiI^1\qpmd \, \dx & = \frac{1}{\vert \E \vert} \int_\E (\psiI- \psiI^2)\qpmd \, \dx && \quad \forall \qpmd \in \mathbb P_{\p-2}(\E) , \, \forall \E \in \taun.
\end{alignedat}
\right.
\end{align*}
Moreover, since $\un \in \Vnkg$, $\psiI^1$ has the essential feature that it satisfies
\begin{equation} \label{essential feature psiI1}
\aE(\un,\psiI^1) = \int_\E (\underbrace{-\Delta \un}_{=0}) \psiI^1 \, \dx + \underbrace{\int_{\partial \E} (\nabla \un \cdot \n_{\E}) \psiI^1 \, \ds}_{=0} = 0.
\end{equation}
We have now all the tools for bounding term $T_5$. Using \eqref{essential feature psiI1}, \eqref{VEM}, and \eqref{consistency}, we get
\begin{equation*} 
\begin{split}
|T_5|	& = \bigg| \sum_{\E \in \taun} \aE(\un,\psiI^2) \bigg| = \bigg| \sum_{\E \in \taun} \left\{\anE(\un,\psiI^2) - \aE(\un, \psiI^2) \right\} \bigg|\\
& = \bigg| \sum_{\E\in \taun} \left\{ \anE(\un-\qDeltap, \psiI^2-\qtildeDeltap) - \aE(\un-\qDeltap, \psiI^2-\qtildeDeltap) \right\} \bigg|
\quad \forall \qDeltap,\, \qtildeDeltap \in \mathcal{S}^{p,\Delta,-1}(\taun),
\end{split}
\end{equation*}
where we recall that $\mathcal{S}^{p,\Delta,-1}(\taun)$ is defined in \eqref{space SpDelta-1}.
It is important to highlight that it is in fact a key point of the error analysis to have piecewise harmonic functions in both entries of the discrete bilinear form.
By applying the continuity property \eqref{continuity_an} and the
Cauchy-Schwarz inequality, then the triangle inequality and Proposition~\ref{theorem best approximation VE functions}, we deduce
\begin{align*}
|T_5| &\lesssim \alpha^*(\p) \vert \un - \qDeltap \vert_{1,\taun} \vert \psiI^2 - \qtildeDeltap \vert_{1,\taun} \\
& \le \alpha^*(\p) (\vert \u - \un\vert_{1,\taun} + \vert \u - \qDeltap \vert_{1,\taun}) (\vert \psi^2 - \psiI^2 \vert_{1,\taun} + \vert \psi^2 - \qtildeDeltap \vert_{1,\taun}) \\
&\lesssim \alpha^*(\p) (\vert \u - \un\vert_{1,\taun} + \vert \u - \qDeltap \vert_{1,\taun}) \vert \psi^2 - \qtildeDeltap \vert_{1,\taun}.
\end{align*}
Thanks to Lemma \ref{lemma approximation harmonic polynomials}
(here, $s=1$) and the bound~\eqref{important a priori bound}, we have
\[
\begin{split}
|T_5| &\lesssim \alpha^*(\p) (\vert \u - \un \vert_{1,\taun} + \vert \u
- \qDeltap\vert_{1,\taun})\,\h \left(
  \frac{\log(\p)}{\p}\right)^{\min_{\E \in \taun} \lambda_\E} \left(
  \sum_{\E \in \taun} \Vert \psi^2 \Vert^2_{2,\E}
\right)^{\frac{1}{2}}\\
&\lesssim \alpha^*(\p) (\vert \u - \un \vert_{1,\taun} + \vert \u - \qDeltap \vert_{1,\taun}) \h \left( \frac{\log(\p)}{\p} \right)^{\min_{\E \in \taun} \lambda_\E} \Vert \u - \un \Vert_{0,\Omega},
\end{split}
\]
where we recall that,  for any $\E \in \taun$, $\lambda_\E \, \pi$ denotes the smallest exterior angle of $\E$.

By combining the estimates on all the terms $T_1$ to $T_5$, we get the assertion.
\end{proof}

\begin{remark}\label{rem:nonconvex}
As already highlighted, the case of non-convex $\Omega$ can be treated analogously.
More precisely, given $\omega$ the largest reentrant angle 
of $\Omega$, the
solution of \eqref{Laplace problem weak formulation} belongs to 
$H^{1+t}(\Omega)$, with $t=\frac{\pi}{\omega}-\varepsilon$
for all $\varepsilon >0$ arbitrarily small.
Standard stability and a priori regularity theory, see \cite[Theorem 2.1]{babuvska1988regularity}, gives
\[
\Vert \psi \Vert_{1+t,\Omega} 
\le c \Vert \u - \un \Vert_{0,\Omega}
\]
for some positive constant $c$ depending only on the domain $\Omega$.
An analogous bound is valid for the 
counterpart of \eqref{a priori estimate psi2-psi} in the non-convex case.
Having this, a straightforward modification of the proof of Theorem \ref{theorem L2 estimates} leads to the $\h$- and $\p$-error bounds
\[
\begin{split}
\Vert \u - \un \Vert_{0,\Omega} \le & \left\{c\, 
\frac{\h^{\min(s,\p)+t}}{\p^{s+t}}
\Vert \u \Vert_{s+1,\Omegaext} \right. \\
&+ \max\left( \left(\frac{\h}{\p}\right)^{t}, \,\h^t \,\alpha^*(\p) \left( \frac{\log(\p)}{\p}  \right)^{\max_{\E \in \taun}(\lambda_\E) \,t} \right)\\
& \left.\quad \quad \quad \cdot\left( \vert \u - \un \vert_{1,\taun} + \inf_{\qDeltap\in \mathcal{S}^{p,\Delta,-1}(\taun)}\vert \u - \qDeltap \vert_{1,\taun} \right) \right\},\\
\end{split}
\]
where $c$ is a positive constant depending only on the constants $\rho_1$, $\rho_2$, $\rho_3$, $\rho_4$, and $\Lambda$ appearing in (\textbf{D1})-(\textbf{D4})
and in the proof of Lemma \ref{lemma approximation non-conforming term}.

The presence of non-convex polygons in the decomposition~$\taun$ leads to a possible additional loss in the convergence rate in $\p$ of the $L^2$ error, which will depend on the largest interior and exterior angles of the polygons.
\end{remark}

\subsection{Hints for the extension to the 3D case} \label{subsection extension 3D}
The aim of this section is to give a hint concerning the extension of what we have presented and discussed so far to the three dimensional case.

Concerning the definition of local harmonic VE spaces, one mimics the
strategy suggested in \cite{nonconformingVEMbasic} and defines, for
every polyhedron $\E$ in $\mathbb R^3$ and any fixed $p \in \mathbb N$,
\begin{equation*} 
\VDeltaE := \left\{ \vn \in H^1(\E) \mid \Delta \vn = 0 \text{ in}\, \E, \, (\nabla \vn \cdot \n_K)_{|_\F} \in \mathbb P_{\p-1}(\F)  \ \forall \F \text{ faces of  } \E  \right\}.
\end{equation*}

We observe that the definition of the local 3D space is a
straightforward extension of its 2D counterpart.
We underline that this is not the case when using {\it conforming} 
VEM.  In that case, typically, one also requires to have a modified
version of the local VE spaces on each face, see
\cite{equivalentprojectorsforVEM}. On the one hand, this
allows 
the construction of continuous functions over 
the boundary of a polyhedron, as well as the construction of
projectors onto proper polynomial spaces;
on the other, it 
complicates the $\p$-analysis of the method.
In the non-conforming framework, however, one does not need to fix any sort of continuity across the interface between faces of a polyhedron and thus
it suffices to impose that normal derivatives are polynomials.

The global 3D non-conforming space is built as in the 2D case. Also,
the degrees of freedom are given by scaled face moments with respect
to 
polynomials up to order $\p-1$.
\medskip

The abstract definition of the 2D local discrete bilinear form in
\eqref{local discrete bilinear form} can also be employed in the 3D case. The (properly scaled) 3D counterpart of the 2D explicit stabilization defined in
\eqref{explicit stabilization} would be
\[
\SE (\un,\vn) = \sum_{\F \text{ faces of $\E$}} \frac{\p}{\h_\F} (\PiF \un, \PiF \vn)_{0,\F},
\]
where, for any face $\F$,
$\PiF$ denotes the $L^2$ projector onto $\mathbb P_{\p-1}(\F)$ of the traces on $\F$ of functions in the 3D VE space. 
Nonetheless, it is not clear whether explicit bounds in terms of $\p$ of the stability constants appearing in \eqref{stability bounds} can be proved for this form.
In fact, in the 2D case, $\h\p$-polynomial inverse estimates in 1D were the key tool for proving Theorem \ref{theorem explicit bounds on explicit stability}.
In the 3D framework, one needs to employ $\h\p$-polynomial inverse estimates on general polygons based on weighted norms. 
We highlight that the approach of \cite[Chapter 3]{hpDEFEM_polygon}, see also~\cite{cangiani2016_ESAIM}, could be followed in order to prove such~$\h\p$-weighted inverse inequalities.
However, as this extension is quite technical, we do not investigate it here.

Independently of the specific choice of the stabilization, provided that it is symmetric and satisfies~\eqref{stability}, the abstract error analysis is dealt with similarly to the 2D case, see Theorem \ref{theorem abstract error analysis}.
The only modification is in the definition of the non-conformity term, which in 3D is defined as
\[
\mathcal N_n (\u,\vv) = \sum_{\F \in \mathcal E_n^3} \int_\F \nabla \u \cdot \llbracket \vv \rrbracket_\F \, \ds
\]
for all conforming functions $\u$ and all non-conforming functions $\vv$, where $\mathcal E_n^3$
denotes the set of faces in the polyhedral decomposition, and $\llbracket \cdot \rrbracket_\F$ is defined as in \eqref{jump operator} in terms of normal derivatives over faces.

The proof of $\h$- and $\p$-error bounds for this non-conforming term follows the same lines as in the 2D case, since \cite[Theorem 3.1]{ChernovoptimalconvergencetracepolynomialsL2projectiononasimplex} holds true on simplices in arbitrary space dimension.
For the best approximation error, one should use
the 3D version of Lemma \ref{lemma approximation harmonic
  polynomials}, which can be found e.g. in \cite[Theorem
3.12]{MoiolaPhDthesis}. 


\section{Numerical results} \label{section numerical results}
We present in Section \ref{section h and p NR} some numerical tests for the $\h$-version and the $\p$-version of the method, validating the theoretical results obtained in Section \ref{section VEM};
we conclude with a discussion and some tests on the $\h\p$-version in Section \ref{subsection NR hp}.
As already mentioned, we refer to Appendix \ref{section appendix implementation details} for details on the implementation of the method.

\subsection{Numerical results: $\h$- and $\p$-version} \label{section h and p NR}
In this section, we present numerical experiments validating the
theoretical error estimates in the $H^1 (\taun)$ ($H^1$, for short) and $L^2$ norms
discussed in Theorems \ref{theorem h and p VEM}, \ref{theorem exponential convergence analytic solution}, and \ref{theorem L2 estimates}.

For the following numerical experiments, we consider boundary value
problems of the form \eqref{Laplace problem strong formulation}, on $\Omega:=(0,1)^2$, 
with known exact solutions given by 
\begin{itemize}
\item $u_1(x,y)=e^x \sin(y)$,
\item $u_2(x,y)=u_2(r,\theta)=r^2 \left(\log(r) \sin(2\theta)+\theta \cos(2\theta) \right)$.
\end{itemize}
We underline that $u_1$ is an analytic function in $\Omega$, whereas $u_2 \in H^{3-\epsilon}(\Omega)$ for every $\epsilon>0$ arbitrarily small;
moreover, $u_2$ represents the natural singular solution at $\mathbf 0
= (0,0)$ of the Poisson problem on a square domain, see
e.g. \cite{babuvska1988regularity}.

We discretize these problems on
sequences of quasi-uniform 
Cartesian meshes and
Voronoi-Lloyd meshes of the type shown in Figure \ref{fig:meshes}, left and
center, respectively. We also test on a problem with exact solution $u_1$
on the domain $\Omega$ given by the union of four Escher horses 
as in Figure \ref{fig:meshes}, right.

\begin{figure}[htbp]
\begin{minipage}{0.335\textwidth} 
\includegraphics[width=\textwidth]{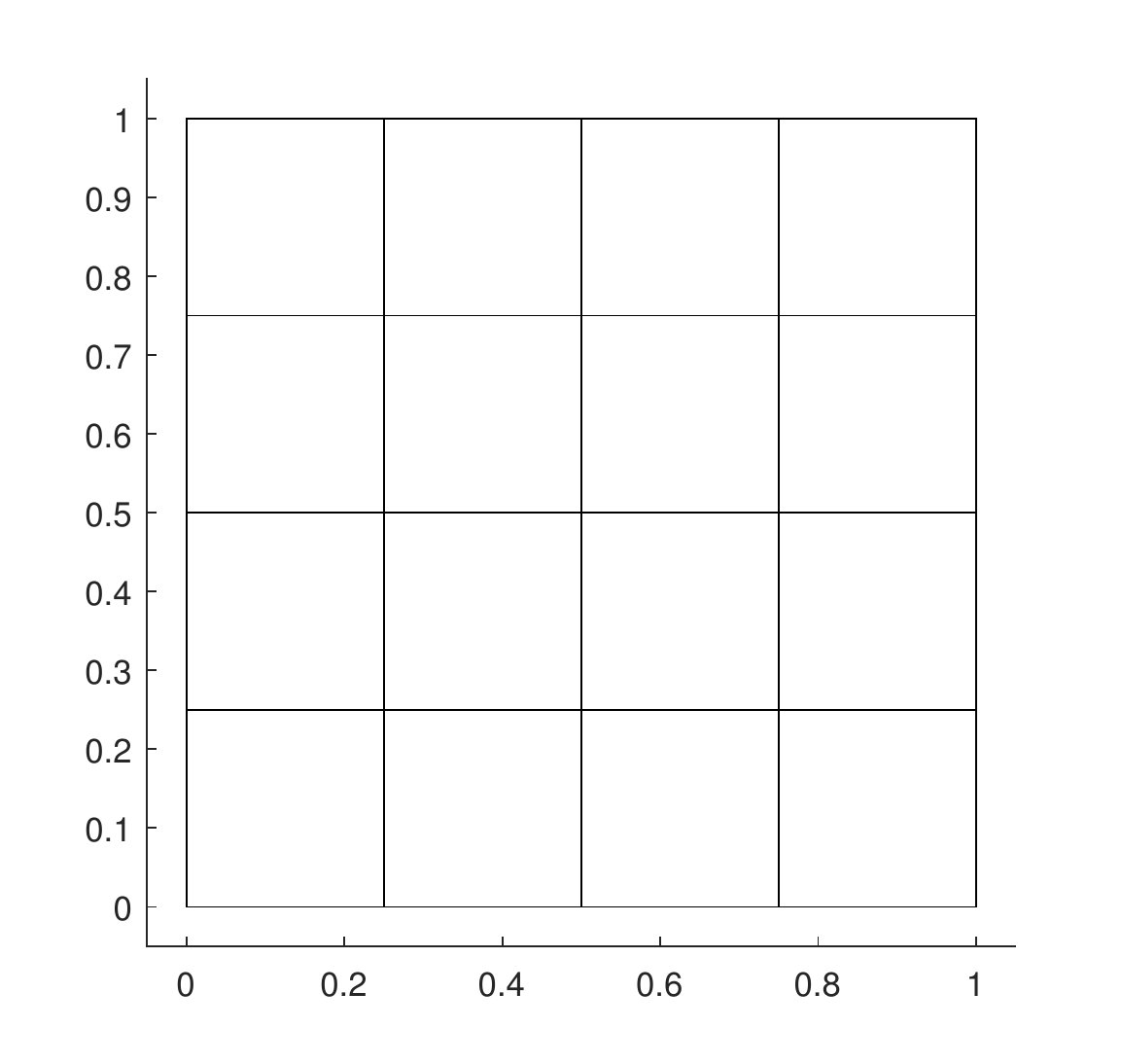}
\end{minipage}
\begin{minipage}{0.335\textwidth}
\includegraphics[width=\textwidth]{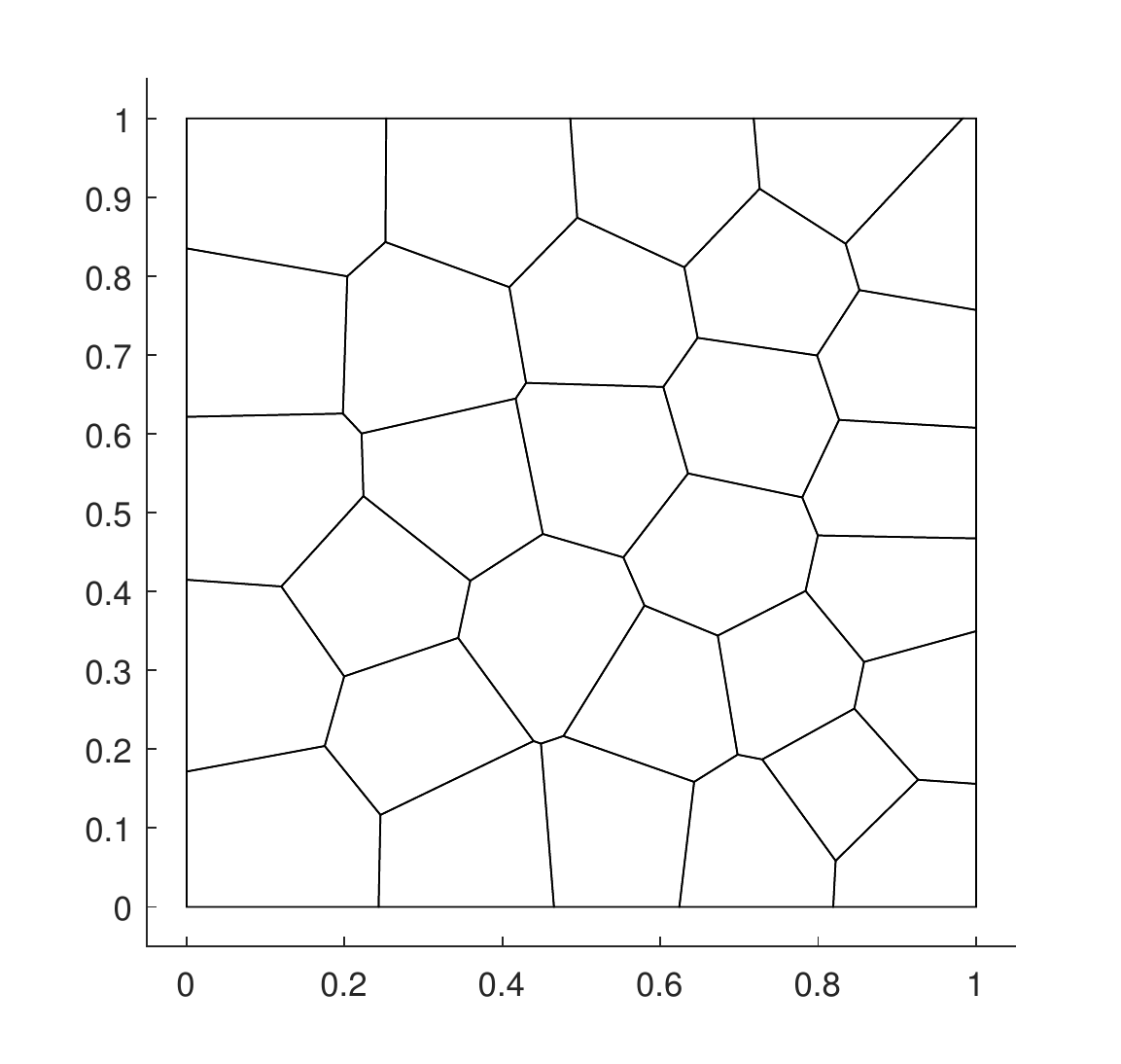}
\end{minipage}
\begin{minipage}{0.31\textwidth}
\includegraphics[width=\textwidth]{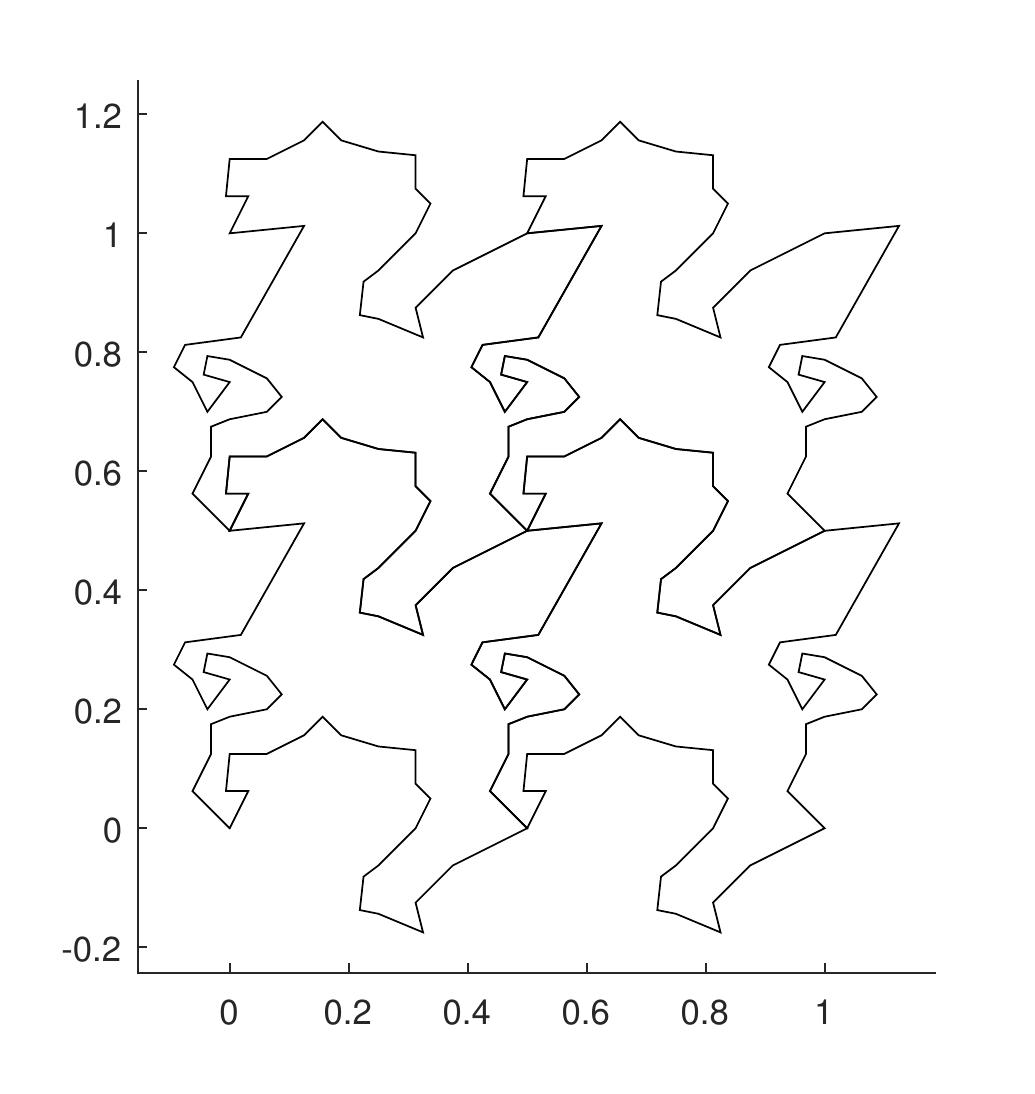}
\end{minipage}
\caption{Different types of meshes: mesh made of squares (left),
  Voronoi-Llyod mesh (center),
and mesh made of Escher horses (right).}
\label{fig:meshes} 
\end{figure}

It is important to note that, since an explicit representation of the numerical approximation $\un$ inside each element is not available, due to the ``virtuality'' of the basis functions,
we cannot compute the $L^2$ and $H^1$ errors of the method directly. Instead, we will compute the following relative errors between $\u$ and $\Pinabla \un$, $\Pinabla$ being defined in \eqref{H1 bulk projector}:
\begin{align} \label{computable errors}
\frac{\lVert \u-\Pinabla \un \rVert_{0,\Omega}}{\Vert \u \Vert_{0,\Omega}}, \quad \quad \quad \quad \quad \frac{\lVert \u-\Pinabla \un \rVert_{1,\taun}}{\Vert \u \Vert_{1,\Omega}}.
\end{align}
We observe that the ``computable'' $H^1$ error in \eqref{computable errors} is related to the exact $H^1$ error. In fact, thanks to Theorem~\ref{theorem abstract error analysis}, we have
\[
\begin{split}
\vert u - \un \vert_{1,\taun} 	& \lesssim \inf_{\qDeltap \in \mathcal{S}^{p,\Delta,-1}(\taun)} \vert \u - \qDeltap \vert_{1,\taun} + \sup_{\vn \in \Vnpz} \frac{\mathcal N_n (\u,\vn)}{\vert \vn \vert_{1,\taun}}  \\
						& \le \vert \u - \Pinabla \un \vert_{1,\taun} + \sup_{\vn \in \Vnpz} \frac{\mathcal N_n (\u,\vn)}{\vert \vn \vert_{1,\taun}};
\end{split}
\]
the convergence of the second term on the right-hand side is provided in Lemma~\ref{lemma approximation non-conforming term}.
Moreover, by the triangle inequality and the stability of the
$H^1$-projection, one also has
\[
\begin{split}
\vert \u - \Pinabla \un \vert_{1,\taun} &\le \vert \u - \Pinabla \u
\vert_{1,\taun}+\vert \Pinabla(\u - \un) \vert_{1,\taun}\\
&\le \vert \u - \Pinabla \u \vert_{1,\taun}+
\vert \u - \un \vert_{1,\taun};
\end{split}
\]
the convergence of the second term on the right-hand side is provided
in Lemma~\ref{lemma approximation harmonic polynomials}.

\subsubsection{Numerical results: $\h$-version} \label{subsection NR h}
In this section, we verify the algebraic rate of convergence of the $\h$-version of the method, validating thus Theorems \ref{theorem h and p VEM} and \ref{theorem L2 estimates}
for different degrees of accuracy $\p=1,2,3,4,5$ of the method.


The numerical results for the problems in $\Omega = (0,1)^2$ with exact solutions $u_1$ and $u_2$, obtained on sequences of Cartesian and Voronoi-Lloyd meshes, are depicted in Figure \ref{fig:hversion_u1} and Figure \ref{fig:hversion_u2}.


\begin{figure}[h]
\begin{center}
\includegraphics[width=.45\textwidth]{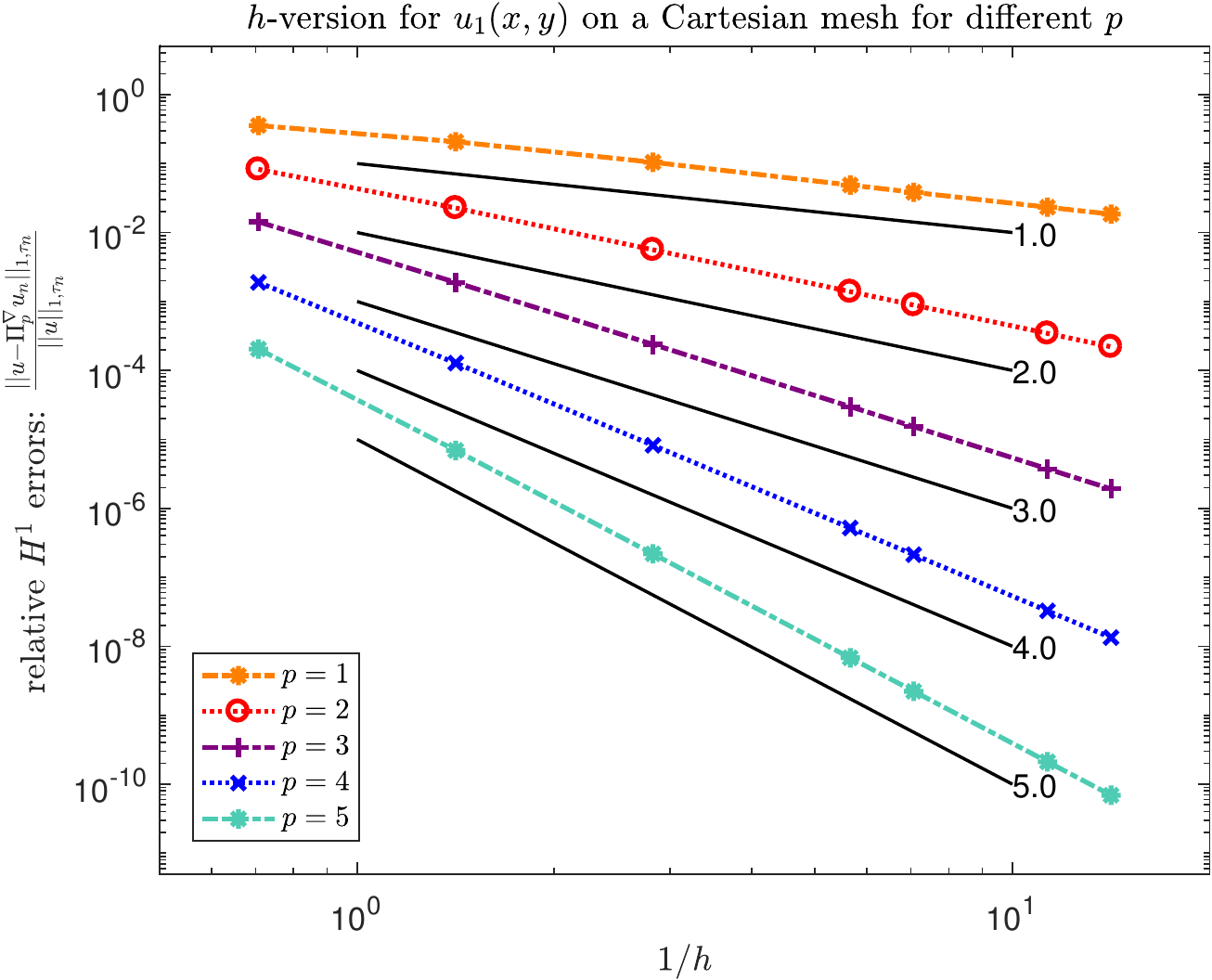}
\hspace{0.05\textwidth}
\includegraphics[width=.45\textwidth]{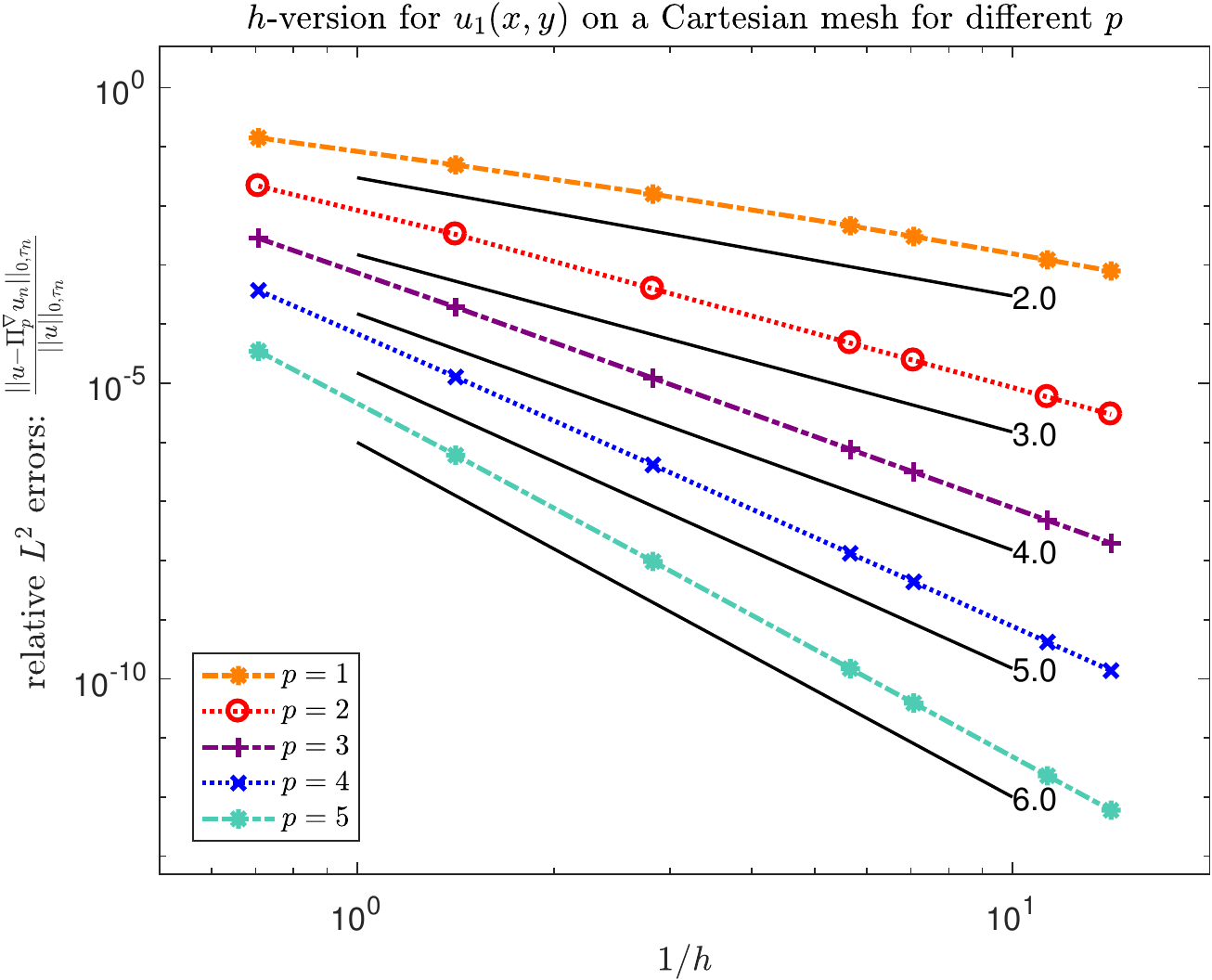} 

\vspace{0.4cm}

\includegraphics[width=.45\textwidth]{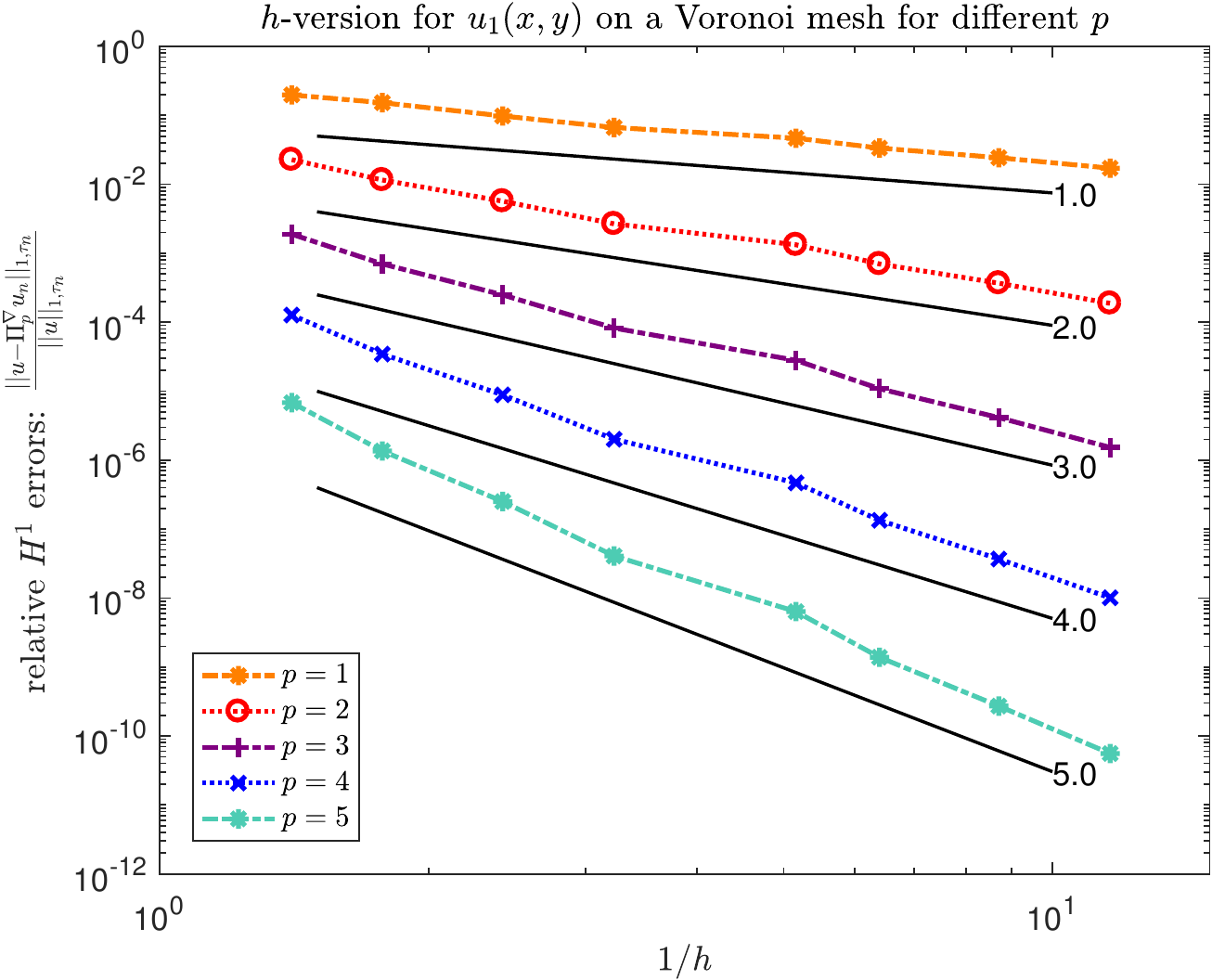}
\hspace{0.05\textwidth}
\includegraphics[width=.45\textwidth]{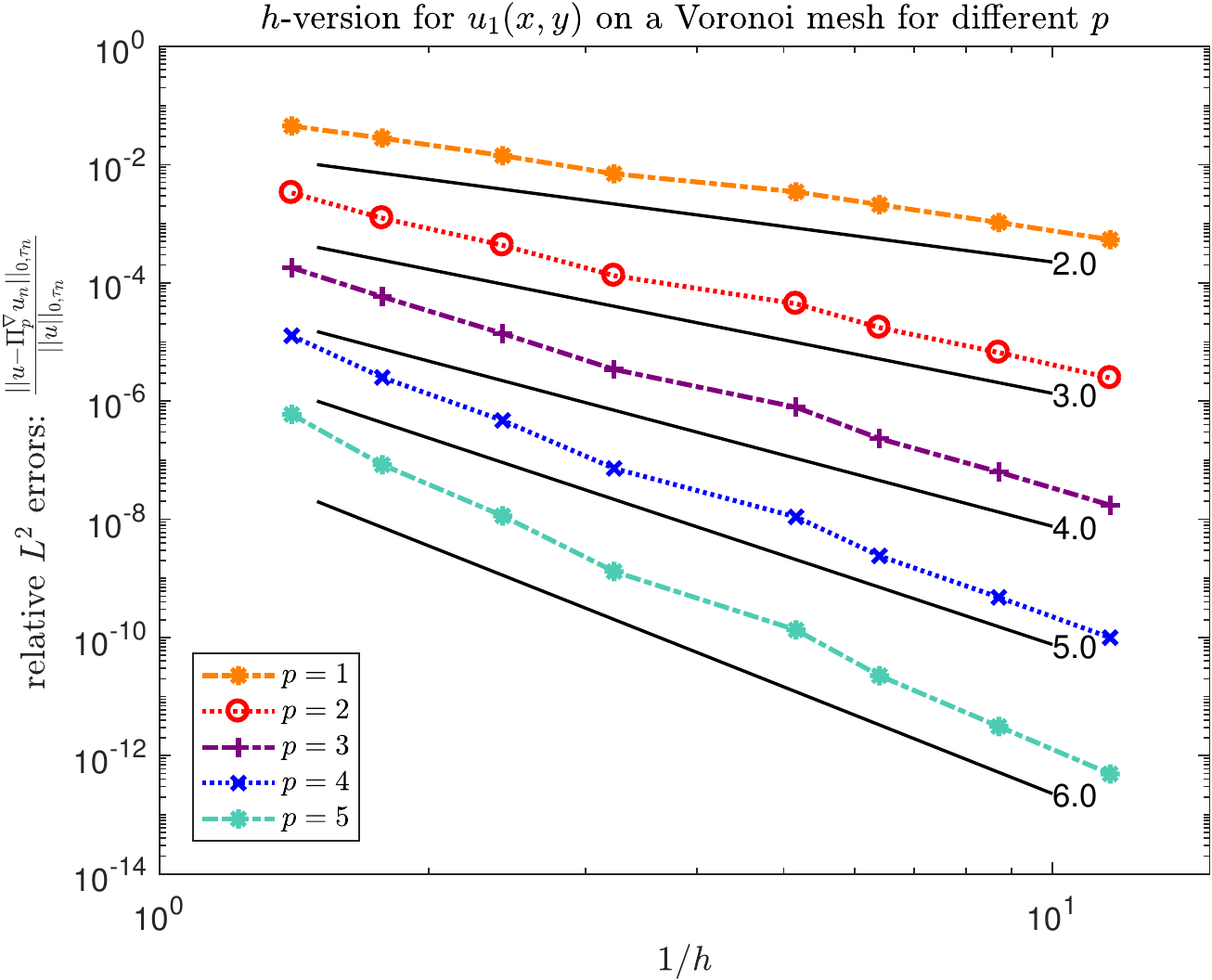}
\end{center}
\caption{Convergence of the $h$-version of the method for the analytic solution $u_1$ on quasi-uniform Cartesian (first row) and Voronoi-Lloyd (second row) meshes; relative $H^1$ errors (left) and relative $L^2$ errors (right) defined in \eqref{computable errors}.}
\label{fig:hversion_u1} 
\end{figure}

\begin{figure}[h]
\begin{center}
\includegraphics[width=.45\textwidth]{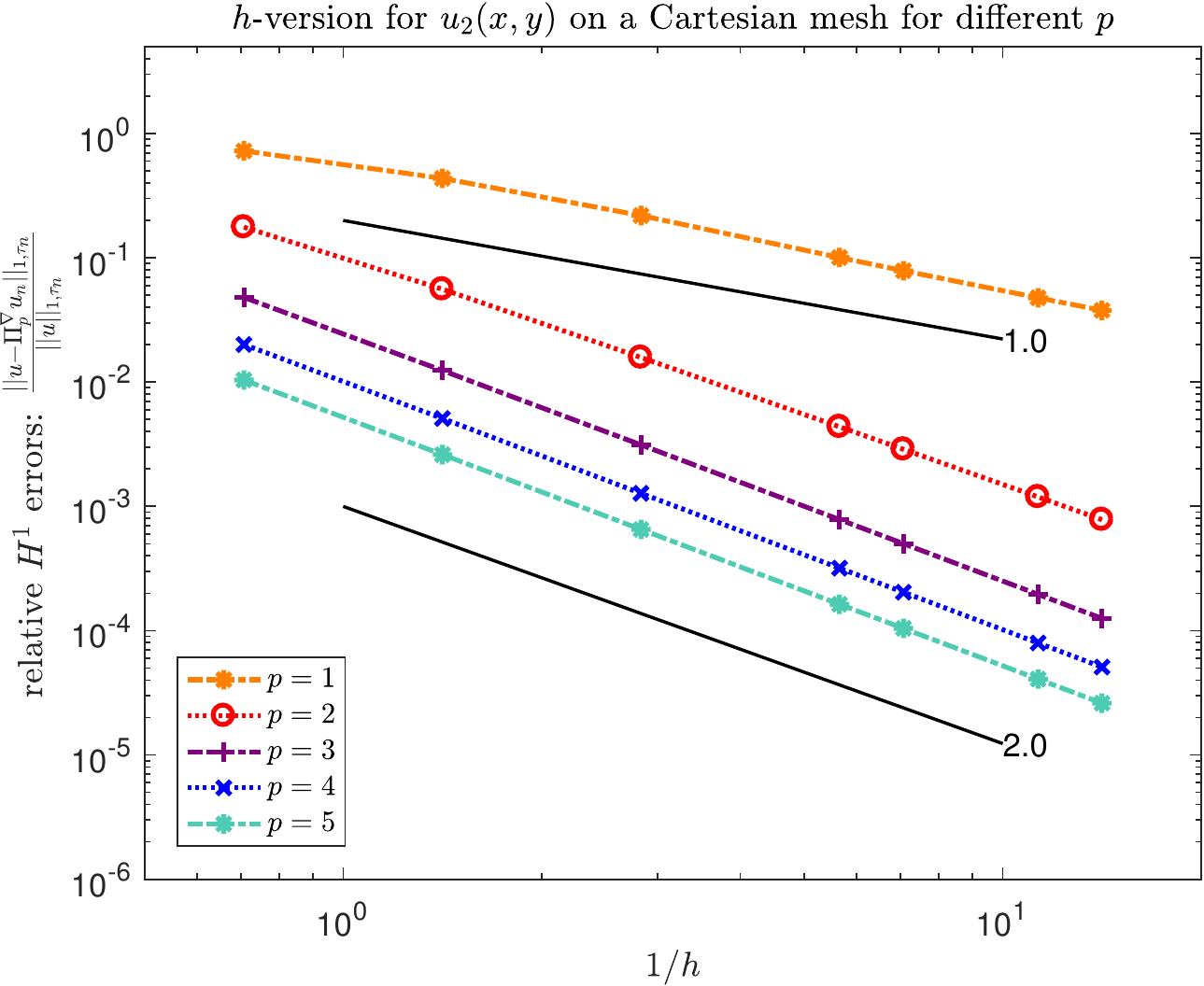}
\hspace{0.05\textwidth}
\includegraphics[width=.45\textwidth]{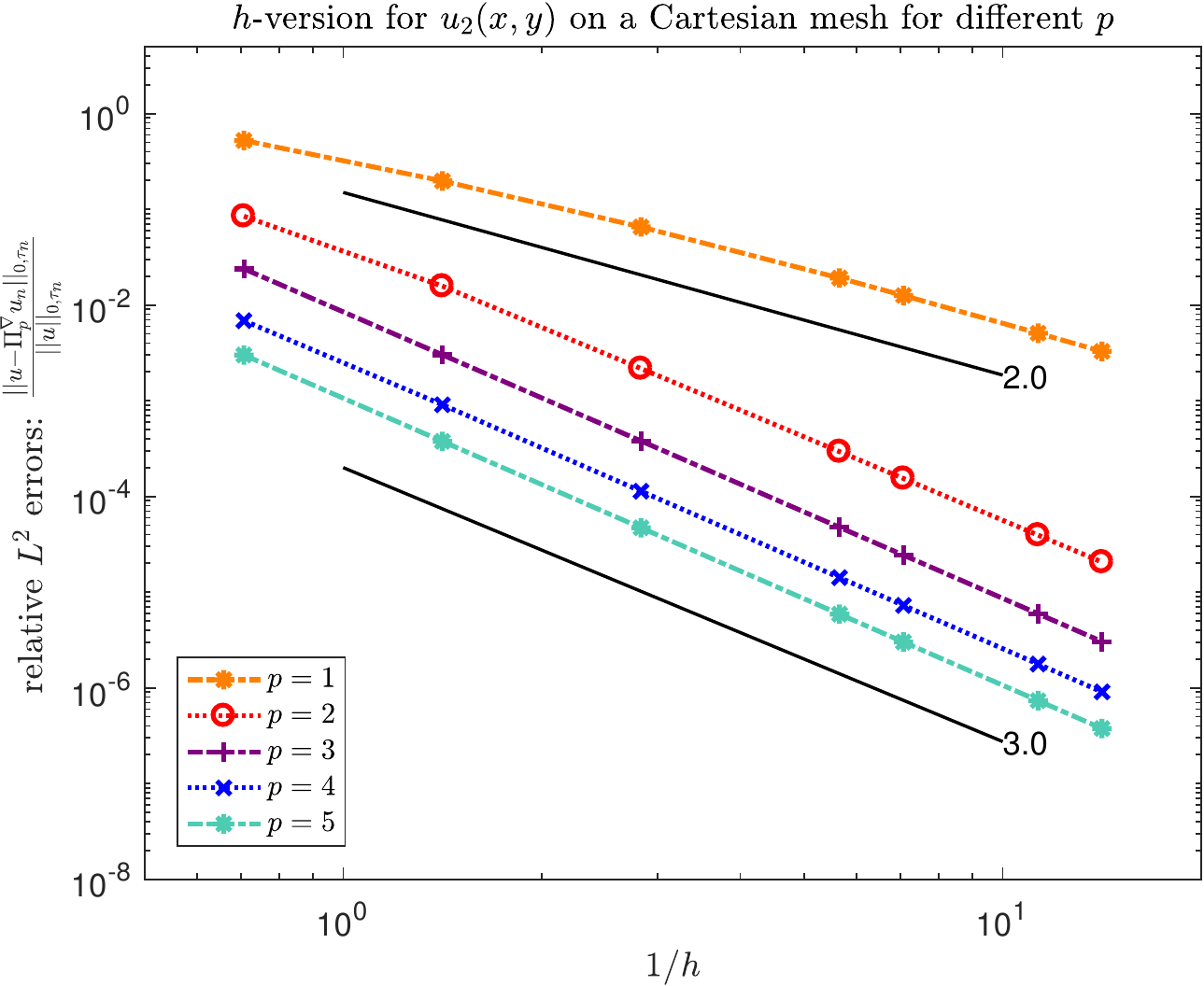} 
	
\vspace{0.4cm}
	
\includegraphics[width=.45\textwidth]{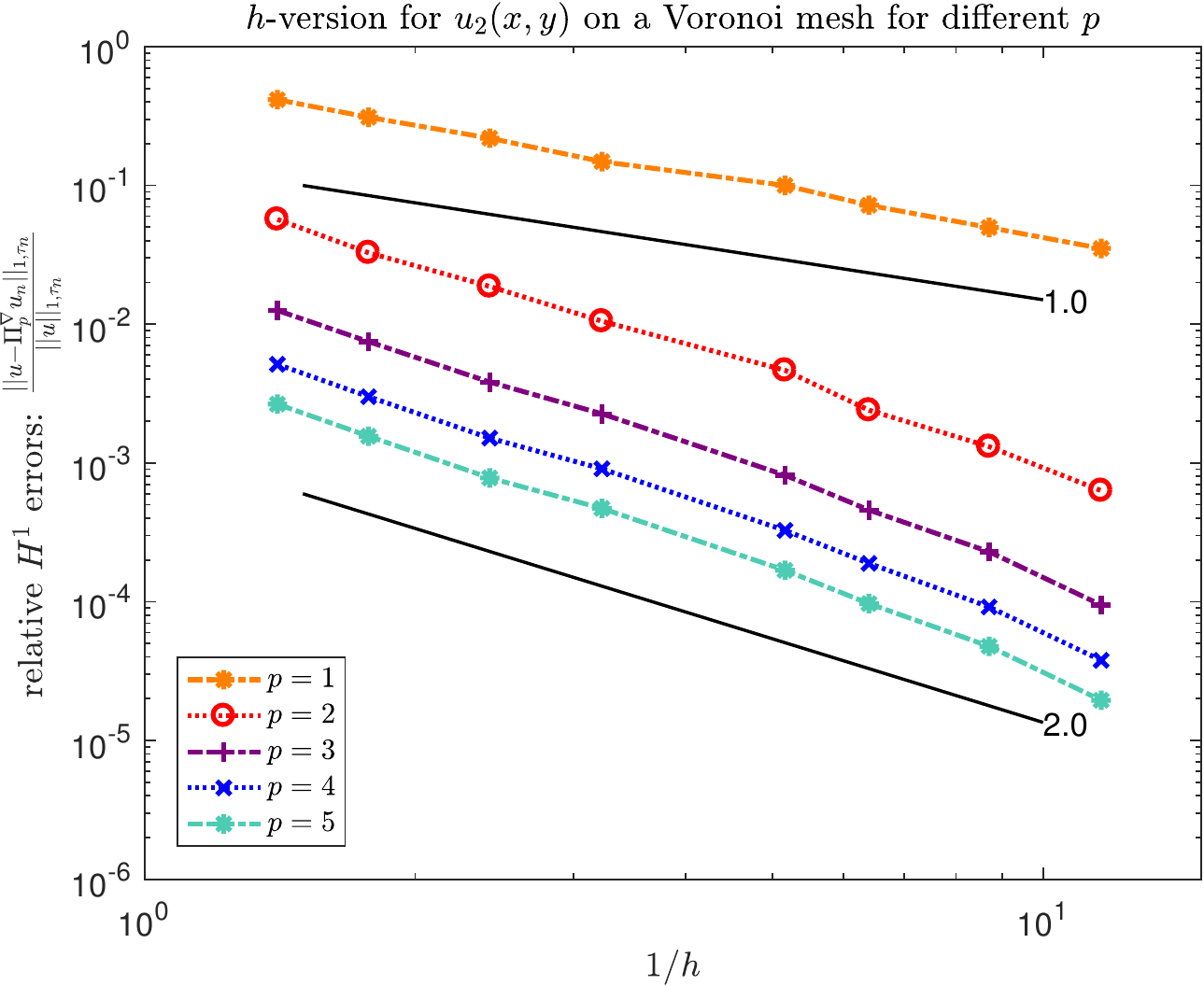}
\hspace{0.05\textwidth}
\includegraphics[width=.45\textwidth]{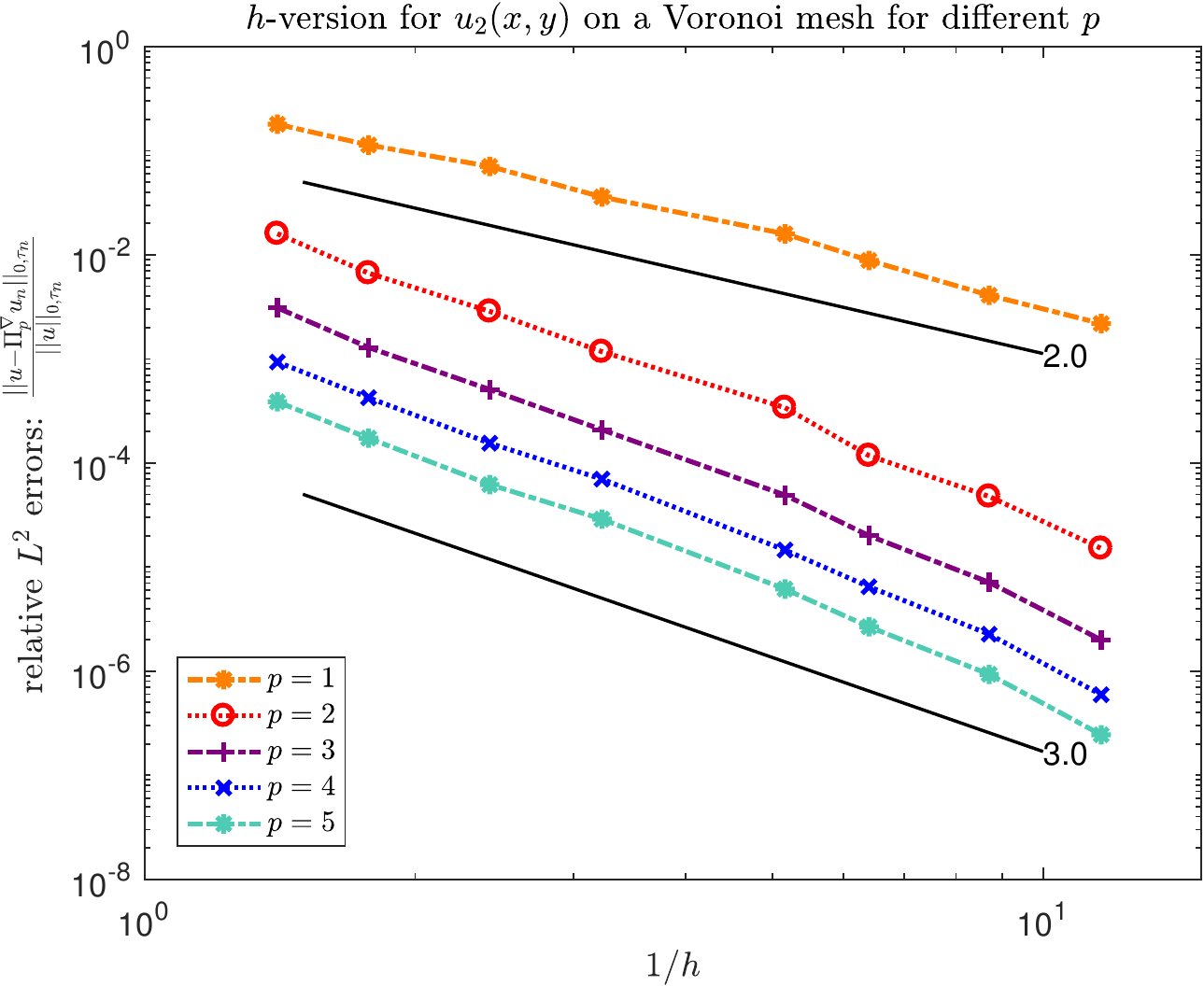}
\end{center}
\caption{Convergence of the $h$-version of the method for the solution $u_2$ with finite Sobolev regularity on quasi-uniform Cartesian (first row) and Voronoi-Lloyd (second row) meshes; relative $H^1$ errors (left) and relative $L^2$ errors (right) defined in \eqref{computable errors}.}
\label{fig:hversion_u2} 
\end{figure}

From Theorems \ref{theorem h and p VEM} and \ref{theorem L2 estimates}, we expect the $H^1$ and $L^2$ errors to behave like 
$\mathcal{O}(\h^{\min(t,\p)})$ and $\mathcal{O}(\h^{\min(t,\p)+1})$, respectively,
where $t+1$ is the regularity of the exact solution $u$, and $p$ is
the degree of accuracy. 
The numerical results in Figures \ref{fig:hversion_u1} and
\ref{fig:hversion_u2} are in agreement with these theoretical
estimates. In fact, for $u_1$, which belongs to $H^s(\Omega)$
for all $s \geqslant 0$, we see that the $H^1$ error
actually converges with order $\mathcal{O}(h^{p})$, and the $L^2$
error with order $\mathcal{O}(h^{p+1})$ for all degrees of accuracy.
On the other hand, 
we observe convergence rates $1$ and $2$,
respectively, for $p=1$, and convergence rates $2$ and $3$,
respectively, for $p=2,3,4,5$. This is due to the fact that the expected convergence is of order 
$\mathcal{O}(h^{\min\{2-\epsilon,p\}})$ in the $H^1$ norm and
$\mathcal{O}(h^{\min\{2-\epsilon,p\}+1})$ in the $L^2$ norm.

\subsubsection{Numerical results: $\p$-version} \label{subsection NR p}

In this section, we validate the exponential convergence of the $\p$-version of the method for the model problem \eqref{Laplace problem strong formulation}
with exact solution $u_1$ on $\Omega=(0,1)^2$ on a Cartesian mesh and a Voronoi mesh made of four elements, respectively, as well as on the domain
$\Omega$ given by the union of four Escher horses (see Figure
\ref{fig:meshes}, right).
The obtained results are depicted in Figure
\ref{fig:pversion_u1}, where
the logarithm of the relative errors defined in \eqref{computable errors} is plotted against the polynomial degree $p$.

\begin{figure}[h]
\begin{center}
\includegraphics[width=.45\textwidth]{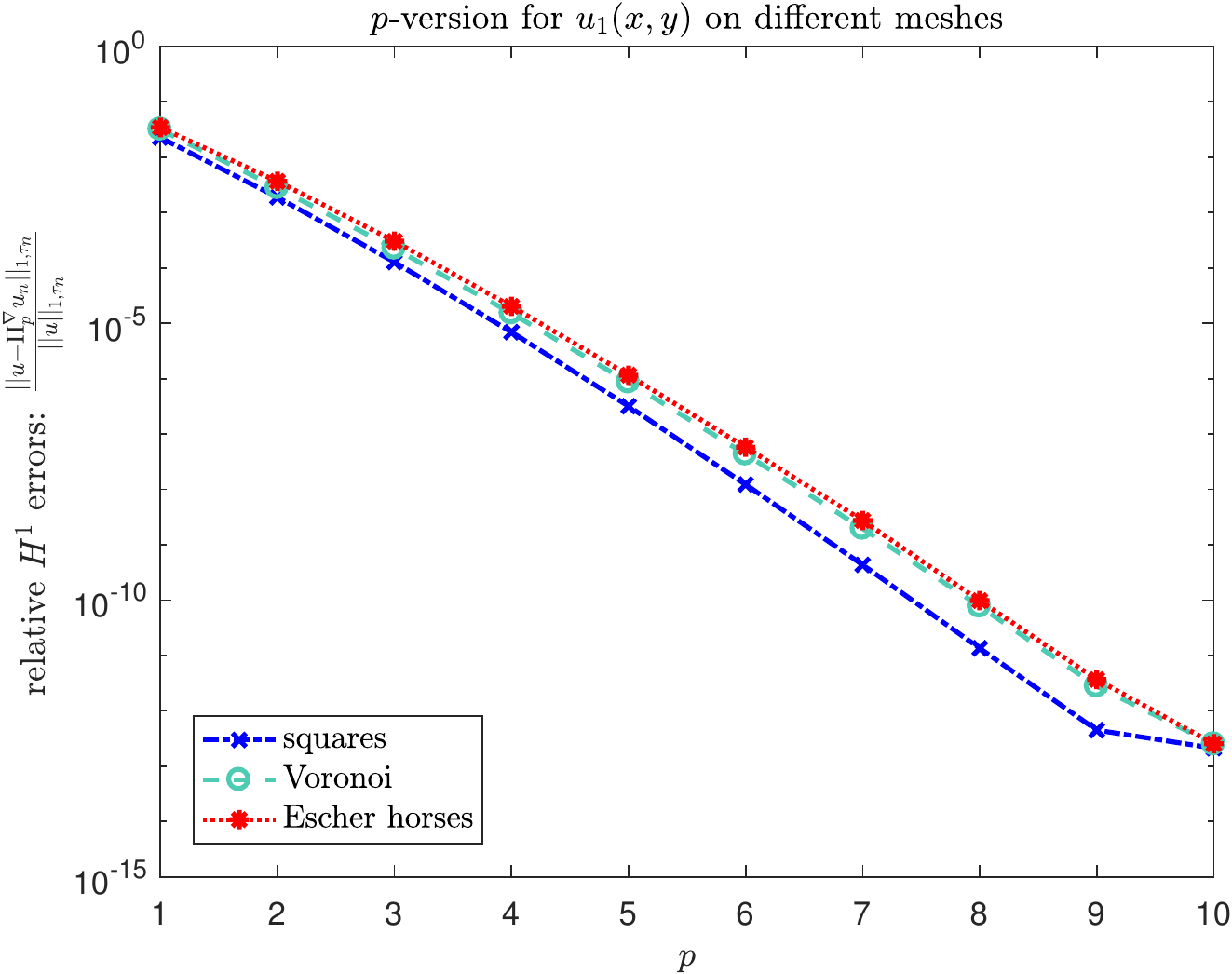}
\hspace{0.05\textwidth}
\includegraphics[width=.45\textwidth]{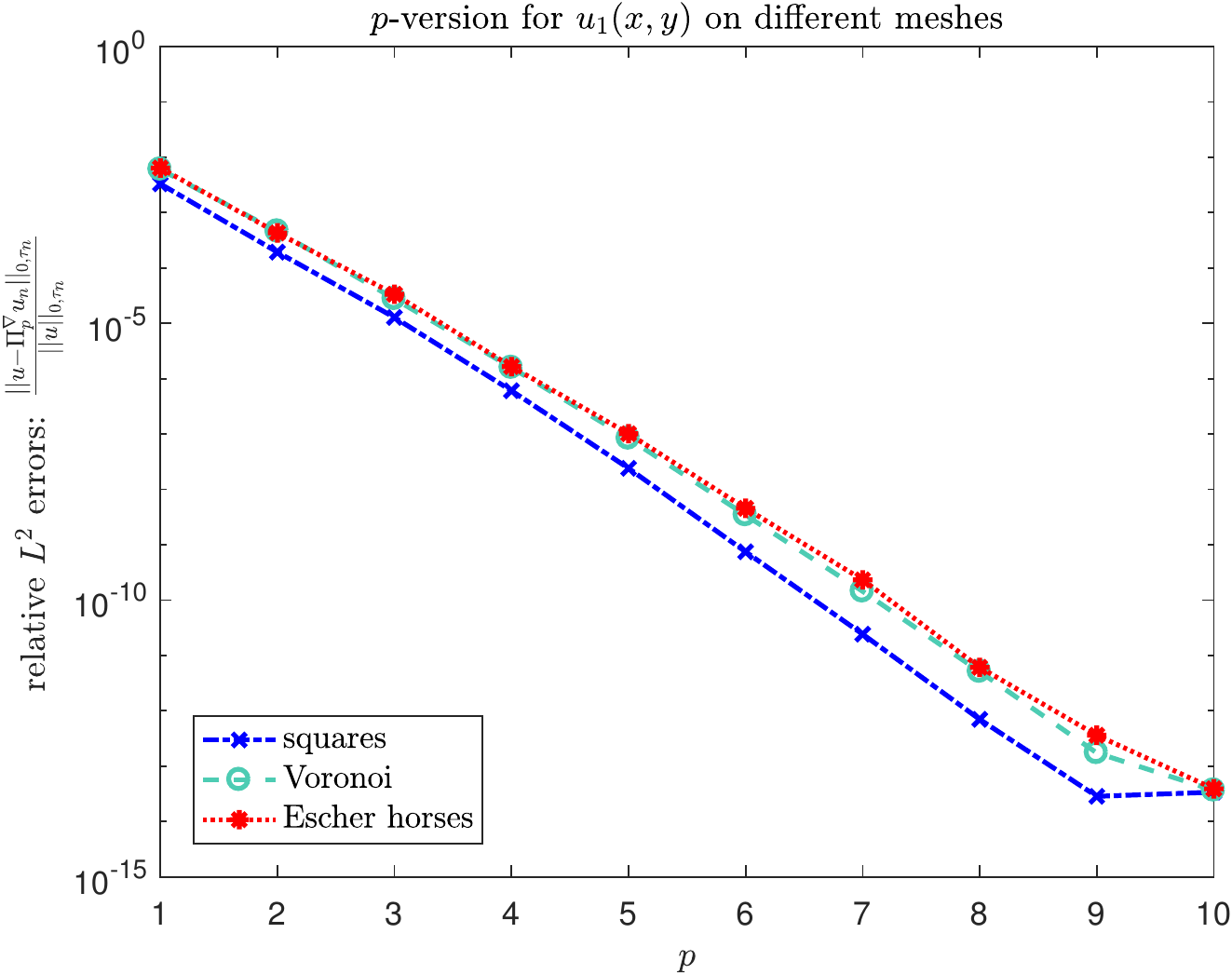} 
\end{center}
\caption{Convergence of the $\p$-version of the method for the analytic solution $u_1$ on a quasi-uniform Cartesian mesh, a Voronoi-Lloyd mesh, and a Escher horses mesh; relative $H^1$ errors (left) and relative $L^2$ errors (right) defined in \eqref{computable errors}.}
\label{fig:pversion_u1} 
\end{figure}
%
One can clearly observe that the exponential convergence 
predicted in Theorem \ref{theorem exponential convergence analytic solution}
is attained, even when employing a very coarse mesh with (non-convex)
non-star-shaped elements, as the one in Figure \ref{fig:meshes}, right.

\subsection{The $\h\p$-version and approximation of corner singularities} \label{subsection NR hp}
So far, both the theoretical analysis and the numerical tests were performed
considering approximation spaces with uniform degree of accuracy $\p$ and with quasi-uniform meshes.

In general, however, the solutions to elliptic problems over polygonal
domains have natural singularities arising in neighbourhoods of the
corners of the domain. 
In particular, for problem \eqref{Laplace problem weak formulation} in a domain $\Omega$ with reentrant corners,
the solution might have a regularity lower than $H^2$, even if the Dirichlet boundary datum $\g$ is smooth;
for a precise functional setting regarding regularity of solutions to elliptic PDEs,
we refer to \cite{grisvard, babuvska1988regularity, SchwabpandhpFEM} and the references therein.
%
%
%
This implies that both the $\h$- and the $\p$-versions of standard Galerkin methods, in general, have limited
approximation properties. In 
particular, employing quasi-uniform meshes and uniform degree of
accuracy, does not entail any sort of exponential convergence.

A possible way to recover exponential convergence, even in presence of corner singularities, is to use the so-called $\h\p$-strategy
firstly designed by Babu\v ska and Guo \cite{BabuGuo_hpFEM, babuskaguo_curvilinearhpFEM, babuvska1988regularity} in the FEM framework, and then generalized to the VEM in \cite{hpVEMcorner}.
This strategy consists in combining mesh refinement towards the corners of the domain and increasing the number of degrees of freedom over the
polygonal decomposition in a non-uniform way.
In this section, we discuss and numerically test an $hp$-version of the presented non-conforming harmonic VEM.

To this purpose, we recall the concept of sequences of geometrically graded polygonal meshes $\{\taun\}_{n\in \mathbb N}$.
For a given $n \in \mathbb N$, $\taun$ is a polygonal mesh consisting of $n+1$ {layers}, where we define a \textit{layer} as follows.
The so-called $0$-th {layer} is the set of all polygons in $\taun$ abutting the vertices of $\Omega$. The other layers are defined inductively by requiring that the $\ell$-th {layer} consists of those polygons, which abut the polygons in
the ($\ell-1$)-th layer. More precisely, for all $\ell =1,\dots,n$, we set
\begin{equation*} 
\L_{n,\ell} := \L_\ell := \left\{ \E \in \taun \mid \overline \E \cap \overline{\E_{\ell-1}} \ne \emptyset \text{ for some } \E_{\ell-1} \in \L_{\ell-1},\, \E \not \subseteq \cup_{j=0}^{\ell-1} L_j \right\}.
\end{equation*}
The $\h\p$-gospel states that, in order to achieve exponential convergence of the error, one has to employ geometrically graded sequences of meshes.
For this reason, we consider sequences $\{\taun\}_{n\in \mathbb N}$ satisfying (\textbf{D1})-(\textbf{D3}), but not (\textbf{D4}); we require instead
\begin{itemize}
\item[(\textbf{D5})]  for all $n\in \mathbb N$, there exists $\sigma \in (0,1)$, called \textit{grading parameter}, such that
\begin{equation} \label{grading assumption}
\hE \approx
\begin{cases}
\sigma^{n} & \text{if } \E \in \L_0\\
\frac{1-\sigma}{\sigma} \dist(\E, \mathcal V^\Omega) & \text{if } \E\in \L_\ell,\, \ell=1,\dots,n,
\end{cases}
\end{equation}
where $\mathcal V^\Omega$ denotes the set of vertices of the polygonal domain $\Omega$.
\end{itemize}
Sequences $\{\taun\}_{n\in \mathbb N}$ satisfying (\textbf{D5}) have the property that the layers ``near'' the corners of the domain consist of elements with measure converging to zero, whereas the other layers consist of polygons with fixed size.
In Figure \ref{figure example grading meshes}, we depict three meshes
that represent the third elements $\mathcal T_3$ in certain sequences
of meshes 
of the L-shaped domain
\begin{equation} \label{L shaped domain}
\Omega := (-1,1)^2\setminus (-1,0)^2,
\end{equation}
which are graded, for simplicity, only towards the vertex $\mathbf 0$.

\begin{figure}[htbp]
\begin{minipage}{0.33\textwidth} 
\includegraphics[width=\textwidth]{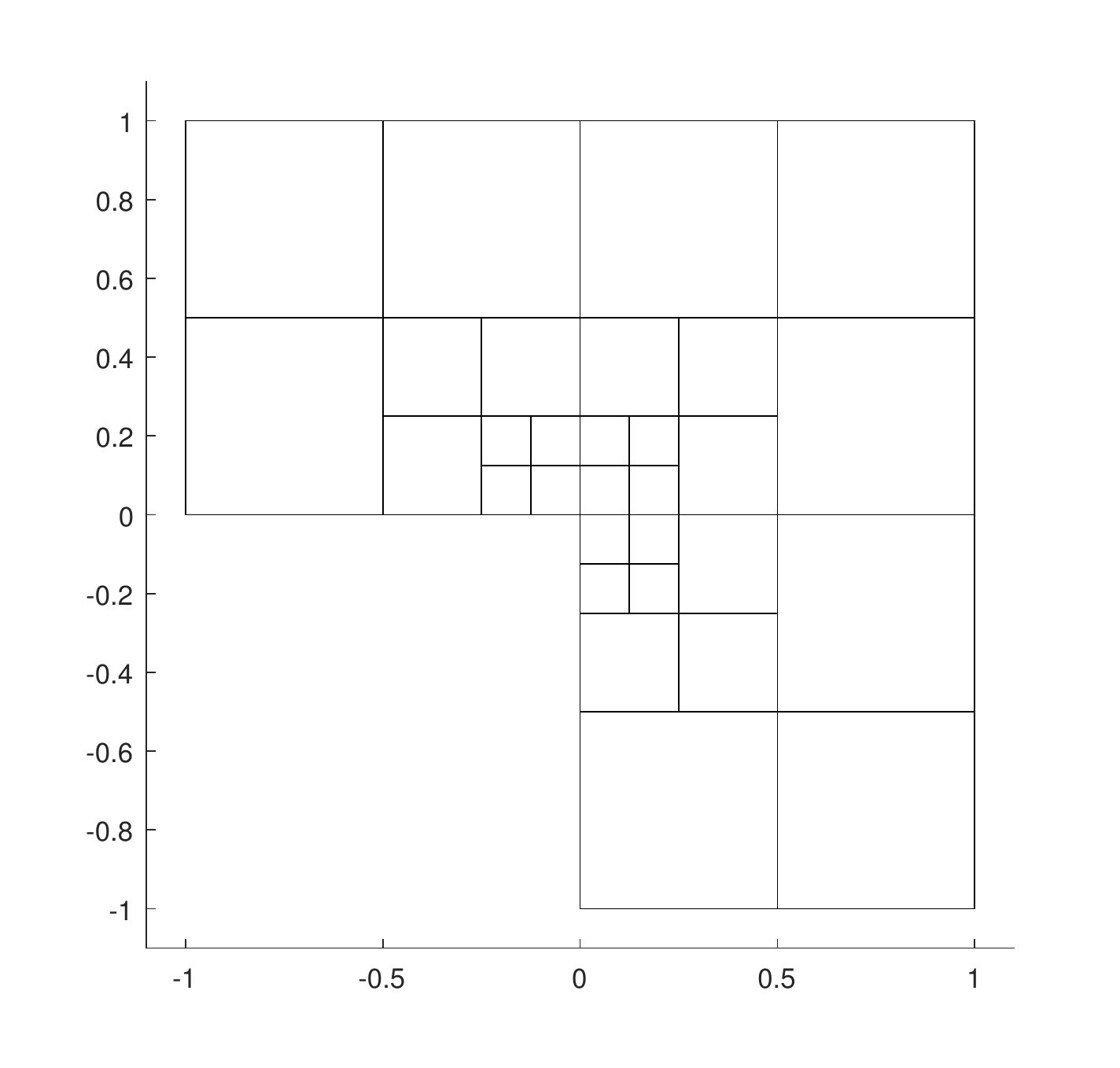}
\end{minipage}
\begin{minipage}{0.33\textwidth}
\includegraphics[width=\textwidth]{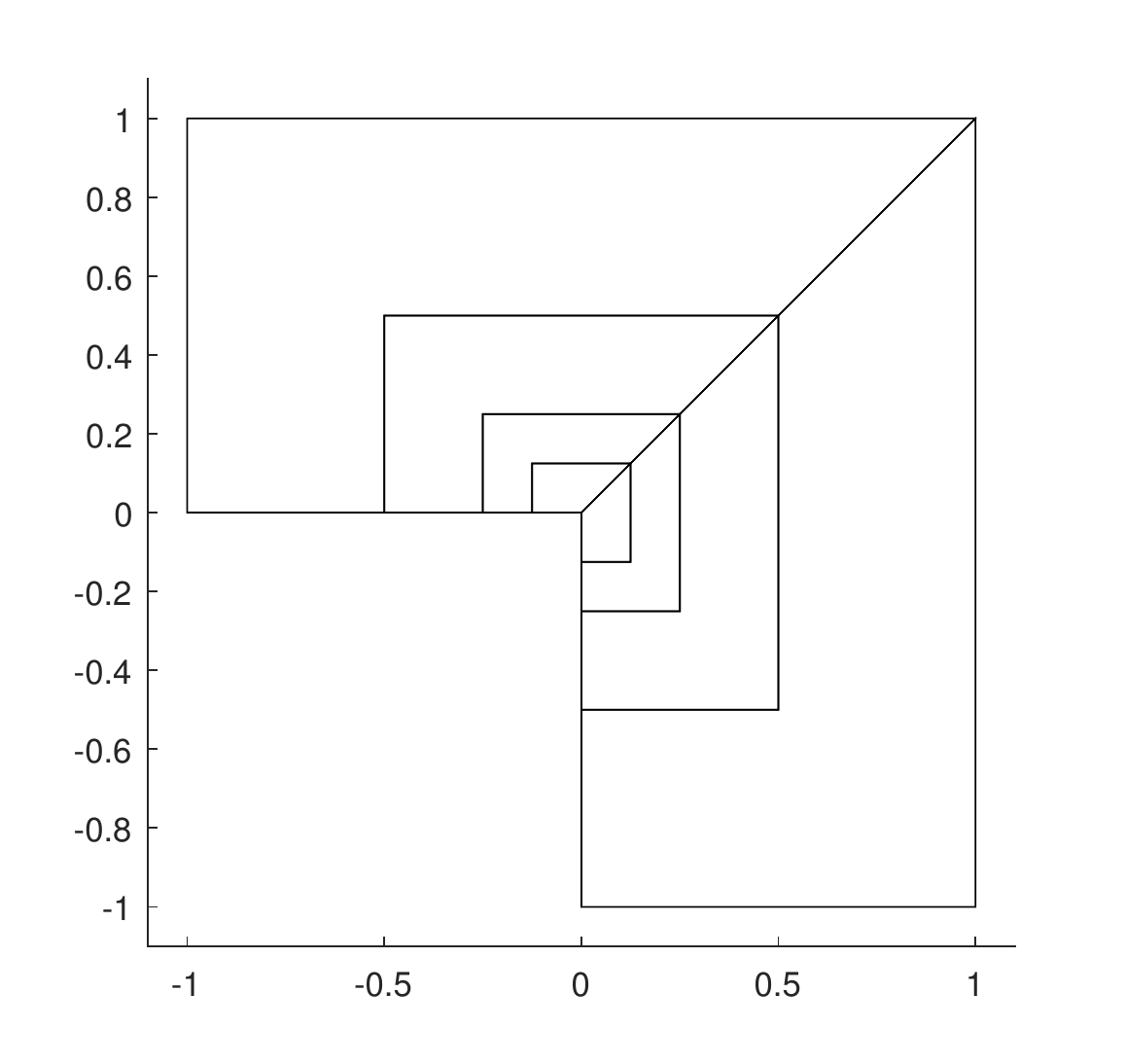}
\end{minipage}
\begin{minipage}{0.33\textwidth}
\includegraphics[width=\textwidth]{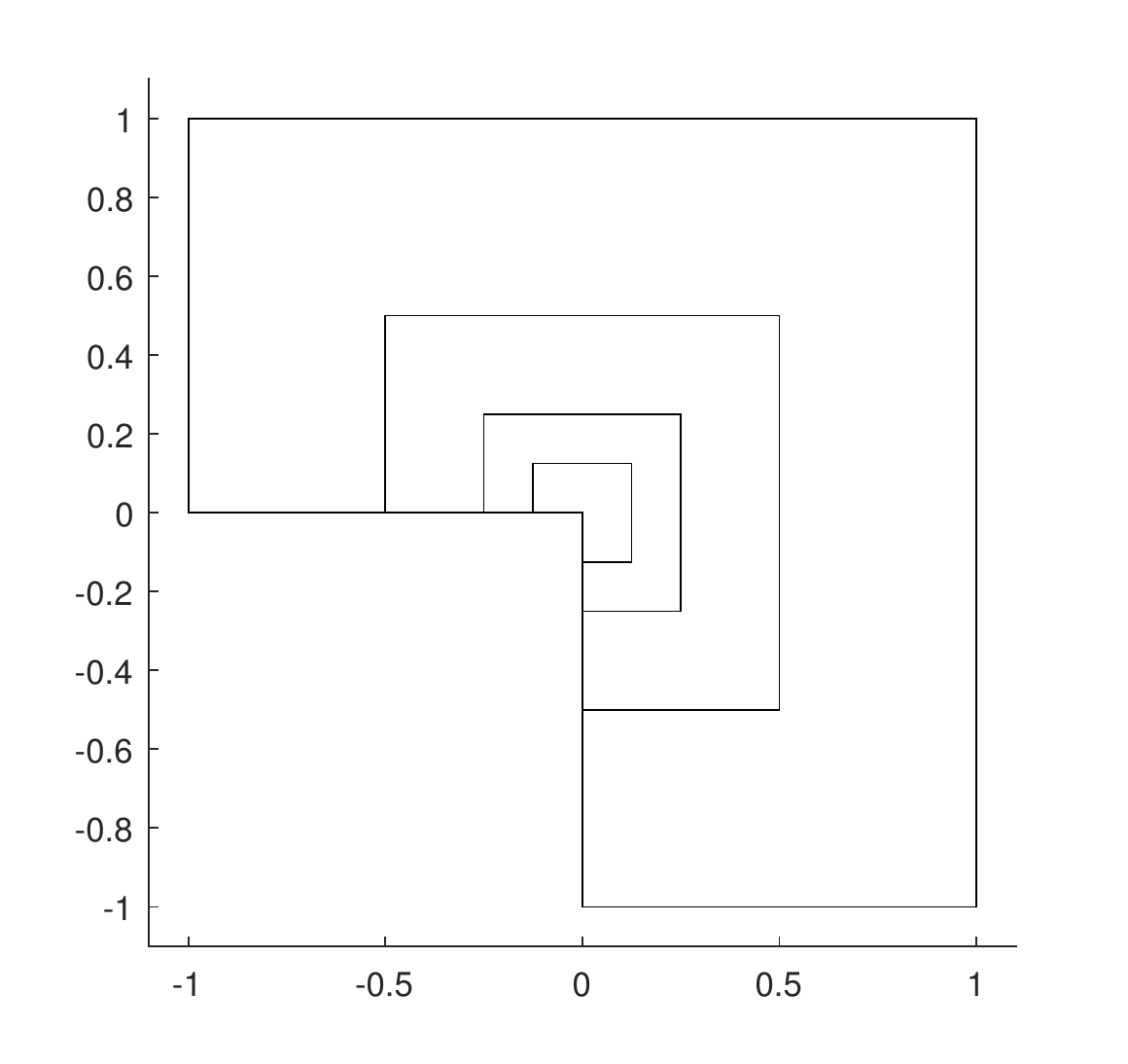}
\end{minipage}
\caption{Third element $\mathcal T_3$ in three different sequences of geometrically graded meshes (type (a)-(c) from left to right) with $\sigma=0.5$.}
\label{figure example grading meshes} 
\end{figure}

We still miss a crucial ingredient for a complete description of the $\h\p$-strategy, namely harmonic VE spaces with non-uniform degrees of accuracy.
For all $n\in \mathbb N$, we can order the elements in $\taun$ as $\E_1$, $\E_2$, \dots, $\E_{\card(\taun)}$; then we consider a vector $\pbold_n \in \mathbb N^{\card(\taun)}$ whose entries are defined as follows:
\begin{equation} \label{graded degree of accuracy}
(\pbold_n)_j :=
\begin{cases}
1 & \text{if } \E_j \in L_0\\
\max(1, \lceil \mu(\ell+1) \rceil) & \text{if } \E_j \in \L_\ell,\, \ell =  1,\dots,n,\\
\end{cases}
\end{equation}
where $\mu$ is a positive parameter to be assigned, and where $\lceil \cdot \rceil$ is the ceiling function.

Having $\pbold_n$ for all $n\in \mathbb N$, we consider the elements $\e_1$, $\e_2$, \dots, $\e_{\card(\mathcal E_n)}$ in $\mathcal E_n$;
we consequently define a vector $\pboldE_n \in \mathbb N^{\card(\mathcal E_n)}$, whose entries are built using the following rule (\emph{maximum rule}):
\begin{equation*} 
(\pboldE_n)_j :=
\begin{cases}
(\pbold_n)_{i} 					& \text{if } \e_j \in \mathcal E_n^B \text{ and } \e_j \subset \partial\E_i\\
\max((\pbold_n)_{i_1}, (\pbold_n)_{i_2}) 	& \text{if } \e_j \in \mathcal E_n^I \text{ and } \e_j \subset \partial \E_{i_1} \cap \partial \E_{i_2}.\\
\end{cases}
\end{equation*}
At this point, we define the local harmonic VE spaces with non-uniform degrees of accuracy as follows. For all $\E \in \taun$, we set
\[
\VDeltaE := \left\{ \vn \in H^1{(\E)} \, \mid \, \Delta \vn = 0 \text{ in } \E,\, (\nabla\vn\cdot\n_\E) _{|_{\e_j}} \in \mathbb P_{(\pboldE_n)_j}(\e_j) \, \forall \e_j \text{ edge of } \E  \right\}.
\]
The global non-conforming space and the set of global degrees of freedom are defined similarly to those for the case of uniform degree, see Section \ref{section VEM}.
The difference is that now the degrees of freedom and the corresponding ``level of non-conformity'' of the method vary from edge to edge.
This approach is similar to that discussed in~\cite{hpVEMcorner} for the $\h\p$-version of the conforming standard VEM.

Under this construction, one should be able to prove the following
convergence result 
in terms of the number of degrees of freedom.
There exists $\mu >0$ such that the choice \eqref{graded degree of accuracy} guarantees
\begin{equation} \label{exponential convergence}
\vert \u - \un \vert_{1,\taun} \le c \exp{\left(-b \sqrt[2]{\# \text{dofs} }\right)},
\end{equation}
for some positive constants $b$ and $c$, depending on $u$, $\rho_1$, $\rho_2$, $\Lambda$, and $\sigma$, where $\# \text{dofs}$ denotes the number of degrees of freedom of the discretization space. This exponential convergence in terms of the dimension of the
approximation space was proven for conforming harmonic VEM in
\cite{HarmonicVEM} and for Trefftz DG-FEM in \cite{hmps_harmonicpolynomialsapproximationandTrefftzhpdgFEM}.
In the present non-conforming harmonic VEM, the setting of the
proof of such exponential convergence would follow 
the same lines as that of the two methods mentioned above. We omit
a detailed analysis and
present here some numerical results.

We underline that the exponential convergence in \eqref{exponential convergence} is faster (in terms of the dimension of the space) than that of
standard $\h\p$-FEM \cite{SchwabpandhpFEM} and $\h\p$-VEM \cite{hpVEMcorner}, whose decay rate is $\mathcal{O}(\exp{ \left(-b\sqrt[3]{\# \text{dofs}} \right)})$,
due to the use of harmonic subspaces instead of complete FE or
  VE spaces.

For our numerical tests, we consider the boundary value problems
\eqref{Laplace problem weak formulation} on the L-shaped domain~$\Omega$ defined in \eqref{L shaped domain}, with exact solution
\begin{align*}
u_3(x,y)=u_3(r,\theta)=r^{\frac{2}{3}} \sin \left(\frac{2}{3} \theta+\frac{\pi}{3} \right).
\end{align*}
We note that $u_3 \in H^{\frac{5}{3}-\epsilon}(\Omega)$ for every $\epsilon>0$ arbitrarily small, and also $u_3 \in H^{\frac{5}{3}-\epsilon}(\Omegaext)$,
where~$\Omegaext$ is defined in \eqref{inflated domain}; we stress that $u_3$ is the natural solution, singular at $\mathbf 0=(0,0)$, which arises when solving a Poisson
problem in the L-shaped domain~$\Omega$.

In Figure \ref{fig:hpversion_u2}, we show the convergence of the $\h\p$-version of the method for different values of the grading parameter $\sigma$ used in \eqref{grading assumption} and with degrees of accuracy graded according to \eqref{graded degree of accuracy}, having set $\mu =1$.
We plot the logarithm of the relative $H^1$ error \eqref{computable errors} against 
the square root of the number of degrees of freedom. 

Note that, due to the different number of degrees of freedom for each type of mesh, the range of the coordinates varies from plot to plot.
The straight lines for $\sigma=0.5$ and $\sigma=\sqrt{2}-1$ indicate agreement with \eqref{exponential convergence} for meshes of type (a) and (b).
However, when employing the mesh of type~(c) with all grading parameters,
and when employing grading parameter $\sigma=(\sqrt{2}-1)^2$ for meshes of all types,
we do not observe exponential convergence \eqref{exponential convergence}. In the former case, we deem that this is due to the shape of the elements, whereas, in the latter,
this could be due to the fact that the size of the elements in the outer layers is too large if picking the parameter $\mu$ in \eqref{graded degree of accuracy} equal to~$1$.
\begin{figure}[htbp]
\begin{minipage}{0.325\textwidth}
\includegraphics[width=\textwidth]{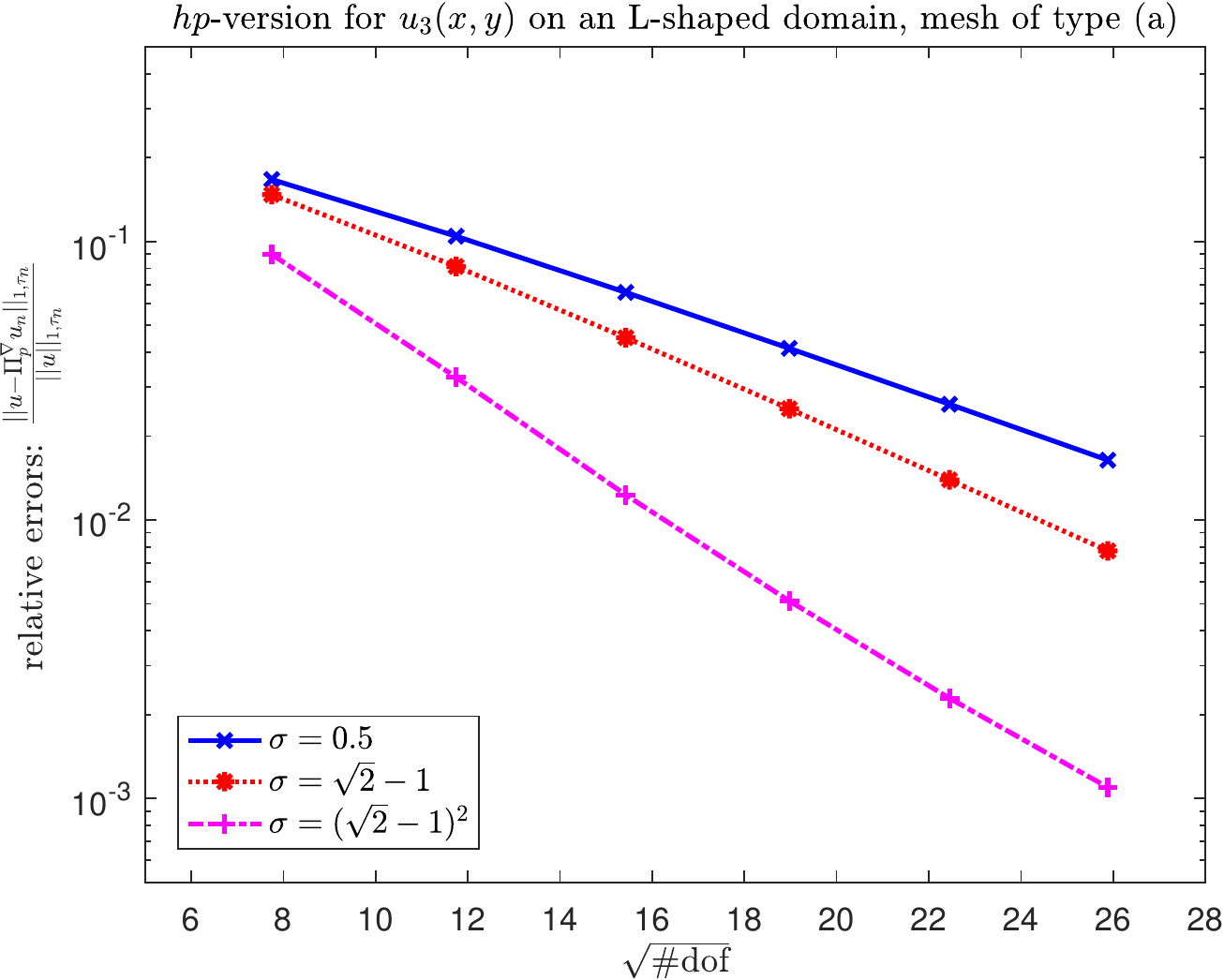}
\end{minipage}
\hfill
\begin{minipage}{0.325\textwidth} 
\includegraphics[width=\textwidth]{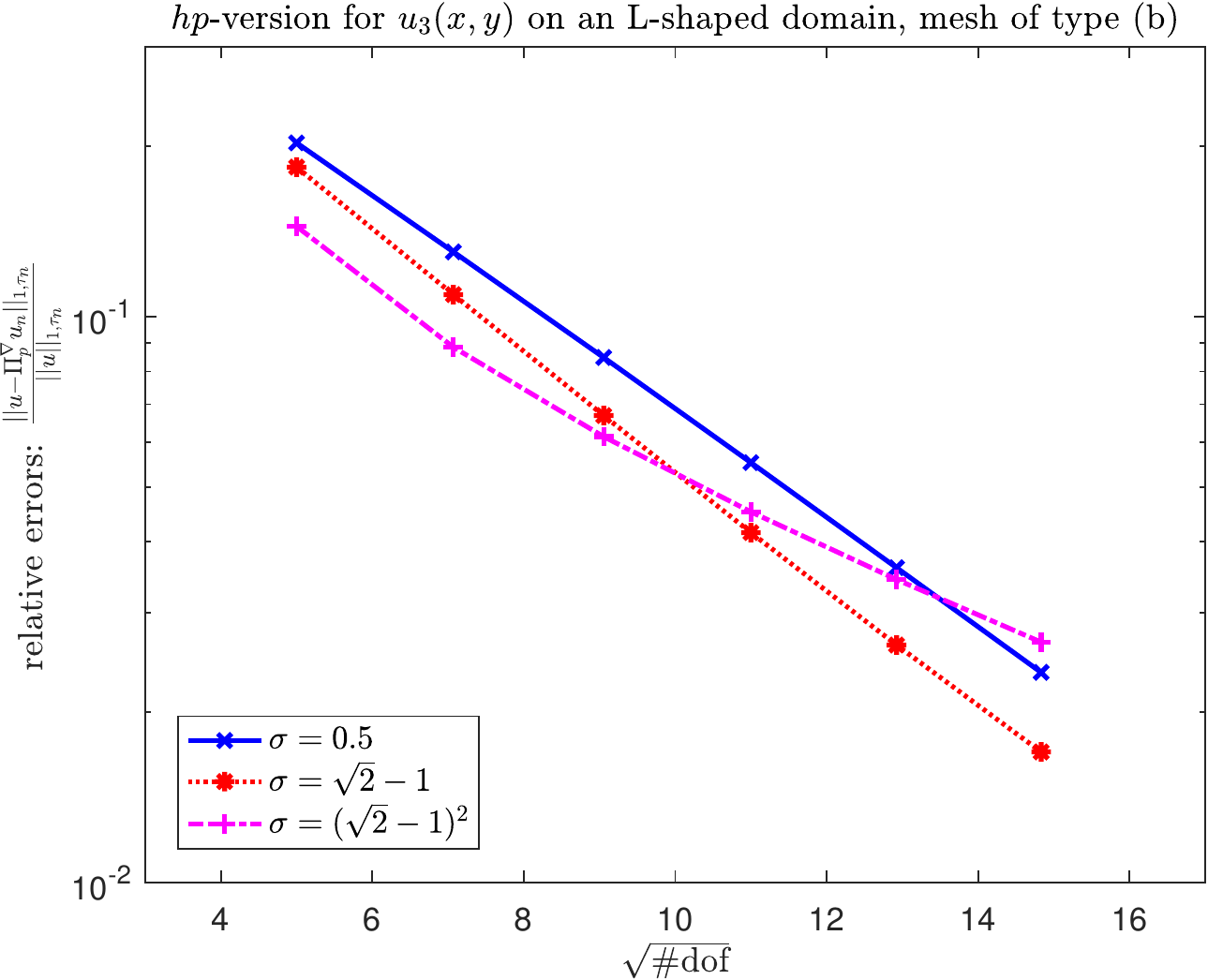}
\end{minipage}
\hfill
\begin{minipage}{0.325\textwidth} 
\includegraphics[width=\textwidth]{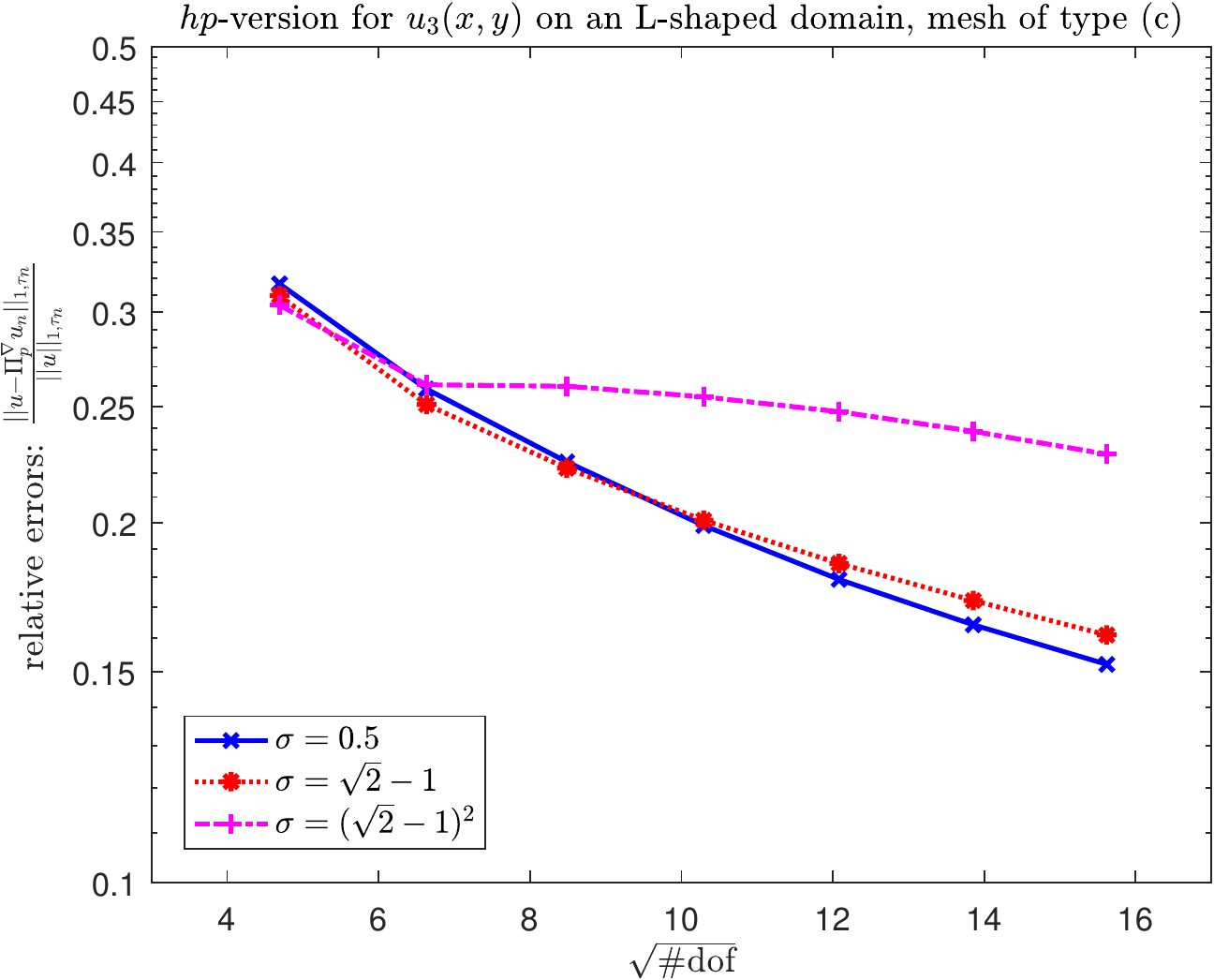}
\end{minipage}
\caption{Convergence of the $hp$-version of the method for the
solution $u_3$ on an L-shaped domain~$\Omega$, for the three sequences of graded meshes represented in Figure~\ref{figure example grading meshes}; relative $H^1$ errors defined in \eqref{computable errors}.
The grading parameter $\sigma$ is set to $1/2$, $\sqrt{2}-1$ and $(\sqrt{2}-1)^2$.}
\label{fig:hpversion_u2} 
\end{figure}

We point out that, in the framework of the conforming harmonic VEM~\cite{HarmonicVEM}, a similar behaviour for the mesh of type (c) was observed.
Instead, when employing the $\h\p$-version of the standard
(non-harmonic) VEM~\cite{hpVEMcorner}, the performance is more robust and the decay of the error is always straight exponential.
This suboptimal behaviour might be intrinsic in the use of harmonic
polynomials, or might be due to the choice of the harmonic polynomial basis employed 
in the construction of the method, see Appendix~\ref{section appendix implementation details}.

\section{Conclusions}
%
%
In this paper, we investigated non-conforming harmonic VEM for the approximation of solutions to 2D Dirichlet-Laplace problems,
providing error bounds in terms both of $\h$, the mesh size, and of $\p$, the degree of accuracy of the method.
We gave some hints concerning the extension of the method to the 3D case, where the design of a suitable stabilization is the only missing item.
Numerical tests validating the theoretical convergence results, as well as testing the $\h\p$-version of the method in presence of corner singularities, were presented.

The technology herein introduced can also be seen as an intermediate step towards the construction of non-conforming Trefftz-VE spaces for the approximation of solutions to Helmholtz problems,
which has been recently investigated in \cite{ncTVEM_Helmholtz}.

\section*{Acknowledgements}
The authors have been funded by the Austrian Science Fund (FWF) through the projects P 29197-N32 and F 65.
They are very grateful to the anonymous referees for their valuable and constructive comments, which have contributed to the improvement of the paper.

\begin{appendices}
\section{Details on the implementation} \label{section appendix implementation details}
\label{subsection implementation details}
In this section, we discuss some practical aspects concerning the implementation of the non-conforming harmonic VEM in 2D.
We employ henceforth the notation of \cite{hitchhikersguideVEM}.
It is worth to underline that we present herein only the case with uniform degree of accuracy; the implementation of the $\h\p$ version is dealt with similarly.
As a first step, we begin by fixing the notation for the various bases instrumental for the
construction of the method.\vspace{-0.2truecm}

\paragraph*{Basis of $\mathbb P_{\p-1}(\e)$ for a given $\e \in \mathcal{E}^\E$.} Using the same notation as in \eqref{local dofs},
we denote the basis of $\mathbb P_{\p-1}(\e)$, $\e\in \mathcal E^\E$, by 
$\{ m_{r}^\e \}_{r=0,\ldots,p-1}$.
The choice we make is 
\begin{align} \label{def_malpha_legendre}
m_{r}^\e(\x):=\mathbb L_{r}\left( \phi_\e^{-1}(\x) \right) \quad \forall r=0,\dots,\p-1,
\end{align}
where $\phi_e: [-1,1] \to e$ is the linear transformation mapping the interval $[-1,1]$ to the edge $\e$, and $\mathbb L_{r}$ 
is the Legendre polynomial of degree $r$ 
over $[-1,1]$.
We recall, see e.g. \cite{SchwabpandhpFEM}, for future use the orthogonality property 
\begin{align} \label{orthogonality legendre}
(m_r^e,m_s^e)_{0,e} = 
\frac{h_e}{2} \int_{-1}^{1} \mathbb L_r(t) \mathbb L_s(t) \, \text{d}t = \frac{h_e}{2r+1} \delta_{rs} \quad \forall r,s=0,\dots,p-1,
\end{align}
where $\delta_{rs}$ is the Kronecker delta ($1$ if $r=s$, $0$ otherwise).

\paragraph*{Basis of $\mathbb H_p(\E)$ for a given $\E \in \taun$.}
We denote the basis of the space of harmonic polynomials $\mathbb
H_p(K)$ by 
$\{ q_\alpha^\Delta \}_{\alpha=1,\ldots,\npDelta}$, where $\npDelta:=\dim \mathbb H_p(K) = 2\p+1$.
The choice we make 
for this basis is 
\begin{equation*} 
\begin{split}
&q_1^\Delta(\x) = 1;\\
&q_{2l}^\Delta(\x) = \sum_{k=1, \, k \text{ odd }}^{l} (-1)^{\frac{k-1}{2}} \binom{l}{k} \left( \frac{x-x_\E}{h_K} \right)^{l-k} \left( \frac{y-y_\E}{h_K} \right)^{k} \quad \forall l=1,\dots,\p; \\
&q_{2l+1}^\Delta(\x) = \sum_{k=0, \, k \text{ even }}^{l} (-1)^{\frac{k}{2}} \binom{l}{k} \left( \frac{x-x_\E}{h_K} \right)^{l-k} \left( \frac{y-y_\E}{h_K} \right)^{k} \quad \forall l=1,\dots,\p.\\
\end{split}
\end{equation*}
The fact that this is actually a basis for $\mathbb H_\p(\E)$ is
proven, e.g., in \cite[Theorem 5.24]{axler2013harmonic}.

\paragraph*{Basis for $\VDeltaE$ for a given $\E \in \taun$.} For this
local VE space introduced in \eqref{local VE space}, we employ the
canonical basis 
$\left\{ \varphi_{j,r} \right\}_{j=1,\ldots, \NE \atop r=0,\ldots,\p-1}$
defined though \eqref{definition canonical basis}, where we also
recall that $N_K$ denotes the number of edges of $K$.
\medskip

In the following, we derive the matrix representation of the local discrete bilinear form introduced in \eqref{local discrete bilinear form}.
We begin with the computation of the matrix representation of the
projector $\PiE$ acting from $\VE$ to $\mathbb H_p(\E)$ and defined in \eqref{H1 bulk projector}.
To this purpose, given any basis function 
$\varphi_{j,r} \in \VDeltaE$, $j=1,\dots,N_K$, $r=0,\dots,p-1$, 
we expand $\PiE \varphi_{j,r}$ in terms of basis $\{
q_\alpha^\Delta \}_{\alpha=1,\ldots,\npDelta}$
of $\mathbb H_p(\E)$, i.e.,
\begin{equation} \label{Pinabla expansion}
\PiE \varphi_{j,r} = \sum_{\alpha=1}^{\npDelta} s_\alpha^{(j,r)} q_\alpha^\Delta.
\end{equation}

Using \eqref{H1 bulk projector} and testing \eqref{Pinabla expansion}
with functions $q_\beta^\Delta$, $\beta=1,\dots,\npDelta$, we get 
that the coefficients $s_{\alpha}^{(j,r)}$ can be computed by
solving for $\boldsymbol{s}^{(j,r)}:=[s_1^{(j,r)},\ldots,
s_{\npDelta}^{(j,r)}]^T$ the $\npDelta \times \npDelta$ algebraic linear system
\[
\boldsymbol{G}\boldsymbol{s}^{(j,r)}=\boldsymbol{b}^{(j,r)},
\]
where
\[
\boldsymbol{G}=
\begin{bmatrix}
(q_1^\Delta,1)_{0,\partial \E} & (q_2^\Delta,1)_{0,\partial \E} & \cdots & (q_{n_p^\Delta}^\Delta,1)_{0,\partial \E} \\
0 & (\nabla q_2^\Delta,\nabla q_2^\Delta)_{0,\E} & \cdots & (\nabla q_{\npDelta}^\Delta,\nabla q_2^\Delta)_{0,\E} \\
\vdots & \vdots & \ddots & \vdots \\
0 & (\nabla q_{\npDelta}^\Delta,\nabla q_2^\Delta)_{0,\E} & \cdots & (\nabla q_{\npDelta}^\Delta,\nabla q_{\npDelta}^\Delta)_{0,\E}
\end{bmatrix},
\quad
\boldsymbol{b}^{(j,r)}=
\begin{bmatrix}
(\varphi_{j,r},1)_{0,\partial \E} \\
(\nabla \varphi_{j,r},\nabla q_2^\Delta)_{0,\E} \\
\vdots \\
(\nabla \varphi_{j,r},\nabla q_{\npDelta}^\Delta)_{0,\E} 
\end{bmatrix}.
\]
%
Collecting all the $N_K p$ (column) vectors
$\boldsymbol{b}^{(j,r)}$ in a matrix $\boldsymbol{B}\in \mathbb
R^{\npDelta \times N_K p}$, namely, setting
$\boldsymbol{B}:=[\boldsymbol{b}^{(1,1)},\dots,\boldsymbol{b}^{(N_K,p)}]$,
the matrix representation $\boldPiStar$ of the projector
$\PiE$ acting from $\VDeltaE$ to $\mathbb H_p(\E)$ is given by
\begin{align*}
\boldPiStar 
= \boldsymbol{G}^{-1} \boldsymbol{B} \in \mathbb R^{\npDelta \times N_K p}.
\end{align*}
Subsequently, we define
\begin{align*}
\boldsymbol{D}:=\begin{bmatrix}
\dof_{1,1}(q_1^\Delta) & \cdots & \dof_{1,1}(q_{\npDelta}^\Delta) \\
\vdots & \ddots & \vdots \\
\dof_{N_\E, p}(q_1^\Delta) & \cdots & \dof_{N_\E, p}(q_{\npDelta}^\Delta)
\end{bmatrix} \in \mathbb R^{N_\E p \times \npDelta}.
\end{align*}
Let $\boldPi$ 
be the matrix representation of the operator $\PiE$ seen now as a map from $\VDeltaE$ into $\VDeltaE \supseteq \mathbb H_\p(\E)$.
Then, following \cite{hitchhikersguideVEM}, it is possible to show that
\begin{align*}
\boldPi 
= \boldsymbol{D} \boldsymbol{G}^{-1} \boldsymbol{B} \in \mathbb R^{N_K p \times N_K p}.
\end{align*}
Next, denoting by $\widetilde{\boldsymbol{G}} \in \mathbb R^{\npDelta
  \times \npDelta}$ the matrix coinciding with $\boldsymbol{G}$ apart
from the first row which is set to zero,
the matrix representation of the bilinear form in~\eqref{local discrete bilinear form} is
\begin{equation*} 
(
\boldPiStar 
)^T \, \widetilde{\boldsymbol{G}} \,(
\boldPiStar 
) + (\boldsymbol{I} - 
\boldPi 
)^T \, \boldsymbol{S} \, (\boldsymbol{I} - 
\boldPi 
).
\end{equation*}
Here, $\boldsymbol{S}$ denotes the matrix representation of an
explicit stabilization $S^K(\cdot,\cdot)$. For the stabilization
defined in \eqref{explicit stabilization}, we have 
\begin{align*} 
\boldsymbol{S}((k-1)N_\E + r,(l-1)N_\E+s) &= \sum_{i=1}^{N_\E} \frac{\p}{\h_{e_i}} (\Piei \varphi_{l,s}, \Piei \varphi_{k,r})_{0,\e_i} \\ &\quad \forall k,l=1,\dots,N_\E, 
\forall r,s=0,\dots,p-1.
\end{align*}
By expanding $\Piei \varphi_{l,s}$ and $\Piei \varphi_{k,r}$ in the
basis $\left\{ m_\gamma^{e_i} \right\}_{\gamma=0,\ldots,p-1}$ of $\mathbb P_{\p-1}(\e_i)$, i.e.,
\begin{align} \label{Piei}
\Piei \varphi_{l,s} = \sum_{\gamma=0}^{p-1} t_\gamma^{(l,s),e_i} m_\gamma^{e_i}, \quad \Piei \varphi_{k,r} = \sum_{\zeta=0}^{p-1} t_\zeta^{(k,r),{e_i}} m_\zeta^{e_i}, \quad \forall k,l=1,\dots,N_\E,\, \forall r,s=0,\dots,\p-1,
\end{align}
we can write
\begin{equation*} 
\begin{split}
\boldsymbol{S}((k-1)N_\E+ r,(l-1)N_\E+s) 	
&= \sum_{i=1}^{N_\E} \sum_{\gamma=0}^{p-1} \sum_{\zeta=0}^{p-1} t_\gamma^{(l,s),{e_i}} t_\zeta^{(k,r),{e_i}} \frac{\p}{\h_{e_i}} (m_\gamma^{e_i}, m_\zeta^{e_i})_{0,{e_i}}\\
&\forall k,l=1,\dots,N_\E,\, \forall r,s=0,\dots,\p-1.
\end{split}
\end{equation*}
For the basis defined in \eqref{def_malpha_legendre}, using the
orthogonality of the Legendre polynomials \eqref{orthogonality legendre}, this expression can be simplified leading to a diagonal stability matrix $\textbf{S}$:
\begin{equation*} 
\begin{split}
\boldsymbol{S}((k-1)N_\E+ r,(k-1)N_\E+r)&=
\sum_{i=1}^{N_\E} \sum_{\zeta=0}^{p-1}
\frac{\p}{2r+1} (t_\zeta^{(k,r),{e_i}})^2 \\
&\forall k=1,\dots,N_\E,\, \forall r=0,\dots,\p-1.
\end{split}
\end{equation*}
For fixed $i,k \in \{1,\dots,N_\E \}$ and $r \in \{0,\dots,\p-1\}$, the coefficients $t_\zeta^{(k,r),e_i}$ are obtained by testing $\Piei \varphi_{k,r}$, defined in \eqref{Piei},
with $m_{\zeta}^{e_i}$, $\zeta=0,\dots,\p-1$, and by taking into account the definition of $\Piei$ in \eqref{L2 edge projector}, the orthogonality relation \eqref{orthogonality legendre}
and the definition of $\varphi_{k,r}$ in \eqref{definition canonical basis}. 
This gives
\[
t_\zeta^{(k,r),e_i} = \frac{2 \zeta+1}{h_{e_i}} (\varphi_{k,r},m_\zeta^{e_i})_{0,e_i}
= (2 \zeta+1) \delta_{i k} \delta_{r \zeta} \quad \forall \zeta=0,\dots,p-1.
\]

The global system of linear equations corresponding to method
\eqref{VEM} is assembled as in the standard non-conforming FEM.
Finally, one imposes in a non-conforming fashion the Dirichlet boundary datum $\g$ by
\begin{align*}
\int_\e \un \qpmue \, \ds = \int_\e \g \qpmue \, \ds \quad \forall \qpmue \in \mathbb P_{\p-1}(\e),
\end{align*}
where, in practice, $g$ is replaced by $g_p$, see Remark \ref{remark how to deal with Dirichlet boundary conditions}.
\end{appendices}

{\footnotesize
\bibliography{bibliogr}
}
\bibliographystyle{plain}

\end{document}